\theoremstyle{plain}
\newtheorem{theorem}{Theorem}[section]
\newtheorem{corollary}[theorem]{Corollary}
\newtheorem{lemma}[theorem]{Lemma}
\newtheorem{proposition}[theorem]{Proposition}
\newtheorem{definition}[theorem]{Definition}
\newtheorem*{definition*}{Definition}
\theoremstyle{remark}
\newtheorem{remark}[theorem]{Remark}
\newtheorem{example}[theorem]{Example}
\newtheorem*{remark*}{Remark}
\newtheorem*{example*}{Example}
\newtheorem*{notation*}{Notation}
\numberwithin{equation}{section}
\def\N{{\mathbb N}}
\def\Z{{\mathbb Z}}
\def\R{{\mathbb R}}
\def\W{{\mathcal W}}
\def\norma #1{\left\lVert #1 \right\rVert}
\def\nm #1{ \left\langle #1 \right\rangle}
\def\P{{\mathscr{P}}}
\def\ev{{\rm ev}}
\def\E{{\mathbb E}}
\def\F{{\mathscr F}}
\def\B{{\mathscr B}}
\newcommand{\cP}{\mathbb{P}}
\def\cE{{\mathscr E}}
\def\Y{{\mathcal Y}}
\def\T{{\mathcal T}}
\def\y{{\boldsymbol y}}
\def\v{{\boldsymbol v}}
\def\bT{{\boldsymbol \T}}
\def\bB{{\boldsymbol B}}
\def\X{{\boldsymbol X}}
\def\by{{\boldsymbol Y}}
\def\blambda{{\boldsymbol{\lambda}}}
\newcommand{\hol}[1]{\"{#1}}
\def\w2{\stackrel{2}{\rightharpoonup}}
\def\strong2{\stackrel{2}{\rightarrow}}
\definecolor{viola}{rgb}{0.3,0,0.7}
\definecolor{ciclamino}{rgb}{0.5,0,0.5}
\definecolor{rosso}{rgb}{0.8,0,0}
\newcommand{\beq}{\begin{equation}}
\newcommand{\eeq}{\end{equation}}
\newcommand{\bal}{\begin{aligned}}
\newcommand{\eal}{\end{aligned}}
\newcommand{\ben}{\begin{enumerate}}
\newcommand{\beni} {\begin{enumerate}[(i)]}
\newcommand{\een}{\end{enumerate}}
\newcommand{\bit}{\begin{itemize}}
\newcommand{\eit}{\end{itemize}}
\newcommand{\beqw}{\begin{equation*}}
\newcommand{\eeqw}{\end{equation*}}
\newcommand{\bex}{\begin{example}}
\newcommand{\eex}{\end{example}}
\newcommand{\bre}{\begin{example}}
\newcommand{\ere}{\end{example}}
\newcommand{\bma}{\begin{bmatrix}}
\newcommand{\ema}{\end{bmatrix}}
\renewcommand{\hat}{\widehat}
\newcommand{\cY}{\mathcal{Y}}
\newcommand{\de}{\mathrm{d}}
\newcommand{\M}{\mathcal{M}}
\renewcommand{\tilde}{\widetilde}
\title[A large multi-agent system with noise both in position and control]{
A large multi-agent system with noise both in position and control}
\author[G. D'Onofrio]{Giuseppe D'Onofrio}
\address[Giuseppe D'Onofrio]{Dipartimento di Scienze Matematiche ``G.~L.~Lagrange'',
Politecnico di Torino, Corso Duca degli Abruzzi 24,
10129 Torino, Italy.}
\email{giuseppe.donofrio@polito.it}
\author[A. Melchor Hernandez]{Anderson Melchor Hernandez}
\address[A. Melchor Hernandez]{Dipartimento di Matematica,
Università di Bologna, Via Zamboni 33,
40126, Bologna, Italy.}
\email{anderson.melchor@unibo.it}
\date{\today}
\keywords{Mean-field limit, Wasserstein distance, well-posedness of stochastic differential equations, Eulerian and Lagrangian solutions, neuronal modeling, random synaptic weights}
\begin{document}
\subjclass[2020]{60B10 
60H10, 
93E03  	
49N80, 
60J70  
}

\begin{abstract}
In this work, we consider a multi-population system where the dynamics of each agent evolve according to a system of stochastic differential equations in 
a general functional setup, determined by the global state of the system. Each agent is associated with a probability measure, that assigns the label accounting for the population to which the agent belongs.
We do not assume any prior knowledge of the label of a single agent, and we allow that it can change as a consequence of the interaction among the agents.
Furthermore, the system is affected by noise both in the agent's position and labels.
First, we study the well-posedness of such a system and then a mean-field limit, as the number of agents diverges, is investigated together with the analysis of the properties of the limit distribution both with  Eulerian and Lagrangian perspectives.
As an application, we consider a large network of interacting neurons with random synaptic weights, introducing resets in the dynamics.
\end{abstract}

\maketitle

\tableofcontents

\section{Introduction}
Mean-field analysis stands as a formidable mathematical tool employed to investigate large-scale systems characterized by a multitude of interacting components. These components may take the form of particles, agents, or individuals, and the methodology behind mean-field analysis allows the study of emergent behaviors at a collective level. Over the years, mean-field analysis has evolved into a versatile tool, finding applications across a diverse range of scientific disciplines. In physics, it has been instrumental in understanding phase transitions and collective phenomena in self-driven particle systems, as exemplified by the work of Vicsek et al. \cite{MR3363421}. In the domain of economics, mean-field game theory, as developed by Lasry and Lions \cite{MR2295621}, provides a framework for analyzing strategic interactions among a large number of agents. Moreover, the insights garnered from mean-field analysis have proven invaluable in the field of neuroscience, enabling the study of emergent properties arising from the complex interactions of a large number of neurons (see for instance \cite{delarue2015particle, duchet2023mean, dumont2020mean, robert2016dynamics, treves} just to name a few). 
More recently, these tools are being used in the context of machine learning and artificial neural networks (\cite{han2019mean, sirignano2022mean}). In economics, it is customary to refer to mean field games, where the term denotes the analysis of the average behavior of a large number of players or agents. The evolution of such models is typically studied using the so-called Hamilton–Jacobi–Bellman equations, often coupled with a Fokker–Planck type equation \cite{cardaliaguet2017learning,pham2018bellman}.
In all these models, the interacting agents or individuals are assumed to belong to different species without explicit spatial dependency. In most cases, this assumption arises due to the low abundances of agents or individuals, which can disrupt the convergence towards a limit model or introduce stochastic properties in the final model, as discussed in \cite[Remark 2.7]{Lea2023}. 

In this work, we consider a stochastic population model where an agent located at position $x\in \R^{d}$ is associated with a probability measure $\lambda$ supported in some compact metric space $(U,d_{U})$. Usually, we represent $U$ as a set of labels accounting for the population to which the agent belongs. In particular, to describe the time evolution of the system, we consider for each agent $i$ a stochastic differential equation (SDE) of the following form:
 
\begin{align}\label{introeq1}
    \de X^{i}(t)=v_{\Lambda_{t}^{N}}(X^{i}(t),\lambda^{i}(t))\de t+ \sqrt{2\sigma}\de B^{i}(t)
\end{align}
where the position $X^{i}$ at time $t$  is driven by a velocity field $v_{\Lambda_{t}^{N}}$, $t\in [0,T]$, where $\Lambda_{t}^{N}$ is the empirical measure $\frac{1}{N}\sum_{i=1}^{N}\delta_{(X^{i}(t),\lambda^{i}(t))}$ and $\lambda^{i}$ is the probability distribution of labels for the agent $i$. Furthermore, the dynamics of $\lambda^{i}$  are given by the differential equation 
\begin{align}\label{introeq2}
\de\lambda^{i}(t)=\T_{\Lambda_{t}^{N}}(X^{i}(t),\lambda^{i}(t))\de t,    
\end{align}
where $\T_{\Lambda_{t}^{N}}$ is a driving operator (the precise setting of the involved functional spaces is given in Section \ref{section2}). The term $\lambda^i$ expresses the probability that the agent $i$, at a certain moment in time, has a certain label, i.e. belongs to a subset of $U$. Indeed, we do not assume any prior knowledge of the label of a single agent, and we allow that it can change as a consequence of the interaction among the agents, like in \cite{MS2020}. Recently, in \cite{almi2023opt}, the authors studied a model similar to \eqref{introeq1}. However, in their study, $\lambda^{i}(t)$ is given as the solution of non-local continuity equation. In such a situation, the pair $({\rm law}(X^{i}(t)), \lambda^{i}(t))$ represents a system of agents divided into two categories. The probability law of $X^{i}(t)$ represents an uncontrolled population called followers, while $\lambda^{i}(t)$ represents the controlled population called leaders. It is important to notice that in our case, the effect of the noise in the variable $X^{i}(t)$ affects the behavior of $\lambda^{i}(t)$ by means of the velocity field $\T$, and thus we need to study the well-posedness of the system given by \eqref{introeq1}-\eqref{introeq2}. For a similar model without diffusion, we refer to \cite{fcarlo2019}.
The idea of a multi-agent model, where individuals are equipped with strategies and influenced by diffusive effects, was previously explored in  \cite{baldi2023buona} (see also \cite{my_unpublished_work}) as an extension of \cite{AFMS}. Here, we develop the study more in the spirit of \cite{MS2020}.
In model \eqref{introeq1}-\eqref{introeq2}, we consider the sequence $(B^{i}(t))_{i\in \N}$, $t\in [0,T]$, as a set of independent Brownian motions. The characteristic feature of this model is that, in the limit, we obtain a continuity equation where a diffusive part is developed. Here, we assume $\sigma>0$, as the case $\sigma=0$ has already been studied in some previous works, such as \cite{AFMS,Ascione-Castorina-Solombrino,MS2020}. Furthermore, for the sake of generality, we are mainly concerned with local regularity properties of both $v_{\Lambda_{t}^{N}}, \T_{\Lambda_{t}^{N}}$, that is, both fields do not admit a global Lipschitz constant, and thus we need to enforce our hypotheses by considering a further sublinearity property of the fields in the involved arguments. 

In the mean-field limit, we study also the time-regularity of the limit of the sequence of the empirical measures  as a solution of an evolution equation. In doing that, we make use of the concept of Eulerian and Lagrangian solutions.

The model \eqref{introeq1}-\eqref{introeq2} straightforwardly can be used for various applications just reinterpreting the role of the agents and the controls (labels). 
As an applications of the previous model
we consider a system of interacting neurons whose synaptic weights, that describe the influence that a neuron  has onto the others, follow a certain distribution in the spirit of neuronal networks with random synaptic weights \cite{delarue2019, faugeras2020asymptotic, faugeras2009constructive}.
In this context, that belongs to the class of noisy integrate-and-fire models, the stochastic process $X^{i}$ describes the time evolution of the membrane potential of a neuron receiving the inputs from the other neurons in the network it is embedded in. These contributions are summarized in the drift part of \eqref{introeq1} according to the velocity field $v_{\Lambda_{s}^{N}}$ until the membrane potential reaches a physiological value $X_F$ triggering a spike.
As a result of this spike, which is assumed to occur instantaneously, the membrane potential of all the other connected neurons receives a contribution. Afterward, the dynamics of the spiking neuron restart from the resting state $X_R$.
The strength of the interaction/contribution between neurons and its evolution is described through a general operator $\T_{\Lambda_{t}^{N}}$ that 
is perturbed by a random effect of the environment, being it dependent on $X^{i}$.

It is important to note that  model \eqref{introeq1} assumes that the global dynamics of $X^{i}$ does not exhibit resets. Therefore, for neuronal modeling, here we include the feature of the spike generation; that is, we consider the system

\begin{align}\label{intro:synap1}
\begin{dcases}
&X^{i}(t)=X_{0}^{i}+ \int_{0}^{t}v_{\Lambda_{s}^{N}}(X^{i}(s),\lambda^{i}(s))\de s+ \sqrt{2\sigma}B_{t}^{i},  \hskip 0,1cm \text{if $X^{i}(t^{-})<X_{F}$,}\\
&X^{i}(t)=X_{R}, \phantom{+\int_{0}^{t}v_{\Lambda_{s}^{N}}(X^{i}(s),\lambda^{i}(s))\de s+ \sqrt{2\sigma}B_{t}^{i}}\hskip 0,2cm \text{if $X^{i}(t^-)=X_{F}$,}\\
&\lambda^{i}(t)=\lambda_{0}^{i}+\int_{0}^{t}\T_{\Lambda_{s}^{N}}(X^{i}(s),\lambda^{i}(s))\de s,
\end{dcases}
\end{align}
where $X_F$ is an additional parameter describing the spiking threshold and $X_R$ is the reset value (resting potential).

Here, we are particularly interested in the following case. Let us consider a  measurable function $\alpha:U\times \Y\rightarrow \R$, and define $\T:\Y\times \P_{1}(\Y) \rightarrow \F(U)$ as follows:
\begin{align*}
\begin{aligned}
&\T(y,\Psi)\coloneqq \int_{\Y}\alpha(\cdot,z)\de \Psi(z), \hskip 0,1cm \text{ such that} \hskip 0,1cm \int_{U}\int_{\Y}\alpha(u,z)\de \Psi(z)\de\mu(u)=0,\\
&\mu(u)=\sum_{k\geq 1}\beta_{k}\delta_{u_{k}}, \hskip0,1cm u_{k}\in U, \hskip 0,1cm \beta_{k}\in \R,
\end{aligned}
\end{align*}
where $\mu\in \M(U)$ defines a signed measure. Furthermore, we consider a Lipschitz velocity field $b: \R^{d}\rightarrow \R^{d}$, and set $v_{\T}:\Y\rightarrow \R^{d}$, $(x,\lambda)\mapsto b(x)$. 
The functional form of $\T$
 is of great generality and contains, as special cases, the random synaptic weights seen as constants, Bernoulli or Gaussian random variables  considered in previous works \cite{de2015hydrodynamic, delarue2019, eva_toymodel, faugeras2009constructive}.

Moreover, since weight distributions in real cortex have been observed to be broad, neurons can belong to different subgroups with different behaviors. The variance of the temporal fluctuations of the input currents may be large enough to endow the network
with its own source of variability (see \cite{la2021mean} for an exhaustive review). Therefore, in Section \ref{sec:evol1}, we consider a heterogeneous population of neurons. In mathematical terms, this translates in an additional source of noise in equation \eqref{introeq2}, namely
$$\lambda^{i}(t)=\lambda_{0}^{i}+\int_{0}^{t}\T_{\Lambda_{s}^{N}}\left(X^{i}(s),\lambda^{i}(s)+ R(s)\right)\de s,$$
with $R(t)\coloneqq \sum_{h\in \N}a_{h}W_{h}(t)e_{h}$,
where $(W_{h}(t))_{h\in \N}$ is a sequence of independent Brownian motions, $(a_{h})_{h\in\N}$  is a sequence of non-negative numbers such that $\sum_{h}a_{h}^{2}<+\infty$, and $(e_{h})_{h}$ is a proper sequence of signed measures.

For all these models, we prove the well-posedness of the coupled SDEs and discuss the link between Eulerian and Lagrangian solutions of \eqref{introeq1}-\eqref{introeq2}. Finally, for \eqref{intro:synap1} we show the qualitative behavior in the one-dimensional case using simulations.

Classical models of large neuronal network can be studied also using a diffusion approximation, which, however, requires that the frequency of the inputs is high, but their amplitude is small (see \cite{tamborrino2021shot}).
Mean-field theory is a device to analyze the collective behavior of such a dynamical system
without these requests as long as the number of interacting particles is high. The
theory allows to reduce the behavior of
the system to the properties of a handful
of parameters. In neural circuits, these
parameters are typically the firing rates. In this context, we are often interested
in the properties averaged across the distribution
of possible weights  (\cite{ascione2020optimal,baxendale2011sustained,ditlevsen2013morris}) and in the limit of infinite
network size \cite{corm, locherbach2024}. Some important properties, such as
the existence of a sharp phase transition or synchronization, are only
obtainable in this limit.
However, fluctuations are sometimes so high that care must be taken in the averaging process, as in the case of multiplicative noise; see \cite{ascione2023deterministic}.
For this reason we introduce sources of noise in the network itself. This goes also in the direction of addressing the modeling of synaptic plasticity, where the synaptic weight  itself is a
random process evolving in time, as suggested in \cite{locherbach2017spiking}.

The paper is organized as follows: in Section \ref{section2} we introduce our notation and we recall some preliminary  results. 
We describe the multi-agent system described by coupled SDEs with noise both in position and labels along with the relative assumptions and a description of the spaces in which we are working. 
The main results on the well-posedness of the considered system, along with comments on the properties of the mean-field limit, are stated and proven in Section \ref{section3}. In particular, we develop the mean-field description using both the Eulerian and Lagrangian approaches to study the time regularity of the limit distribution. 
Finally, in Section \ref{section4}, we consider a large network of interacting neurons whose activity is described by the aforementioned system, both for homogeneous and heterogeneous populations of neurons. This model exhibits jumps, and the corresponding well-posedness results are provided.

\section{The model}\label{section2}
In this section, we will provide a detailed overview of the model, including a comprehensive description of all the tools used and the key assumptions it is based on.
 In details, subsection \ref{subsec1}  introduces some preliminaries and notation, while in subsection \ref{subsec2} we provide the functional spaces needed for our purpose, and in subsection \ref{subsec3} our assumptions. Furthermore, in subsection \ref{subsec4}, we recall the notion of Eulerian and Lagrangian solutions. Subsection \ref{subsec5} is dedicated to the particle system and to the notion of strong solution for SDEs.

\subsection{Preliminaries and notation}\label{subsec1}

Let $(X,d_{X})$ be a metric space, we denote by $\M(X)$ the space of signed Borel measures with finite total variation, by $\M_{+}(X)$ and $\P(X)$ the convex subsets of nonnegative measures and probability measures, respectively. For $\sigma\in \M(X)$, $\vert \sigma\vert\in \M_{+}(X)$ denotes the total variation measure of $\sigma$. We shall also use the notation $\M_{0}(X)$ for the subset of measures with zero mean. Given a metric space $(X,d_{X})$, we consider the Lipschitz space 
\begin{align*}
{\rm Lip}(X,d_{X}):=\left\{\phi: X\rightarrow \R\vert \exists\hskip 0,1cm L>0: \forall x,y\in X\hskip 0,1cm \vert \phi(x)-\phi(y)\vert \leq L d_{X}(x,y)\right\}.
\end{align*}
For a continuous function $\phi\in C(X)$ we denote by
\begin{align*}
{\rm Lip}(\phi):=\sup_{\stackrel{x,y\in Y}{x\neq y}}\frac{\vert \phi(x)-\phi(y) \vert}{d_{X}(x,y)}   \end{align*}
its Lipschitz constant. In the next, we endow ${\rm Lip}(X,d_{X})$ with the Lipschitz norm $\norma{\phi}_{{\rm Lip}}:=\sup_{x\in X}\vert \phi(x)\vert+ {\rm Lip}(\phi)$. 
Given a measure $\mu\in \M_{+}(X)$, $(E,d_{E})$ another metric space, and $f:X\rightarrow E$ a $\mu$-measurable function, we shall denote by $f_{\#}\in \M_{+}(E)$ the push-forward measure, having the same mass as $\mu$ and defined by $f_{\#}\mu(B)=\mu(f^{-1}(B))$ for any Borel set $B\subset E$. We often use the change of variable formula

\begin{align*}
\int_{E}g {\rm d}f_{\#}\mu= \int_{X}g\circ f {\rm d}\mu
\end{align*}
whenever either one of the integrals makes sense. In a complete and separable metric space $(X,d_{X})$, we shall use the Kantorovich-Rubinstein distance $\W_{1}(\cdot,\cdot)$ in the class $\P(X)$. For $\mu_{1},\mu_{2}\in \M_{1}(X)$, the $1$-Wasserstein distance $\W_{1}(\mu_{1},\mu_{2})$ is defined by 

\begin{align*}
\W_{1}(\mu_{1},\mu_{2}):=\inf\left\{\left. \int_{X\times X}d_{X}(x_{1},x_{2}) \gamma(dx_{1},dx_{2})\right\vert 
 \gamma\in \Gamma(\mu_{1},\mu_{2})\right\}   
\end{align*}
where $\Gamma(\mu_{1},\mu_{2})$ is the set of admissible coupling between $\mu_{1}$ and $\mu_{2}$. It is worth to recall that due to Kantorovich duality, one can also consider the following definition 

\begin{align*}
\W_{1}(\mu_{1},\mu_{2}):=\sup\left\{ \int_{X}\phi{\rm d}(\mu_{1}-\mu_{2})\Big| \hskip 0,1cm \phi \in {\rm Lip}(X,d_{X}), {\rm Lip}(\phi)\leq 1 \right\}.  
\end{align*}

Notice that $\W_{1}(\mu_{1},\nu_{1})$ is finite if $\mu_{1},\nu_{1}$ belong to the space

\begin{align*}
\P_{1}(X):=\left\{ \mu\in \P(X)\Big| \int_{X}d_{X}(x,\overline{x}){\rm d}\mu(x)<\infty \hskip 0,2cm \text{for some $\overline{x}\in X$}\right\}.
\end{align*}

Note that $(\P_{1}(X),\W_{1})$  is complete if $(X,d_{X})$ is complete. Moreover, by \cite[Theorem 2.2.1]{panaretos2020} the following holds true: a sequence $(\mu_{n})\subset \P_{1}(X)$ converges to $\mu\in \P_{1}(X)$ with respect to the Wasserstein distance $\W_{1}$ if and only if,
for all $\phi\in {\rm Lip}(X,d_{X})$,

\begin{align*}
&\int_{X}\phi {\rm d}\mu_{n} \overset{n\rightarrow \infty}{\longrightarrow}  \int_{X} \phi{\rm d}\mu, \qquad  \int_{X}{\rm d}_{X}(\cdot,\bar{x}){\rm d}\mu_{n}\overset{n\rightarrow \infty}{\longrightarrow}  \int_{X}d_{X}(\cdot,\bar{x}){\rm d}\mu.  
\end{align*}
 Suppose that $E$ is a Banach space, the notation $C_{b}^{1}(E)$ will be used to denote the subspace of $C_{b}(E)$ of functions having bounded continuous Fréchet differential at every point. Hence, we make use of the notation $D\varphi$ to denote such a  Fréchet differential.  Let $T>0$, in the case $\varphi:[0,T]\times E\rightarrow \R$, we make use of the symbol $\frac{\de}{\de\,t}$ to denote the partial derivative with respect to the variable $t\in[0,T]$, and $D$ will  only stand for the derivative with respect to the variables in $E$.

\subsection{The framework}\label{subsec2}

In what follows, we are interested in studying SDEs on $\Y$, where $\Y\coloneqq \R^{d}\times \P(U)$. The state space of our system is given by pairs $(x,\lambda)=y\in\Y$. Here $x\in\R^{d}$ denotes the spatial component of an agent, whereas the element $\lambda\in \P(U)$ denotes a probability distribution over the space $U$, which we assume to be a compact metric space. It can be interpreted as a space of strategies \cite{AFMS,MS2020}. 

We now consider a compact metric space $(U,d_{U})$, and consider the set $\F(U)$ defined as

 \begin{align*}
 \F(U)\coloneqq\overline{{\rm span}(\P(U))}^{{\norma{\cdot}}_{{\rm BL}}}   
 \end{align*}
as the closure in the dual space $({\rm Lip}(U,d_{U}))^{\ast}$ with respect to the dual norm

\begin{align*}
{\norma{\ell}}_{{\rm BL}}:=\sup\left\{\nm{\ell,\phi}:\phi\in {\rm Lip}(U, d_{U}), \norma{\phi}_{{\rm Lip}}\leq 1\right\}.
\end{align*}
The space $\F(U)$ is the so-called  Arens-Eells space introduced in \cite{Arens-Eells}. This is 
 a separable Banach space containing $\M(U)$. 
Furthermore,  for a measure $\nu\in \M_{0}$, the $\norma{\cdot}_{{\rm BL}}$-norm is equivalent to the norm induced by the dual formulation of the $1$-Wasserstein distance.

On the other hand, note that $\M(U)$, when endowed with the  total variation norm

\begin{align*}
    \norma{\sigma}_{{\rm TV}}:=\sup\left\{\int_{U}\phi d\sigma: \phi\in C(U), \vert \phi\vert\leq 1\right\}
\end{align*}
has the structure of Banach space, isometrically isomorphic to the dual of $C(U)$. We will also use the representation formulas

\begin{align*}
\norma{\sigma}_{{\rm TV}}=\vert \sigma\vert(U)=\sup\left\{\int_{U}\phi d\sigma: \phi\in {\rm B}_{b}(U), \vert \phi\vert\leq 1\right\}
\end{align*}
where $B_{b}(U)$ denotes the class of bounded Borel functions $\phi: U\rightarrow \R$. Furthermore,  we have by Kantorovich duality that

\begin{align*}
   \norma{\mu_{1}-\mu_{2}}_{{\rm BL}}\leq \W_{1}(\mu_{1},\mu_{2})\leq (1+ D_{U})\norma{\mu_{1}-\mu_{2}}_{{\rm BL}}.
\end{align*}

In the next we need to consider the space $\cE\coloneqq\R^{d}\times \F(U)$ endowed with the norm $\norma{y}_{\cE}=\norma{(x,\sigma)}_{\cE}:=\vert x\vert + \norma{\sigma}_{{\rm BL}}$, which is a separable Banach space. For $y\in \Y$ and $\Psi\in \P_{1}(\Y)$, we consider the vector field $b_{\Psi}:\Y\rightarrow \cE$ defined as

\begin{align}\label{vectorf}
b_{\Psi}(y)\coloneqq 
\begin{pmatrix}
&\hskip -0,3cm v_{\Psi}(y)\\
&\hskip -0,3cm \T_{\Psi}(y)
\end{pmatrix}.
\end{align}
The first component of $b_{\Psi}$ is a velocity field in $\R^{d}$ determined by the global state of the system $\Psi$; the second component is expressed in terms of an operator $\T_{\Psi}:\Y\rightarrow \cE$ which sees the location of agents around $\Psi$.

 For a given $R>0$, we denote by $B_{R}$ the closed ball of radius $R$ in $\R^{d}$ and by $B_{R}^{\Y}$ the ball of radius $R$ in $\Y$, that is, $B_{R}^{\Y}\coloneqq \{y\in\Y:\norma{y}_{\Y}\leq R\}$. 
\begin{remark}\label{compacty}
Since $U$ is assumed compact, by \cite[Corollary 2.2.5]{panaretos2020} $\P(U)$ is compact and thus $\Y$ is a locally compact space. Therefore $B_{R}^{\Y}$ is a compact set. Since $\Y\subset \cE$, for any $\Psi\in \P(\Y)$ we may define
\end{remark}

\vspace{-8mm}
\begin{align*}
m_{1}(\Psi)\coloneqq \int_{\Y}\norma{y}_{\cE}\de \Psi.
\end{align*}
Hence, 
\begin{align*}
\P_{1}(\Y)=\{\Psi\in \P(\Y): m_{1}(\Psi)<+\infty\}.
\end{align*}

\subsection{Assumptions}\label{subsec3}

We assume that the velocity field $v_{\Psi}:\Y\rightarrow \R^{d}$ satisfies the following structural assumptions:

\begin{enumerate}[({A}1)]
\item 	\label{A1} For every $R>0$, for every $\Psi\in \P(B_{R}^{\Y})$, $v_{\Psi}\in {\rm Lip}(B_{R}^{\Y};\R^{d})$ uniformly with respect to $\Psi$, that is, there exists a positive constant $L_{v,R}$ such that
\begin{align*}
\vert v_{\Psi}(y^{1})-v_{\Psi}(y^{2})\vert \leq L_{v,R}\norma{y^{1}-y^{2}}_{\cE};
\end{align*}
\item for every $R>0$, for every $\Psi\in\P(B_{R}^{\Y})$, there exists a positive constant $L_{v,R}$ such that for every $y\in B_{R}^{\Y}$ and for every $\Psi^{1},\Psi^{2}\in \P(B_{R}^{\Y})$
\begin{align*}
\vert v_{\Psi^{1}}(y)-v_{\Psi^{2}}(y)\vert \leq L_{v,R}\W_{1}(\Psi^{1},\Psi^{2});
\end{align*}
\item\label{A3} there exists $M_{v}>0$ such that for every $y\in\Y$ and for every $\Psi\in\P_{1}(\Y)$ there holds 
\begin{align*}
\vert v_{\Psi}(y)\vert\leq M_{v}\left(1+\norma{y}_{\cE}+ m_{1}(\Psi)\right). 
\end{align*}
\end{enumerate}
The assumptions on the operator $\T$ are the following. For $\Psi\in \P_{1}(\Y)$, the operator $\T_{\Psi}:\Y\rightarrow \F(U)$ is  such that

\begin{enumerate}[({B}1)]
\item\label{B1} for every $(y,\Psi)\in \Y\times P_{1}(\Y)$, constants belong to the kernel of $\T_{\Psi}(y)$, that is
\begin{align*}
\nm{\T_{\Psi}(y),1}_{\F(U),{\rm Lip}(U)}=0;
\end{align*}
\item\label{B2}for every $(y,\Psi)\in \Y\times \P_{1}(\Y)$, there exists a positive constant $M_{\T}$ such that
\begin{align*}
\norma{\T_{\Psi}(y)}_{{\rm BL}}\leq M_{\T}(1+\vert x\vert+m_{1}(\Psi)), \hskip0,1cm\text{where $y=(x,\lambda)$;}
\end{align*}
\item  for every $R>0$ there exists  a positive constant $L_{\T,R}$ such that for every $(y^{1},\Psi^{1}),(y^{2},\Psi^{2})\in B_{R}^{\Y}\times \P(B_{R}^{\Y})$,
\begin{align*}
\norma{\T_{\Psi^{1}}(y^{1})-\T_{\Psi^{2}}(y^{2})}_{{\rm BL}}\leq L_{\T,R}\left(\norma{y^{1}-y^{2}}_{\cE}+ \W_{1}(\Psi^{1},\Psi^{2})\right);
\end{align*}
\item\label{B4} for every $R>0$ there exists $\delta_{R}>0$ such that for every $(y,\Psi)\in B_{R}^{\Y}\times \P_{1}(\Y)$ we have

\begin{align*}
\T_{\Psi}(y)+\delta_{R}\lambda\geq 0,  \hskip0,1cm\text{where $y=(x,\lambda)$}.
\end{align*}
\end{enumerate}

\begin{proposition}[{\cite[Proposition 3.1]{MS2020}}]\label{morasolo1}
For $y\in\Y$ and $\Psi\in\P_{1}(\Y)$, define $b_{\Psi}(y)$ as in \eqref{vectorf}. Assume that $v_{\Psi}:\Y\rightarrow\R^{d}$ satisfies (A\ref{A1})-(A\ref{A3}) and $\T_{\Psi}:\Y\rightarrow \F(U)$ satisfies (B\ref{B1})-(B\ref{B4}). Then the following hold true:
\begin{enumerate}
\item[(i)] for every $R>0$, for every $\Psi\in\P(B_{R}^{\Y}),$ and for every $y^{1},y^{2}\in B_{R}^{\Y}$, there exists a positive constant $L_{R}$ such that
\begin{align*}
\norma{b_{\Psi}(y^{1})-b_{\Psi}(y^{2})}_{\cE}\leq L_{R}\norma{y^{1}-y^{2}}_{\cE};
\end{align*}
\item[(ii)] for every $R>0$; for every $\Psi^{1},\Psi^{2}\in \P(B_{R}^{\Y})$, and for every $y\in B_{R}^{\Y}$, there exists a positive constant $L_{R}$ such that
\begin{align*}
\norma{b_{\Psi^{1}}(y)-b_{\Psi^{2}}(y)}_{\cE}\leq L_{R}\W_{1}(\Psi^{1},\Psi^{2});
\end{align*}
\item[(iii)]for every $R>0$, there exists $\theta>0$ such that for every $y\in B_{R}^{\Y}$ and for every $\Psi\in \P(B_{R}^{\Y})$

\begin{align*}
y+\theta b_{\Psi}(y)\in \Y;
\end{align*}
\item[(iv)]there  exists $M>0$ such that for every $y\in\Y$ and for every $\Psi\in\P_{1}(\Y)$ there holds 

\begin{align*}
\norma{b_{\Psi}(y)}_{\cE}\leq M(1+\norma{y}_{\cE}+ m_{1}(\Psi)).
\end{align*}
\end{enumerate}
\end{proposition}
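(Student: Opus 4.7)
The plan is to verify the four items essentially component by component, exploiting the product structure of $b_{\Psi}(y)=(v_{\Psi}(y),\T_{\Psi}(y))$ together with the fact that the norm on $\cE$ splits as $\|(x,\sigma)\|_{\cE}=|x|+\|\sigma\|_{{\rm BL}}$. Items (i), (ii) and (iv) then reduce to a direct sum of the corresponding hypotheses on $v_{\Psi}$ and $\T_{\Psi}$; the only item that actually requires a small argument is (iii), which combines the kernel condition (B\ref{B1}) with the quasi-positivity estimate (B\ref{B4}).

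For (i), I fix $\Psi\in\P(B_{R}^{\Y})$ and $y^{1},y^{2}\in B_{R}^{\Y}$ and write
\[
\|b_{\Psi}(y^{1})-b_{\Psi}(y^{2})\|_{\cE}=|v_{\Psi}(y^{1})-v_{\Psi}(y^{2})|+\|\T_{\Psi}(y^{1})-\T_{\Psi}(y^{2})\|_{{\rm BL}}.
\]
The first term is bounded by $L_{v,R}\|y^{1}-y^{2}\|_{\cE}$ thanks to (A\ref{A1}), and the second by $L_{\T,R}\|y^{1}-y^{2}\|_{\cE}$ by specializing (B3) with $\Psi^{1}=\Psi^{2}=\Psi$; summing gives (i) with $L_{R}:=L_{v,R}+L_{\T,R}$. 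Item (ii) is completely parallel: with $y^{1}=y^{2}=y$ the same decomposition, together with (A2) and (B3), yields the bound in $\W_{1}(\Psi^{1},\Psi^{2})$. Item (iv) uses (A\ref{A3}) and (B\ref{B2}): since $|x|\le\|y\|_{\cE}$, one has $\|\T_{\Psi}(y)\|_{{\rm BL}}\le M_{\T}(1+\|y\|_{\cE}+m_{1}(\Psi))$, and combining with the analogous estimate for $v_{\Psi}$ gives the sublinear bound with $M:=M_{v}+M_{\T}$.

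The only nontrivial point is (iii). Write $y=(x,\lambda)$, so that
\[
y+\theta b_{\Psi}(y)=\bigl(x+\theta v_{\Psi}(y),\ \lambda+\theta\T_{\Psi}(y)\bigr).
\]
The first component is automatically in $\R^{d}$, so I only need to check that $\lambda+\theta\T_{\Psi}(y)$ lies in $\P(U)$ for sufficiently small $\theta>0$. For the total mass, I test against the constant function $1\in{\rm Lip}(U,d_{U})$: by (B\ref{B1}), $\langle\T_{\Psi}(y),1\rangle=0$, hence $(\lambda+\theta\T_{\Psi}(y))(U)=\lambda(U)=1$. For positivity, (B\ref{B4}) provides $\delta_{R}>0$ with $\T_{\Psi}(y)+\delta_{R}\lambda\ge 0$, so multiplying by $\theta>0$ and rearranging,
\[
\lambda+\theta\T_{\Psi}(y)\ge(1-\theta\delta_{R})\lambda,
\]
which is non-negative whenever $\theta\le 1/\delta_{R}$. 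Choosing for instance $\theta:=1/\delta_{R}$ (or any smaller positive value depending only on $R$) yields (iii). The main, but still mild, obstacle is just the careful interplay between (B\ref{B1}) and (B\ref{B4}) needed to stay inside the simplex $\P(U)$; once this is sorted, the rest of the proposition is purely bookkeeping from the structural assumptions.
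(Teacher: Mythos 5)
Your proof is correct. Note that the paper does not prove this proposition at all --- it imports it verbatim by citation from \cite[Proposition 3.1]{MS2020} --- so there is no in-paper argument to compare against; your component-wise reduction of (i), (ii), (iv) to (A1)--(A3), (B2)--(B3) via the splitting $\norma{(x,\sigma)}_{\cE}=\vert x\vert+\norma{\sigma}_{\rm BL}$, and your treatment of (iii) by combining the zero-mass condition (B1) with the quasi-positivity (B4) to write $\lambda+\theta\T_{\Psi}(y)\geq(1-\theta\delta_{R})\lambda$ for $\theta\leq 1/\delta_{R}$, is exactly the argument these hypotheses are designed for and is the proof given in the cited source.
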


\subsection{Eulerian and Lagrangian solutions}\label{subsec4}
Let us discuss the concepts of Eulerian and Lagrangian solutions in the context of studying the asymptotic behavior of a system like \eqref{introeq1}-\eqref{introeq2} as $N\rightarrow +\infty$. Let us first recall the notion of Eulerian solution. In the next, given a  sufficiently smooth function $\varphi:\Y\rightarrow \R$, and  generic $\Psi\in \P_{1}(\Y)$ we define the operator 

\begin{align}\label{diffop}
{\mathcal L}_{\Psi}\varphi(y)\coloneqq \sigma{\rm Tr}({\rm D}_{yy}\varphi(y))+ \nm{b_{\Psi},{\rm D}_{y}\varphi(y)}  
\end{align}
where $b_{\Psi}$ is defined according to \eqref{vectorf}. In the next, we make the following important assumption about ${\mathcal L}_{\Psi}$. 

\begin{enumerate}
\item[(C$1$)] Let us suppose that there exists $r>0$ such that for each $\Psi\in \P_{1}(B_{r}^{\Y})$, ${\mathcal L}_{\Psi}:{\rm Dom}({\mathcal L}_{\Psi})\subset C_{0}(\Y)\rightarrow C_{0}(\Y)$ is densely defined, a dissipative linear operator such that for some $\eta>0$
\begin{align*}
    {\rm Range}(\,\eta I- {\mathcal L}_{\Psi})= C_{0}(\Y).
\end{align*}
Here, we denote by $C_{0}(\Y)$ the space of continuous real-valued functions on $\Y$ vanishing at infinity. That is, $\varphi\in C_{0}(\Y)$ if it is continuous and for all $\varepsilon>0$ there exists a compact set $K_{\varepsilon}\subset \Y$ such that $\vert \varphi(y)\vert<\varepsilon$ for all $y\notin K_{\varepsilon}$.
\end{enumerate}

\begin{definition}\label{dfn:Eulero}
Let $\Lambda\in C^{0}([0,T];(\P_{1}(\Y),\W_{1}))$ and let $\bar{\Lambda}\in \P(\Y)$ be a given compactly supported initial condition. We say that $\Lambda$ is an Eulerian solution to the parabolic equation

\begin{align}\label{e:parabolic}
\frac{\de}{\de t}\Lambda_{t}+\mathcal L_{\Lambda_{t}}^{\ast}(\Lambda_{t})=0,
\end{align}
 starting from $\bar{\Lambda}$ if and only if for every $\varphi\in C_{b}^{1}([0,T]\times\overline{\Y})$,
\begin{align}\label{phit}
\int_{\Y}\varphi(t,y)\de\Lambda_{t}(y)&-\int_{\Y}\varphi(0,y)\de\bar{\Lambda}(y)
=\int_{0}^{t}\int_{\Y}\left( \frac{\de}{\de t}\varphi(s,y)+ \mathcal L_{\Lambda_{s}}\varphi(s,y)\right)\de\Lambda_{s}(y)\de s,
\end{align}
where $\mathcal L_{\Lambda_{t}}^{\ast}$ denotes the dual operator of $\mathcal L_{\Lambda_{t}}$. Notice that for us $\varphi$ will be always a function which does not depend on $t$, and then its time derivative is not needed in \eqref{phit}. Notice that our operator ${\mathcal L}_{\Psi}$ can be related to an SDE. 
\end{definition}

\begin{remark}\label{itoderivation}
Suppose that we have a generic Brownian motion $(\overline{B}(t))_{t\in[0,T]}$ defined in a complete probability space $(\Omega,\{\mathcal{F}_{t}\}_{t\in [0,T]},\cP)$. Let us consider the equations
\begin{align}\label{eq:central}
\begin{aligned}
&X(t)=\bar{X}_{0}+ \int_{0}^{t} v_{\Lambda}(X(s),\lambda(s))\de s + \sqrt{2\sigma}\bar{B}(t),\\
&\lambda(t)=\bar{\lambda}_{0}+ \int_{0}^{t}\T_{\Lambda}(X(s),\lambda(s))\de s,
\end{aligned}
\end{align}
where $X(0)=\bar{X}_{0}\in L^{2}(\Omega;\R^{d})$, $\lambda_{0}\in L^{2}(\Omega,\P(U))$ are the initial conditions for \eqref{eq:central}. Since $\cE$ is a separable Banach space, thanks to It\hol{o} calculus, we can express ${\mathcal L}_{\Psi}$ in a more specific manner. Indeed, suppose that $\varphi:\cY \rightarrow \R$ is a $C^{2}(\Y)$ function. Then by the It\hol{o} formula for $ \de \varphi(X(t),\lambda(t))$ one has:

 \begin{align*}
  \de \varphi(X(t),\lambda(t))=\nabla_{x}\varphi(X(t),\lambda(t))\de X(t)+ D_{\lambda}\varphi(X(t),\lambda(t))\de\lambda(t)+ \sigma \Delta_{x}\varphi(X(t),\lambda(t))\de t  
 \end{align*}
 where $D_{\lambda}$ denote the Frechét derivative with respect to  the variable $\lambda$, and $(X(t),\lambda(t))$ is the solution of \eqref{eq:central} above. By taking the integral with respect to the time variable, and the expected valued with respect to $\cP$, we obtain that

 \begin{align*}
 \begin{aligned}
 \int_{\Y}\varphi(y)\de \mu_{t}(y)- \int_{\Y}\varphi(y)\de \mu_{0}(y)=
 &\int_{0}^{t}\int_{\Y}\nm{D_{y}\varphi(y),b_{\Psi}(y)}\mu_{s}(y)\de s +\\
 &\hskip 0,5cm+\sigma\int_{0}^{t}\int_{\Y}\Delta_{x}\varphi(y)\de\mu_{s}(s)\de s.
 \end{aligned}
 \end{align*}
 Hence, we obtain 

 \begin{align*}
{\mathcal L}_{\Psi}\varphi(y)=\sigma\Delta_{x}\varphi(y)+\nm{b_{\Psi},D_{y}\varphi(y)}.  
 \end{align*}
Our assumption C1 guarantees the existence of a strongly continuous semigroup of contractions on $C_{0}(\Y)$. In general, this can be proven when the first differential equation of \eqref{eq:central}, has the following structure

\begin{align*}
 &\de X(t)=(AX(t)+v_{\Lambda}(X(t),\lambda(t)))\de t + \sqrt{2\sigma}\de \bar{B}(t),   
\end{align*}
where $A:\R^{d}\rightarrow \R^{d}$ is the infinitesimal generator of a strongly continuous semigroup. However, in our case $A=0$, and thus results such as \cite[Proposition 2.8]{ma2012introduction}, which establish that right processes and strongly continuous semigroups with generator ${\mathcal L}_{\Psi}$ are associated in the sense of Dirichlet forms, cannot be applied. Moreover, results such as \cite[Theorem 7.7]{da2014}, which establish the existence of solutions to \eqref{eq:central}, cannot be applied either.
\end{remark}

Before  recalling the definition of Lagrangian solution, we recall a further important concept.

\begin{definition}
Let  us fix $T>0$, $\bar{y}\in \Y$, and $(\overline{B}(t))_{t\in [0,T]}$ be a $\R^{d}$-valued Brownian motion. The transition map $\overline{F}_{\Lambda}(t,s,\bar{y})$ associated with the equation \eqref{eq:central} above, where  the initial condition is replaced by $(X(s),\lambda(s))=\overline{y}$ is defined through
\begin{align}\label{eq:flow}
\overline{F}_{\Lambda}(t,s,\overline{y})\coloneqq (X(t),\lambda(t)),
\end{align}
where $(X(t),\lambda(t))_{t\in [0,T]}$ is the unique solution to equation \eqref{eq:central} driven by $(\overline{B}(t))_{t\in [0,T]}$.
\end{definition}
We now recall what a Lagrangian solution is for an equation of the form \eqref{e:parabolic}.

\begin{definition}
Suppose that $\Lambda\in C^{0}([0,T];(\P_{1}(\Y),\W_{1}))$ and let $\overline{\Lambda}$ be any given initial condition for \eqref{e:parabolic}. We say that $\Lambda$ is a Lagrangian solution to \eqref{e:parabolic} starting from $\overline{\Lambda}$ if and only it satisfies that
\begin{align*}
\Lambda_{t}=\overline{F}_{\Lambda}(t,0,\cdot)_{\#}\overline{\Lambda} \hskip 0,3cm \text{for every $0\leq t\leq T$, $\cP$-a.s.,}
\end{align*}
where $\overline{F}_{\Lambda}(t,s,y)$ are the transition maps as defined in \eqref{eq:flow}.
\end{definition}
\begin{remark}
Observe that the measure $\Lambda_{t}$ can be seen as the action of the evaluation map $\ev_{t}$ which can be defined as follows. We set for each $t\in [0,T]$ the  evolution map $\ev_{t}:C^{0}([0,T];(\P_{1}(\Y),\W_{1}))\rightarrow \P_{1}(\Y)$ as
\begin{align}\label{evoper}
    \ev_{t}(\Lambda)\coloneqq \Lambda_{t}, \hskip 0,1cm \text{for all $\Lambda\in C^{0}([0,T];(\P_{1}(\Y),\W_{1}))$.}
\end{align}
In general, notice that the definition of the evolution map $\ev_{t}$ can be defined in $C^{0}([0,T];E)$, where $E$ is a generic Banach space, see \cite{AFMS}.
\end{remark}

\subsection{The multi-agent system: strong solutions for SDEs}\label{subsec5}
In this part, we introduce the model and we recall the notion of strong solutions for SDEs. Let us fix $T>0$, $\sigma>0$, and for $N\in\N$ we consider $N$ independent and identically distributed $\R^{d}$-valued Brownian motions $(B_{t}^{i})_{t\in [0,T]}$. In what follows, we consider a particle system of $N$ agents evolving according to

\begin{align}\label{system1}
\begin{dcases}
&X^{i}(t)=X_{0}^{i}+ \int_{0}^{t}v_{\Lambda_{s}^{N}}(X^{i}(s),\lambda^{i}(s))\de s+ \sqrt{2\sigma}B_{t}^{i},\\
&\lambda^{i}(t)=\lambda_{0}^{i}+\int_{0}^{t}\T_{\Lambda_{s}^{N}}(X^{i}(s),\lambda^{i}(s))\de s,
\end{dcases}
\end{align}
where $X^{i}(t)\in\R^{d}$, $\lambda^{i}(t)\in \P(U)$ for each $i=1,\ldots,N$, and 

\begin{align}\label{misura1}
\Lambda_{t}^{N}\coloneqq \frac{1}{N}\sum_{i=1}^{N}\delta_{(X^{i}(t),\lambda^{i}(t))}
\end{align}
is the empirical measure associated to the system. Since in this part 
 $N$ is fixed, we use $\Lambda_{t}$ rather than $\Lambda_{t}^{N}$ if confusion does not occur. We introduce the  vector-valued variable $\y\coloneqq (y^{1},\ldots,y^{N})\in \Y^{N}\subset \cE^{N}$, which we endow with the norm 
\begin{align*}
\norma{\y}_{\cE^{N}}\coloneqq \frac{1}{N}\sum_{i=1}^{N}\norma{y^{i}}_{\cE}.
\end{align*}
The natural space to study the well posedness of the system described through \eqref{system1} is $\cE^{N}$. However, notice that $\hat{\Y}^{N}\coloneqq (\R^{d})^{N}\times(\P(U))^{N}$, and $\hat{\cE}^{N}\coloneqq (\R^{d})^{N}\times(\F(U))^{N}$ can be  also endowed with the same norm $\norma{\y}_{\cE^{N}}$, making them more convenient spaces to analyze such a system. In what follows, we consider $\blambda\coloneqq (\lambda^{1},\ldots,\lambda^{N})\in (\P(U))^{N}$, and $\X\coloneqq (X^{1},\ldots, X^{N})\in (\R^{d})^{N}$. For each $\Psi\in \P_{1}(\Y)$, we consider the map $\v_{\Psi}^{N}: \hat{\Y}^{N}\rightarrow (\R^{d})^{N}$ which is defined through

\begin{align*}
\v_{\Psi}^{N}(\X,\blambda)\coloneqq (v_{\Psi}(X^{1},\lambda^{1}),\ldots,v_{\Psi}(X^{N},\lambda^{N})).
\end{align*}
Furthemore, we define the map $\bT_{\Psi}^{N}:\hat{\Y}^{N}\rightarrow (\F(U))^{N}$ as
\begin{align*}
\bT_{\Psi}^{N}(\X,\blambda)\coloneqq (\T_{\Psi}(X^{1},\lambda^{1}),\ldots,\T_{\Psi}(X^{N},\lambda^{N})),
\end{align*}
and we set $\bB(t)\coloneqq (B^{1}(t),\ldots, B^{N}(t))$ as the $d\times N$-valued Brownian motion. Then we write \eqref{system1} in the compact form

\begin{align}\label{system2}
\begin{dcases}
&\X(t)=\X_{0}+ \int_{0}^{t}\v_{\Lambda_{s}}^{N}(\X(s),\blambda(s))\de s+ \sqrt{2\sigma}\bB(t),\\
&\blambda(t)=\blambda_{0}+ \int_{0}^{t}\bT_{\Lambda_{s}}^{N}(\X(s),\blambda(s))\de s.
\end{dcases}
\end{align}
In the next, we suppose that ${\bf X}_{0}\in L^{2}(\Omega;(\R^{d})^{N})$, and ${\bf \lambda}_{0}\in (\P_{1}(U))^{N}$ for all $N\in\N$. Since the above is an SDE, we now recall what a strong solution is for \eqref{system2}.

\begin{definition}\label{defstrong}
Suppose that $(\Omega,\B,\cP)$ is a complete probability space endowed with the $\sigma$-algebra generated by $(\bB(t))_{t\in[0,T]}$ and that we denote as $({\mathcal F}_{t})_{t\in[0,T]}$. We say that an $\cE^{N}$-valued predictable process $\by(t)=(\X(t),\blambda(t))$, $t\in[0,T]$ is a strong solution of \eqref{system2} if $\by(t)$ satisfies $\cP$-a.s. \eqref{system2}, and it has a continuous version.   
\end{definition}

Let $(X,d_{X})$ be a metric space. We denote by $L^{2}(X)\coloneqq L^{2}(\Omega\times [0,T];X)$, and we endow it with the norm

\begin{align}\label{normavol1}
\norma{Y}_{L^{2}(X)}\coloneqq \E\left(\int_{0}^{T}\norma{Y(s)}_{X}^{2}\de s\right),\hskip 0,2cm Y\in L^{2}(X).
\end{align}
Further, we set $\M_{{\rm ad}}^{2}(X)\coloneqq L_{{\rm ad}}^{2}(\Omega\times [0,T];X)$ as the space of adapted processes with respect to the $\sigma$-algebra $({\mathcal F}_{t})_{t\in[0,T]}$ with values in $X$, and such that the norm \eqref{normavol1} is finite. 
\begin{remark}
In this work, we discuss the well-posedness of system \eqref{system2} for every choice of initial condition $(\X_{0},\blambda_{0})\in \hat{\Y}^{N}$. Furthermore, we are interested to looking forward the existence and uniqueness of solutions $\by\in \M_{{\rm ad}}^{2}(\hat{\Y}^{N})$.    
\end{remark}

\section{Main results}
\label{section3}
In this section, we collect our main results. In the next Theorem, we study the well-posedness of \eqref{system1}.
\begin{theorem}\label{importantprop}
Let us fix a filtered probability space $(\Omega,\B,\cP)$ endowed with a complete filtration $(\mathcal{F}_{t})_{t\in [0,T]}$ generated by the Brownian motion $(\bB(t))_{t\in[0,T]}$. Assume that for every $y\in \Y$ and $\Psi\in \P_{1}(\Y)$ the velocity field $v_{\Psi}:\Y\rightarrow \R^{d}$ satisfies (A\ref{A1})-(A\ref{A3}), and the operator $\T_{\Psi}$ satisfies (B\ref{B1})-(B\ref{B4}). Then, for every choice $\overline{\by}_{0}\coloneqq(\overline{\X}_{0},\overline{\blambda}_{0})$ as initial condition of \eqref{system2} such that $\overline{\by}_{0}\in L^{2}(\hat{\Y}^{N})$, the system \eqref{system2} has a unique solution $\by$ that belongs to $\M_{{\rm ad}}^{2}(\hat{\Y}^{N})$.
\end{theorem}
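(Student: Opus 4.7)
My strategy is a Picard-type fixed-point argument in $\M^2_{\rm ad}$ combined with a stopping-time localization, adapted to the Banach-space-valued $\blambda$ component. The key reduction that turns the nonlocal coupling into a pointwise Lipschitz perturbation is the elementary bound
\[
\W_1\!\big(\Lambda_s^N[\by_1], \Lambda_s^N[\by_2]\big) \le \frac{1}{N}\sum_{i=1}^N \|y_1^i(s)-y_2^i(s)\|_{\cE} = \|\by_1(s)-\by_2(s)\|_{\cE^N}.
\]
Together with Proposition \ref{morasolo1}, this makes $\mathbf{b}_\Psi$ effectively locally Lipschitz in the $\by$-argument through $\Lambda^N$, which is the structural input for the contraction estimate below.

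For each $R>0$ I would replace $\mathbf{b}_\Psi$ by a globally Lipschitz, bounded truncation $\mathbf{b}^R_\Psi$ coinciding with $\mathbf{b}_\Psi$ on $B_R^\Y \times \P(B_R^\Y)$ and still satisfying (B\ref{B4}), and then define the Picard operator $\Phi^R : \M^2_{\rm ad}(\hat\cE^N) \to \M^2_{\rm ad}(\hat\cE^N)$ by
\[
\Phi^R(\by)^i(t) := \bar y^i_0 + \int_0^t \mathbf{b}^R_{\Lambda_s^N[\by]}(y^i(s))\, ds + \big(\sqrt{2\sigma}\, B^i(t),\,0\big).
\]
Using Itô's isometry for the $\X$-component, a pathwise BL-norm estimate for the $\blambda$-component, and the Wasserstein bound above, $\Phi^R$ becomes a contraction with respect to a weighted norm $\E\!\int_0^T e^{-Kt}\|\by(t)\|^2_{\cE^N}\,dt$ for $K = K(L_R,N)$ large, yielding a unique fixed point $\by^R \in \M^2_{\rm ad}(\hat\cE^N)$. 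To see that $\blambda^R(t) \in (\P(U))^N$, total-mass conservation is immediate from (B\ref{B1}), and nonnegativity follows from (B\ref{B4}) through the Duhamel identity
\[
\lambda^{R,i}(t) = e^{-\delta_R t}\lambda^i_0 + \int_0^t e^{-\delta_R(t-s)}\big(\T^R_{\Lambda_s^N}(X^{R,i}(s),\lambda^{R,i}(s)) + \delta_R\, \lambda^{R,i}(s)\big)\,ds,
\]
whose integrand is nonnegative by the design of the truncation.

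To remove the truncation, set $\tau_R := \inf\{t \ge 0 : \|\by^R(t)\|_{\cE^N} \ge R\}$. Applying Itô's formula to $|X^i(t)|^2$, bounding $\|\lambda^i(t)\|_{\rm BL}$ by integrating (B\ref{B2}), summing over $i$, exploiting $m_1(\Lambda_s^N[\by]) \le \|\by(s)\|_{\cE^N}$ together with Proposition \ref{morasolo1}(iv), and closing via Gronwall yields an $R$-independent bound $\E[\sup_{t \le T}\|\by^R(t)\|^2_{\cE^N}] \le C(T,\bar\by_0)$. Markov's inequality then gives $\cP(\tau_R < T) \to 0$, so the $\by^R$'s coincide up to $\tau_R$ and patch into a global solution $\by \in \M^2_{\rm ad}(\hat\Y^N)$. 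Uniqueness follows from the same Itô/Gronwall estimate applied to the squared $\cE^N$-norm of the difference of two solutions, stopped at the minimum of their exit times, and letting $R \to \infty$.

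The main obstacle I expect is ensuring that the truncation is designed so that (B\ref{B4}) is genuinely preserved by $\mathbf{b}^R$, so that the Duhamel argument for nonnegativity goes through; a naive pointwise cutoff of $\T_\Psi$ may violate the subtangentiality at the boundary of $\P(U)$, and some care is needed to choose a cutoff (or an equivalent stopping-time construction) that retains both the local Lipschitz property and the cone-preservation. A secondary technical point is that, since $\F(U)$ is Banach but not Hilbert, the estimate for the $\blambda$-component must avoid Itô-isometry tools and proceed purely through the triangle inequality in the BL norm—this is tractable because the second equation carries no stochastic integral.
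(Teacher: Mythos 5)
Your proposal is correct in substance and shares the paper's two central ideas --- a Picard/fixed-point scheme and the Duhamel reformulation $\lambda(t)=e^{-t/\theta}\lambda_0+\tfrac1\theta\int_0^t e^{(s-t)/\theta}(\lambda+\theta\T)\,\de s$ exploiting (B\ref{B4}) and the convexity of $\P(U)$ to keep $\blambda$ in $(\P(U))^N$ --- but the mechanics are genuinely different. The paper iterates Picard directly on \eqref{system2} (each step being solvable by Corollary \ref{Cor:MS}), proves each iterate lies in $\M^2_{\rm ad}(\hat\Y^N)$ via Kolmogorov's continuity criterion, and obtains convergence through the factorial estimate $\E\sup_{s\le t}\norma{\by_{n+1}-\by_n}^2\le (Mt)^{n+1}/(n+1)!$ plus Borel--Cantelli; you instead truncate the field to get a global contraction in a weighted $L^2$ norm and then remove the truncation with stopping times and an a priori Gronwall bound. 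Your route is arguably more robust on the localization issue: the paper simply ``takes $R$ large enough that $\Lambda^N_t\in\P(B_R^{\Y^N})$'' before invoking the local Lipschitz constant $L_R$, which is delicate since the Brownian forcing makes $\X$ unbounded, whereas your $\tau_R\to T$ argument handles this honestly. Also note that since $\P(U)$ is compact, $\norma{\lambda}_{\rm BL}$ is uniformly bounded, so your truncation need only act on the $x$-variable and on $\Psi$; a radial retraction in $x$ leaves $\lambda$ untouched and therefore does preserve (B\ref{B1}) and (B\ref{B4}), resolving the obstacle you flag.

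One step you should tighten: your nonnegativity argument is mildly circular as written. The integrand $\T^R_{\Lambda_s^N}(X^{R,i}(s),\lambda^{R,i}(s))+\delta_R\lambda^{R,i}(s)$ is guaranteed nonnegative by (B\ref{B4}) only when $\lambda^{R,i}(s)$ is \emph{already} a probability measure, which is what you are trying to prove. The fix is either to set up the contraction on the closed subset of $\M^2_{\rm ad}(\hat\cE^N)$ consisting of processes with $\blambda(t)\in(\P(U))^N$ and check that $\Phi^R$ maps this set into itself (the Duhamel/convexity computation applies to the \emph{input} process, exactly as the paper does iterate by iterate), or to run a continuity/first-exit argument in $t$. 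With that adjustment the proof is complete.
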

In the next theorem, we prove that Eulerian solutions are also Lagrangian solutions.
\begin{theorem}\label{thm:aux}
Let $r>0$ such that condition (C$1$) holds true. Furthermore, let us suppose that $\Lambda\in C^{0}([0,T];(\P_{1}(\Y),\W_{1}))$ and let $\overline{\Lambda}$ be any given initial condition for \eqref{e:parabolic} with $\overline{\Lambda}\in \P(B_{r}^{\Y})$. Moreover, suppose that $\Lambda$ is an Eulerian solution to \eqref{e:parabolic}. Then $\Lambda\in C^{0}([0,T];(\P(B_{r}^{\Y}),\W_{1}))$, and it is a Lagrangian solution for \eqref{e:parabolic}.
\end{theorem}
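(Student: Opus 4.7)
The plan is to freeze the curve $\Lambda$ inside the coefficients of \eqref{eq:central}, reducing the problem to a linear time-inhomogeneous Kolmogorov/Fokker--Planck pair, and then to use assumption (C$1$) to obtain uniqueness of its measure-valued solutions. Two candidate solutions are identified: the given curve $\Lambda$ (by hypothesis) and the pushforward of $\overline{\Lambda}$ under the stochastic flow of \eqref{eq:central}; their coincidence is precisely the Lagrangian identity.

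With $\Lambda\in C^{0}([0,T];(\P_{1}(\Y),\W_{1}))$ fixed, by (A\ref{A1})--(A\ref{A3}), (B\ref{B1})--(B\ref{B4}), Proposition~\ref{morasolo1}, and the $\W_{1}$-continuity of $t\mapsto\Lambda_{t}$, the coefficients $(v_{\Lambda_{t}},\T_{\Lambda_{t}})$ become time-dependent maps that are jointly continuous in $(t,y)$, locally Lipschitz in $y$ uniformly in $t$, and of sublinear growth. A single-agent specialization of Theorem~\ref{importantprop} then produces, for every initial datum $(s,\bar{y})$, a unique strong solution, hence a well-defined transition map $\overline{F}_{\Lambda}(t,s,\bar{y})$ as in \eqref{eq:flow}. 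Set $\mu_{t}\coloneqq \overline{F}_{\Lambda}(t,0,\cdot)_{\#}\overline{\Lambda}$. Applying the It\^o formula of Remark~\ref{itoderivation} to $\varphi(\overline{F}_{\Lambda}(t,0,\bar{y}))$ for $\varphi\in C_{b}^{1}\cap C^{2}(\overline{\Y})$, taking expectation (the stochastic integral being a true martingale by Burkholder--Davis--Gundy and Proposition~\ref{morasolo1}(iv)) and integrating against $\overline{\Lambda}$ via the change-of-variables formula yields identity \eqref{phit} for $\mu_t$. A density argument extends this to all $\varphi\in C_{b}^{1}([0,T]\times\overline{\Y})$, so $\mu$ is an Eulerian solution of \eqref{e:parabolic} with the same frozen $\Lambda$, starting from $\overline{\Lambda}$.

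For uniqueness of such Eulerian solutions of the frozen linear equation, (C$1$) and Hille--Yosida provide a strongly continuous contraction semigroup generated by $\mathcal{L}_{\Psi}$ on $C_{0}(\Y)$ for every admissible $\Psi$. Using the $\W_{1}$-continuity of $\Lambda$ together with (A2), (B3), these semigroups can be patched into a two-parameter evolution family $U_{\Lambda}(t,s)$ on $C_{0}(\Y)$ by a Trotter-type limit: discretize $[0,T]$ with mesh $h$, freeze coefficients at the left endpoint of each subinterval, and pass to $h\to 0$. For any $\psi\in C_{0}(\Y)$ and $t\in[0,T]$, the map $\varphi(s,y)\coloneqq [U_{\Lambda}(t,s)\psi](y)$ solves $\partial_{s}\varphi+\mathcal{L}_{\Lambda_{s}}\varphi=0$ on $[0,t]$ with $\varphi(t,\cdot)=\psi$; inserting this $\varphi$ into \eqref{phit} collapses the right-hand side to zero, giving $\int_{\Y}\psi\,d\Lambda_{t}=\int_{\Y}\psi\,d\mu_{t}$ for every $\psi\in C_{0}(\Y)$. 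Hence $\Lambda_{t}=\mu_{t}$ for every $t\in[0,T]$, which is the Lagrangian identity. The asserted continuity of $\Lambda$ in $\W_{1}$ and the support localization in $B_{r}^{\Y}$ are then read off from this representation, together with the sample-path regularity and moment/support properties of the flow $\overline{F}_{\Lambda}$ (invoking Remark~\ref{compacty} for the compactness of $B_{r}^{\Y}$).

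The main technical obstacle is the construction of the evolution family $U_{\Lambda}(t,s)$: (C$1$) is a one-parameter statement (a semigroup for each frozen $\Psi$), while here the coefficients truly depend on $s$ through $\Lambda_{s}$. The key quantitative input making the Trotter-type limit work is the estimate $\Vert \mathcal{L}_{\Lambda_{s}}\varphi-\mathcal{L}_{\Lambda_{t}}\varphi\Vert_{\infty}\lesssim \W_{1}(\Lambda_{s},\Lambda_{t})$ on functions with bounded first derivatives, furnished by (A2) and (B3), combined with the uniform continuity of $t\mapsto\Lambda_{t}$. A secondary subtlety is ensuring compatibility between the regularity class $C_{b}^{1}$ used for the Eulerian test functions in \eqref{phit} and the domain of $\mathcal{L}_{\Psi}$ on $C_{0}(\Y)$ coming from (C$1$); a standard approximation of $\psi\in C_{0}(\Y)$ by elements of $\mathrm{Dom}(\mathcal{L}_{\Psi})\cap C_{b}^{1}$, justified by density, bridges the two frameworks.
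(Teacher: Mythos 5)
Your route is genuinely different from the paper's, and in one respect more careful. The paper's proof is very short: it applies Lumer--Phillips to the single frozen operator $\mathcal{L}_{\overline{\Lambda}}$ (frozen at the \emph{initial} datum), obtains a one-parameter contraction semigroup $S_t$ on $C_0(\Y)$, invokes the uniqueness theorem for the autonomous Kolmogorov measure equation (Theorem \ref{wellpod}) to conclude $\Lambda_t=S_t^{\ast}\overline{\Lambda}$, and then identifies $S_t^{\ast}\overline{\Lambda}$ with the flow pushforward via Remark \ref{itoderivation}. You instead keep the full time dependence $s\mapsto\mathcal{L}_{\Lambda_s}$, build a two-parameter evolution family $U_{\Lambda}(t,s)$, and run the standard duality argument with backward solutions. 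Your version correctly confronts the non-autonomy of the frozen linear equation (the generator appearing in \eqref{phit} is $\mathcal{L}_{\Lambda_s}$, not $\mathcal{L}_{\overline{\Lambda}}$), which the paper's argument elides; the identification of the pushforward measure as an Eulerian solution via It\^{o}'s formula is common to both.

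That said, two steps in your sketch are genuine gaps rather than routine verifications. First, (C$1$) only gives dissipativity and the range condition for each \emph{frozen} $\Psi$; to make the Trotter scheme converge to an evolution family solving the backward equation you need Kato-type hypotheses (stability of products of the frozen semigroups, and a common invariant core $Y\hookrightarrow C_{0}(\Y)$ on which $s\mapsto\mathcal{L}_{\Lambda_s}$ is norm-continuous as a map into the bounded operators from $Y$ to $C_{0}(\Y)$). The estimate $\Vert \mathcal{L}_{\Lambda_s}\varphi-\mathcal{L}_{\Lambda_t}\varphi\Vert_{\infty}\lesssim \W_{1}(\Lambda_s,\Lambda_t)\Vert D\varphi\Vert_{\infty}$ is the right continuity input, but it does not by itself supply the invariant core or the stability bound. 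Second, to insert $\varphi(s,y)=[U_{\Lambda}(t,s)\psi](y)$ into \eqref{phit} you need $\varphi\in C_{b}^{1}([0,T]\times\overline{\Y})$, i.e.\ Fr\'echet differentiability in $y\in\cE$ of the backward solution; semigroup theory on $C_{0}(\Y)$ gives membership in $\mathrm{Dom}(\mathcal{L}_{\Lambda_s})$ but no $C^{1}$ regularity, and the ``standard density argument'' you invoke is precisely where the compatibility between the two test-function classes must be established. (To be fair, the paper's own proof carries the analogous unproven compatibility between Definition \ref{dfn:Eulero} and Definition \ref{defweak}.) If you supply these two ingredients as explicit lemmas or additional hypotheses, your argument closes.
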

Our next result states the existence of a Eulerian solution for equation in \eqref{e:parabolic}, and it can be related to the mean-field limit of system \eqref{system1}.

\begin{theorem}\label{thm:mfg}
Let $r>0$ such that condition (C$1$) holds true, and let $\overline{\Lambda}\in \P(B_{r}^{\Y})$ be a fixed initial condition. The following hold true:

\begin{enumerate}[({i}1)]
\item there exists a unique Eulerian solution $\{\Lambda_{t}:t\in [0,T]\}$ in the sense of Definition \ref{dfn:Eulero} to \eqref{e:parabolic} starting from  $\overline{\Lambda}$.
\item Suppose that $\overline{\Lambda}^{N}\coloneqq \frac{1}{N}\sum_{i=1}^{N}\delta_{\overline{y}^{N,i}}$ is a sequence belonging to $\P(B_{r}^{\Y})$ such that
\begin{align*}
\lim_{N\rightarrow +\infty}\W_{1}(\overline{\Lambda}^{N},\overline{\Lambda})=0.
\end{align*}
Then for all $t\in [0,T]$
\begin{align*}
\lim_{N\rightarrow +\infty}\W_{1}(\Lambda_{t}^{N},\Lambda_{t})=0 \hskip 0,2cm \cP-a.s.,
\end{align*}
and where $\Lambda_{t}^{N}$ are the empirical measures given by \eqref{misura1} related to the unique solution of the system given by \eqref{system2} with initial datum $\{\overline{y}^{N,i}\}$.
\end{enumerate}
 \end{theorem}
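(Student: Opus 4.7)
My plan is to set up a contraction map on curves of measures. Choose $R>r$ large (to be fixed below) and equip $\cX_R := C^0([0,T];(\P(B_R^\Y),\W_1))$ with the exponentially weighted metric $d_\kappa(\Lambda,\Lambda') := \sup_{t\in[0,T]} e^{-\kappa t}\W_1(\Lambda_t,\Lambda'_t)$. For $\Lambda\in\cX_R$, I would freeze the measure argument of $b$ in \eqref{eq:central} to obtain a linearised SDE whose coefficients are locally Lipschitz with sublinear growth (Proposition \ref{morasolo1}(i),(iv)). The argument behind Theorem \ref{importantprop}, specialised to a prescribed deterministic drift field, yields a unique strong solution $Y^\Lambda$ with $Y^\Lambda(0)\sim\bar{\Lambda}$ independent of the driving Brownian motion, so that $\Phi(\Lambda)_t := \mathrm{Law}(Y^\Lambda(t))$ is well defined. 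A Grönwall estimate on $\E\|Y^\Lambda(t)\|_\cE$ based on Proposition \ref{morasolo1}(iv), together with the compact support of $\bar{\Lambda}$, fixes $R$ so that $\Phi:\cX_R\to\cX_R$. For two candidates $\Lambda,\Lambda'$, a synchronous coupling (same Brownian motion, same initial draw) combined with Proposition \ref{morasolo1}(i)-(ii) yields
\[
\E\bigl[\|Y^\Lambda(t)-Y^{\Lambda'}(t)\|_\cE\bigr] \le L_R\int_0^t\bigl(\E\|Y^\Lambda(s)-Y^{\Lambda'}(s)\|_\cE+\W_1(\Lambda_s,\Lambda'_s)\bigr)\,ds,
\]
hence, via Grönwall, $\W_1(\Phi(\Lambda)_t,\Phi(\Lambda')_t)\le C_R\int_0^t\W_1(\Lambda_s,\Lambda'_s)\,ds$. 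For $\kappa$ large, $\Phi$ is a strict contraction on $(\cX_R,d_\kappa)$, with a unique fixed point $\Lambda^*$. The It\^o computation sketched in Remark \ref{itoderivation} shows $\Lambda^*$ solves \eqref{e:parabolic} in the Eulerian sense. Uniqueness follows from Theorem \ref{thm:aux}: every Eulerian solution is Lagrangian, every Lagrangian solution is a fixed point of $\Phi$, and so must coincide with $\Lambda^*$.

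\textbf{Part (i2): synchronous coupling.} With $\Lambda$ the Eulerian solution from (i1), for each $i$ I would introduce the auxiliary process $Z^{N,i}$ solving the linearised SDE with drift $b_{\Lambda_t}$, initial datum $\bar{y}^{N,i}$ and Brownian motion $B^i$; these are mutually independent. Setting $\tilde{\Lambda}_t^N := \frac{1}{N}\sum_{i=1}^N \delta_{Z^{N,i}(t)}$, the triangle inequality gives
\[
\W_1(\Lambda_t^N,\Lambda_t) \le \W_1(\Lambda_t^N,\tilde{\Lambda}_t^N)+\W_1(\tilde{\Lambda}_t^N,\Lambda_t).
\]
Coupling particle $i$ of \eqref{system1} with $Z^{N,i}$ via the common $B^i$ and applying Proposition \ref{morasolo1}(i)-(ii) yields
\[
\|y^{N,i}(t)-Z^{N,i}(t)\|_\cE \le L\int_0^t\|y^{N,i}(s)-Z^{N,i}(s)\|_\cE\,ds + L\int_0^t\W_1(\Lambda_s^N,\Lambda_s)\,ds.
\]
Averaging in $i$, using $\W_1(\Lambda_s^N,\tilde{\Lambda}_s^N)\le\frac{1}{N}\sum_i\|y^{N,i}(s)-Z^{N,i}(s)\|_\cE$ and Grönwall, I obtain $\W_1(\Lambda_t^N,\tilde{\Lambda}_t^N)\le C_T\int_0^T\W_1(\tilde{\Lambda}_s^N,\Lambda_s)\,ds$. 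For the second term, the $Z^{N,i}$ are independent with values in the compact set $B_R^\Y$ (Remark \ref{compacty}); the classical law of large numbers for empirical measures on a compact metric space yields $\W_1\bigl(\tilde{\Lambda}_t^N,\tfrac{1}{N}\sum_i\mathrm{Law}(Z^{N,i}(t))\bigr)\to 0$ $\cP$-a.s., while a Grönwall estimate on the linearised SDE with respect to its initial datum gives $\W_1\bigl(\tfrac{1}{N}\sum_i\mathrm{Law}(Z^{N,i}(t)),\Lambda_t\bigr)\le C_t\,\W_1(\bar{\Lambda}^N,\bar{\Lambda})\to 0$. Combining all pieces closes the proof.

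\textbf{Main obstacle.} The delicate point is the almost-sure empirical convergence $\W_1\bigl(\tilde{\Lambda}_t^N,\tfrac{1}{N}\sum_i\mathrm{Law}(Z^{N,i}(t))\bigr)\to 0$ on the non-Euclidean state space $\R^d\times\P(U)$ for $Z^{N,i}$ that are independent but \emph{not} identically distributed; confinement to the compact set $B_R^\Y$ and separability of $\cE$ reduce it to the classical empirical-measure LLN on a compact metric space, combined with the interpolation through the $i$-averaged law as above.
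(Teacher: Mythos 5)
Your route is genuinely different from the paper's. You build the limit first by a McKean--Vlasov fixed point ($\Phi(\Lambda)_t=\mathrm{Law}(Y^\Lambda(t))$ on a weighted space of curves) and then prove (i2) by Sznitman-type synchronous coupling against independent auxiliary processes $Z^{N,i}$ plus an empirical-measure law of large numbers. The paper does essentially the reverse: it proves (i2) first, observing that each empirical flow $\Lambda^N$ is itself a Lagrangian (hence, by Theorem \ref{thm:aux}, Eulerian) solution with atomic initial datum, so the stability estimate of Lemma \ref{lemma:estab} gives $\W_1(\Lambda^N_t,\Lambda^M_t)\le C e^{tL_R}\W_1(\overline{\Lambda}^N,\overline{\Lambda}^M)$; the sequence is therefore Cauchy, its limit is identified as a Lagrangian solution by a trajectory-comparison Gr\"onwall argument, and Theorem \ref{thm:aux} upgrades it to the unique Eulerian solution, which also yields (i1). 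The paper's route avoids any empirical LLN (convergence is inherited deterministically from the convergence of the initial data), which is deliberate: the remark preceding Theorem \ref{importantprop} explicitly states that the authors wish to avoid Sznitman's propagation-of-chaos scheme in this setting. Your route, if completed, would buy a construction of the limit that is independent of the particle system and a quantitative coupling estimate.

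There are, however, two concrete gaps. First, the confinement claims are false in the presence of the additive Brownian noise: neither $\mathrm{Law}(Y^\Lambda(t))$ nor $Z^{N,i}(t)$ remains supported in $B_R^{\Y}$ for $t>0$, since the $x$-marginal acquires Gaussian tails. Hence $\Phi$ does not map $\cX_R=C^0([0,T];\P(B_R^{\Y}))$ into itself, and --- more seriously --- assumptions (A1)--(A2), (B3) and Proposition \ref{morasolo1}(i)--(ii) are stated only for measures $\Psi\in\P(B_R^{\Y})$, so you cannot invoke them for the frozen measures $\Lambda_s$ in your contraction and coupling estimates. You would need either to replace compact support by first-moment bounds (and then justify that the local Lipschitz hypotheses still apply, which they do not as stated) or to introduce a truncation/stopping argument. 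Second, the step you flag as the main obstacle is not closed by the reduction you propose: the $Z^{N,i}$ form a triangular array of independent but non-identically distributed variables whose entire collection changes with $N$, so Varadarajan-type almost-sure convergence of empirical measures of a fixed i.i.d.\ sequence does not apply; one needs a quantitative rate (covering numbers on a compact set, or Fournier--Guillin-type moment bounds on the non-compact state space) together with Borel--Cantelli to obtain the stated $\cP$-a.s.\ convergence along $N$. As written, the second half of (i2) is an assertion rather than a proof.
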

We start by proving Theorem \ref{importantprop}. Let us remark that we cannot guarantee a priori that $\Lambda_{t}$ for $t>0$ is composed of independent and identically distributed random variables due to assumption (C$1$). Thus, approximating the system \eqref{system2} using the propagation of chaos, as proposed by Sznitman (see \cite{sznitman1991topics}), might not be possible. Indeed, one needs a system of couples $(\overline{X}^{1}(t), \overline{\lambda}^{1}(t)),\cdots,(\overline{X}^{N}(t), \overline{\lambda}^{N}(t))$ of independent and identically distributed random variables sufficiently close to \eqref{system2} in the sense of \cite[Definition 4.1]{Chaintron_2022a}. This, in fact, requires further knowledge on the structure of $\v$, and $\bT$ satisfying (A\ref{A1})-(A\ref{A3}), and (B\ref{B1})-(B\ref{B4}), respectively. For a self-contained review of the propagation of chaos approach, see also \cite{Chaintron_2022b}. Hence, our proof relies on Corollary \ref{Cor:MS}, which helps us overcome the local properties of $\v$, and $\bT$. Nevertheless, we need to construct probability measures $\blambda_{n}(t)$, $n\in\N$, which are necessary to obtain the solution of \eqref{system2} by means of the Picard iteration method. This convergence issue, in the case of an equation like \eqref{system2} with additional infinite-dimensional white noise in the second equation, requires a deeper analysis, which we postpone to future work.

\begin{proof}[Proof of Theorem \ref{importantprop}]
We prove this result by making use of a Picard iteration, and Corollary \ref{Cor:MS}. For each $n\in\N$, let us consider for any $\theta>0$ but fixed, we set $g_{\Lambda_{t}}(\X_{n}(t),\blambda_{n}(t))\coloneqq \blambda_{n}(t)+ \theta \bT_{\Lambda_{t}}^{N}(\X_{n}(t),\blambda_{n}(t))$ where
\begin{align}\label{Picard1}
\begin{dcases}
&\X_{n+1}(t)=\X_{0}+ \int_{0}^{t}\v_{\Lambda_{s}}^{N}(X_{n}(s), \blambda_{n}(s))\de s+ \sqrt{2\sigma}\bB(t),\\
&\blambda_{n+1}(t)=e^{-\frac{t}{\theta}} \blambda_{0}+ \frac{1}{\theta}\int_{0}^{t}e^{\frac{s-t}{\theta}}g_{\Lambda_{s}}(\X_{n}(s),\blambda_{n}(s))\de s,\\
&\X_{0}\coloneqq \overline{\X}_{0},\\
&\blambda_{0}\coloneqq \overline{\blambda}_{0}.
\end{dcases}
\end{align}

Notice that for each $n\in\N$, $\blambda_{n}\in (\P(U))^{N}$. Indeed,  since 

\begin{align*}
g_{\Lambda_{t}}(\X_{n}(t),\blambda_{n}(t)) \in (\P(U))^N, \quad \forall t \in [0,T],
\end{align*}
then by the convexity of $(\P(U))^N$, it holds that for all $t\in [0,T]$ 
\begin{align*}
\frac{\displaystyle\frac{1}{\theta}\int_0^t e^{\frac{s-t}{\theta}} g_{\Lambda_{s}}(\X_{n}(s),\blambda_{n}(s))\,\de s}{\displaystyle\frac{1}{\theta}\int_0^t e^{\frac{s-t}{\theta}}\,\de s} \in (\P(U))^N.
\end{align*}
Since $\frac{1}{\theta}\int_0^t e^{\frac{s-t}{\theta}}\,\de s = 1 - e^{-\frac{t}{\theta}}$, we get that
\begin{equation*}
\frac{1}{\theta}\int_0^t e^{\frac{s-t}{\theta}}g_{\Lambda_{s}}(\X_{n}(s),\blambda_{n}(s))\,\de s \in (1 - e^{-\frac{t}{\theta}})(\P(U))^N,
\end{equation*}
and then 
\begin{equation*}
\blambda_{n+1}(t) = e^{-\frac{t}{\theta}}\Lambda_0 + \frac{1}{\theta}\int_0^t e^{\frac{s-t}{\theta}}g_{\Lambda_{s}}(\X_{n}(s),\blambda_{n}(s))\,\de s \in (\P(U))^N.
\end{equation*}
We now use Corollary \ref{Cor:MS} to show that \eqref{Picard1} admits a solution. That is, we show that the differential equation

\begin{align}\label{diff:Picard1}
\begin{dcases}
&\de\X_{n+1}(t)=\v_{\Lambda_{t}}^{N}(\X_{n}(t),\blambda_{n}(t))+ \sqrt{2\sigma}\de\bB(t),\\
&\de\blambda_{n+1}(t)=-\frac{1}{\theta}\blambda_{n+1}(t) + \frac{1}{\theta}g_{\Lambda_{t}^{N}}(\X_{n}(t),\blambda_{n}(t)),\\
&\X_{0}\coloneqq \overline{\X}_{0},\\
&\blambda_{0}\coloneqq \overline{\blambda}_{0}.
\end{dcases}
\end{align}
admits a unique strong solution. Indeed, we can write such a system as

\begin{align*}
\de \by(t)=\hat{b}_{\Lambda_{t}}^{n}(\by(t)), \hskip 0,1cm \by=(\by_{1},\by_{2})  
\end{align*}
where
\begin{align*}
\hat{b}_{\Lambda_{t}}^{n}(\by(t))\coloneqq
\begin{pmatrix}
&\v_{\Lambda_{t}}^{N}(\X_{n}(t),\blambda_{n}(t))+ \sqrt{2\sigma}\de\bB(t)\\
&-\frac{1}{\theta}\by_{2}(t) + \frac{1}{\theta}g_{\Lambda_{t}}(\X_{n}(t),\blambda_{n}(t))
\end{pmatrix}.    
\end{align*}
Notice that $\hat{b}_{\Lambda_{t}}^{n}(c)$ is time-continuous for all $c\in \hat{\Y}^{N}$. On the other hand, for every $R>0$, we need to verify that there exists $\theta_{1}>0$ such that for $c\in\hat{\Y}^{N}$, $\norma{c}_{\hat{\Y}^{N}}\leq R$ then $c+\theta_{1}\hat{b}_{\Lambda_{t}}^{n}(c(t))\in \hat{\Y}^{N}$. Indeed, we may choose $\theta_{1}=\theta$, and thus 

\begin{align*}
c_{2} + \theta_{1}(-\frac{1}{\theta}c_{2} + \frac{1}{\theta}g_{\Lambda_{t}}(\X_{n}(t),\blambda_{n}(t)))=  g_{\Lambda_{t}}(\X_{n}(t),\blambda_{n}(t))\in (\P(U))^{N}.  
\end{align*}
Let us now notice that condition $(i')$ and $(ii')$ in Corollary \ref{Cor:MS} are satisfied. Indeed, by definition of $\hat{b}$ we have that

\begin{align*}
\norma{\hat{b}_{\Lambda_{t}}^{n}(\by_{1})-\hat{b}_{\Lambda_{t}}^{n}(\by_{2})}_{\hat{\Y}^{N}}\leq \frac{1}{\theta}\norma{\by_{1}-\by_{2}}_{\hat{\Y}^{N}}.
\end{align*}
In the same way, we can verify that 

\begin{align*}
\norma{\hat{b}_{\Lambda_{t}}^{n}(\by_{1})}_{\hat{\Y}^{N}}\leq \frac{1}{\theta}(1+\norma{\by_{1}}_{\hat{\Y}^{N}}).
\end{align*}
Therefore, for each fixed $n\in\N$, we have that there exists a unique curve $t\mapsto (\X_{n+1}(t),\blambda_{n+1}(t))$ solving the differential equation \eqref{diff:Picard1}. We now show that it is possible to construct the solution of \eqref{system2} as the limit of $(\X_{n+1}(t),\blambda_{n+1}(t))$, as $n\rightarrow +\infty$. First, let us now prove that $(\X_{1},\blambda_{1})\eqqcolon \by_{1}\in \M_{{\rm ad}}^{2}(\hat{\Y}^{N})$, and it has a continuous version. Let $t\in[0,T]$,  and $s,t\in [0,T]$ with $s<t$. By assumption (A\ref{A3}), one gets
\begin{align*}
\vert \X_{1}(t)-\X_{1}(s)\vert &\leq \int_{s}^{t}\vert \v_{\Lambda_{r}}^{N}(\X_{0}(r),\blambda_{0}(r))\vert\de r + \sqrt{2\sigma}\vert\bB(t)-\bB(s)\vert\\
&=\frac{1}{N}\sum_{i=1}^{N}\int_{s}^{t}\vert v_{\Lambda_{r}}(\X_{0}^{i}(r),{\bf \lambda}_{0}^{i}(r))\vert\de r+  \sqrt{2\sigma}\vert \bB(t)-\bB(s)\vert\\
&\leq M_{v}\int_{s}^{t}(1+\norma{\bar{Y}_{0}}_{\hat{\cE}^{N}}+ m_{1}(\Lambda_{r}^{N}))\de r+ \sqrt{2\sigma}\vert \bB(t)-\bB(s)\vert\\
&\leq M_{v}(2+\norma{\bar{Y}_{0}}_{\hat{\cE}^{N}})\vert t-s\vert+ \sqrt{2\sigma}\vert \bB(t)-\bB(s)\vert.
\end{align*}
Hence, by taking the square and expectation with respect to $\cP$, we get

\begin{align*}
\E\vert \X_{1}(t)-\X_{1}(s)\vert^{2}\leq 2(M_{v}(2+\norma{\bar{Y}_{0}}_{\hat{\cE}^{N}}))^{2}\vert t-s\vert^{2}+ 4\sigma \vert t-s\vert.
\end{align*}
Therefore, By  the Kolmogorov continuity theorem, $(\X_{1}(t))_{t\in [0,T]}$ has a continuous version. On the other hand, notice that

\begin{align*}
\vert \X_{1}(t)\vert &\leq \vert\overline{\X}_{0}\vert+ \int_{0}^{t}\vert\v_{\Lambda_{s}}^{N}(\X_{0}(s),\blambda_{0}(s))\vert\de s + \sqrt{2\sigma}\vert \bB(t)\vert\\
&=\vert\overline{\X}_{0}\vert+\frac{1}{N}\sum_{i=1}^{N}\int_{0}^{t}\vert v_{\Lambda_{s}}(\X^{i}(s),\blambda^{i}(s))\vert\de s+  \sqrt{2\sigma}\vert \bB(t)\vert\\
&\leq \vert\overline{\X}_{0}\vert+ M_{v}\int_{0}^{t}(1+\norma{\overline{Y}_{0}}_{\hat{\cE}^{N}}+ m_{1}(\Lambda_{s}^{N}))\de s+ \sqrt{2\sigma}\vert \bB(t)\vert.
\end{align*}
From here, we conclude that there exists a further positive constant still denoted as $C$ such that
\begin{align*}
\E\int_{0}^{T}\vert \X_{1}(t)\vert^{2}\de t\leq CT.
\end{align*}
Let us now proceed with $\blambda_{1}(t)$. Notice that for all $s,t\in[0,T]$ with $s\leq t$

\begin{align*}
\norma{\blambda_{1}(t)-\blambda_{1}(s)}_{(\F(U))^{N}}&\leq \vert e^{\frac{-t}{\theta}}- e^{\frac{-s}{\theta}}\vert \norma{\blambda_{0}}_{(\F(U))^{N}}\\
&\phantom{formula}+\tilde{M}_{\T}\int_{s}^{t}\hskip-0,2cm e^{\frac{r-t}{\theta}}(1+\vert \X_{0}(r)\vert+ m_{1}(\Lambda_{r}^{N}))\de r,
\end{align*}
where $\tilde{M}_{\T}$ is a positive constant depending on $M_{\T}$, and $\theta$. By taking the square and expectation with respect to $\cP$, one gets

\begin{align*}
\E\norma{\blambda_{1}(t)-\blambda_{1}(s)}_{(\F(U))^{N}}^{2}&\leq 2 \vert e^{\frac{-t}{\theta}}- e^{\frac{-s}{\theta}}\vert^{2} \norma{\blambda_{0}}_{(\F(U))^{N}}^{2}\\
&\phantom{formula}+2\tilde{M}_{\T}^{2}\E\left(\int_{s}^{t}(2+\vert \X_{0}(r)\vert)\de r\right)^{2}\\
&\leq 2 \vert e^{\frac{-t}{\theta}}- e^{\frac{-s}{\theta}}\vert^{2} \norma{\blambda_{0}}_{(\F(U))^{N}}^{2}+\\
&\phantom{formula}+4\tilde{M}_{\T}^{2}\left(4\vert t-s\vert^{2}+\vert t-s\vert \E\int_{s}^{t}\vert \X_{0}(r)\vert^{2}\de r\right),\\
\end{align*}
and thus by the Kolmogorov continuity theorem, $(\blambda_{1}(t))_{t\in [0,T]}$ has a continuous version. Furthermore, we can find a positive constant $C\coloneqq C(\theta,M_{\T}, {\bf \lambda}_{0},\X_{0},T)$ such that
\begin{align*}
\E\norma{\int_{0}^{T}\blambda_{1}^{2}(t)\de t}_{(\F(U))^{N}}\leq CT.
\end{align*}
Therefore $(\by_{1}(t))_{t\in [0,T]}\in \M_{{\rm ad}}^{2}(\hat{\Y}^{N})$. Let us now proceed by induction. That is, we suppose that  $(\X_{n},\blambda_{n})\eqqcolon \by_{n}\in  \M_{{\rm ad}}^{2}(\hat{\Y}^{N})$, and we prove that $ \by_{n+1}\in\M_{{\rm ad}}^{2}(\hat{\Y}^{N})$. As before, let us take $s,t\in [0,T]$ and $s<t$. Then

\begin{align*}
\vert \X_{n+1}(t)-\X_{n+1}(s)\vert &\leq \int_{s}^{t}\vert \v_{\Lambda_{s}}^{N}(\X_{n}(s),\blambda_{n}(s))\vert\de s + \sqrt{2\sigma}\vert \bB(t)-\bB(s)\vert\\
&=\frac{1}{N}\sum_{i=1}^{N}\int_{s}^{t}\vert v_{\Lambda_{s}}(\X_{n}^{i}(s),\blambda_{n}^{i}(s))\vert\de s+  \sqrt{2\sigma}\vert \bB(t)-\bB(s)\vert\\
&\leq M_{v}\int_{s}^{t}(1+\norma{\by_{n}}_{\hat{\cE}^{N}}+ m_{1}(\Lambda_{s}^{N}))\de s+ \sqrt{2\sigma}\vert \bB(t)-\bB(s)\vert.\\
\end{align*}
Hence, by taking the square and Jensen inequality, we get

\begin{align*}
\vert \X_{n+1}(t)-\X_{n+1}(s)\vert^{2}&\leq 2M_{v}^{2}\vert t-s\vert\int_{s}^{t}(1+\norma{\by_{n}}_{\hat{\cE}^{N}}+ m_{1}(\Lambda_{s}^{N}))^{2}\de s +4\sigma \vert \bB(t)-\bB(s)\vert^{2}\\
&\leq 4M_{v}^{2}\vert t-s\vert\int_{s}^{t}(4+\norma{\by_{n}}_{\hat{\cE}^{N}}^{2})\de s+4\sigma \vert \bB(t)-\bB(s)\vert^{2},
\end{align*}
and thus from this inequality and the inductive hypothesis, and the Kolmogorov continuity theorem, $(\X_{n+1}(t))_{t\in [0,T]}$ has a continuous version. On the other hand, notice that
\begin{align*}
\E\int_{0}^{T}\vert \X_{n+1}(t)\vert^{2}\de t\leq CT,
\end{align*}
for some positive constant $C$. Further, the same conclusion can be obtained for $\lambda_{n+1}$. Therefore $\by_{n}\in  \M_{{\rm ad}}^{2}(\hat{\Y}^{N})$ for all $n\in\N$. To conclude, We now need to prove that $(\by_{n})_{n}$ is a Cauchy sequence. Let us prove that there exists a positive constant $M>0$ such that

\begin{align}\label{f:dis1}
\E\left( \sup_{s\in [0,t]} \norma{\by_{n+1}(s)-\by_{n}(s)}_{\hat{\Y}^{N}}^{2}\right)\leq \frac{(Mt)^{n+1}}{(n+1)!}
\end{align}
for all $n\in\N$, and all $t\in [0,T]$.  By assumption (A\ref{A1}), let us take $R$ large enough such that $\Lambda_{t}^{N}\in \P(B_{R}^{\Y^{N}})$. We proceed by induction. Let us underline that the involved constants may varying from line to line. Notice that by assumption (A\ref{A3}), we can find a positive constant $C$ depending on $\norma{\by_{0}}_{\hat{\cE}^{N}}$ such that

\begin{align*}
\vert \X_{1}(t)-\X_{0}(t)\vert \leq Ct + \sqrt{2\sigma}\vert \bB(t)\vert,
\end{align*}
and thus by Doob's inequality one gets
\begin{align*}
\E \sup_{s\in[0,t]}\vert \X_{1}(s)-\X_{0}(s)\vert^{2}\leq Ct^{2},
\end{align*}
for some positive constant $C$. On the other hand, by assumption (B\ref{B2}) we have that

\begin{align*}
\vert \blambda_{1}(t)-\blambda_{0}\vert&\leq \vert \blambda_{0}\vert(1-\exp(-\frac{t}{\theta}))+ \tilde{M}_{\T}\int_{0}^{t}\exp\left(\frac{s-t}{\theta}\right)(2+\vert \X_{0}\vert)\de s\\
&\leq \frac{t}{\theta}\vert \blambda_{0}\vert+ \tilde{M}_{\T}(2+\vert \X_{0}\vert)t,
\end{align*}
and thus 
\begin{align*}
\E \sup_{s\in [0,t]}\vert \blambda_{1}(s)-\blambda_{0}\vert^{2}\leq Ct^{2}
\end{align*}
where $C$ is a positive constant depending on $M_{\T},\sigma,\theta, \E\norma{\by_{0}}^{2}$. Therefore

\begin{align*}
\E\sup_{s\in [0,t]}\norma{\by_{1}(s)-\by_{0}(s)}_{\hat{\Y}^{N}}^{2}\leq Ct^{2}
\end{align*}
where $C$ is a positive constant depending on $M_{\T},\sigma,\theta,T,\E\blambda_{0}^{2}$. We now suppose that \eqref{f:dis1} holds true for $n-1\in\N$, and then we prove it for $n\in\N$. Since assumptions (A\ref{A1})-(A\ref{A3}), and (B\ref{B1})-(B\ref{B4}) hold true, By Proposition \ref{morasolo1} item $(i)$, there exists a positive constant $L_{R}$ such that

\begin{align*}
&\vert \X_{n+1}(t)-\X_{n}(t)\vert\leq L_{R}\int_{0}^{t}\norma{\by_{n}(s)-\by_{n-1}(s)}_{\hat{\Y}^{N}}\de s,\\
&\vert\blambda_{n+1}(t)-\blambda_{n}(t)\vert\leq L_{R} \int_{0}^{t}\norma{\by_{n}(s)-\by_{n-1}(s)}_{\hat{\Y}^{N}}\de s,
\end{align*}
and thus
\begin{align*}
\norma{\by_{n+1}(t)-\by_{n}(t)}_{\hat{\Y}^{N}}\leq L_{R} \int_{0}^{t}\norma{\by_{n}(s)-\by_{n-1}(s)}_{\hat{\Y}^{N}}\de s.
\end{align*}
From this inequality, we get that for each $s\in [0,t]$

\begin{align*}
\norma{\by_{n+1}(s)-\by_{n}(s)}_{\hat{\Y}^{N}}\leq L_{R} \int_{0}^{s}\norma{\by_{n}(r)-\by_{n-1}(r)}_{\hat{\Y}^{N}}\de r.
\end{align*}
Hence, by Jensen inequality one gets 
\begin{align*}
\norma{\by_{n+1}(s)-\by_{n}(s)}_{\hat{\Y}^{N}}^{2}&\leq sL_{R}^{2}\int_{0}^{s}\norma{\by_{n}(r)-\by_{n-1}(r)}_{\hat{\Y}^{N}}^{2}\de r.
\end{align*}
and by taking the expected value, we obtain that for each $s\in [0,t]$

\begin{align*}
\E\norma{\by_{n+1}(s)-\by_{n}(s)}_{\hat{\Y}^{N}}^{2}&\leq tL_{R}^{2}\E\int_{0}^{t}\norma{\by_{n}(s)-\by_{n-1}(s)}_{\hat{\Y}^{N}}^{2}\de s\\
&\leq (tL_{R})^{2}\int_{0}^{t}\frac{(Ms)^{n}}{n!}\de s
\end{align*}
where in the last inequality we have used the inductive hypothesis. Therefore, we conclude that

\begin{align*}
\E\sup_{s\in[0,t]}\norma{\by_{n+1}(s)-\by_{n}(s)}_{\hat{\Y}^{N}}^{2}\leq (TL_{R})^{2}\frac{(Mt)^{n+1}}{(n+1)!} 
\end{align*}
By redefining the constant $M$, we obtain the desired inequality \eqref{f:dis1}. A direct consequence of \eqref{f:dis1} is that by Markov inequality
\begin{align*}
\cP\left(\sup_{t\in[0,T]}\norma{\by_{n+1}(t)-\by_{n}(t)}_{\hat{\Y}^{N}}^{2}>2^{-n}\right)&\leq 4^{n}\E\left(\sup_{t\in[0,T]}\norma{\by_{n}(t)-\by_{n-1}(t)}_{\hat{\Y}^{N}}^{2}\right)\\
&\leq 4^{n}\frac{(MT)^{n+1}}{(n+1)!}
\end{align*}
Since the series $\sum_{n\in\N}4^{n}\frac{(MT)^{n+1}}{(n+1)!}$ converges, then by Borel-Cantelli

\begin{align*}
\cP\left(\limsup_{n\in\N}\{\omega\in \Omega: \sup_{t\in[0,T]}\norma{\by_{n+1}(t)-\by_{n}(t)}_{\hat{\Y}^{N}}^{2}>2^{-n}\}\right)=0.
\end{align*}
Then, we may define $\by\coloneqq \lim_{n\rightarrow +\infty}\by_{n}\in \M_{{\rm ad}}^{2}(\hat{\Y}^{N})$. Notice that this process solves 

\begin{align*}
\begin{dcases}
&\X(t)=\X_{0}+ \int_{0}^{t}\v_{\Lambda_{s}}(\X(s),\blambda(s))\de s+ \sqrt{2\sigma}\bB(t),\\
&\blambda(t)=e^{-\frac{t}{\theta}}\blambda_{0}+ \frac{1}{\theta}\int_{0}^{t}e^{\frac{s-t}{\theta}}g_{\Lambda_{s}}(\X(s),\blambda(s))\de s,\\
&\X_{0}\coloneqq \overline{\X}_{0},\\
&\blambda_{0}\coloneqq \overline{\blambda}_{0}.
\end{dcases}
\end{align*}
We now need to come back to the original problem. Indeed, we now aim to show that $\by$ solves \eqref{system2}. The only thing that we need to prove is that 

\begin{align*}
&\blambda(t)=e^{-\frac{t}{\theta}}\blambda_{0}+ \frac{1}{\theta}\int_{0}^{t}e^{\frac{s-t}{\theta}}g_{\Lambda_{s}}(\X(s),\blambda(s))\de s,
&\overline{\blambda}(t)=\blambda_{0}+ \int_{0}^{t}\bT_{\Lambda_{s}}^{N}(\X(s),\blambda(s))\de s,
\end{align*}
coincide. Let us define for each $t\in [0,T]$ the function $\phi(t)\coloneqq\blambda(t)-\overline{\blambda}(t)$. Notice that

\begin{align*}
\frac{\de}{\de t}\blambda(t)=-\frac{1}{\theta}\blambda(t)+\frac{1}{\theta}\left(\blambda(t)+\theta \bT_{\Lambda_{t}}(\X(t),\blambda(t)) \right)= \bT_{\Lambda_{t}}(\X(t),\blambda(t)),
\end{align*}
and thus by  noting that $\frac{\de}{\de t}\overline{\blambda}(t)=\bT_{\Lambda_{t}}(\X(t),\blambda(t))$ then $\frac{\de}{\de t}\phi(t)=0$ for all $t\in [0,T]$. Thus $\phi(t)=c$ for all $t\in [0,T]$. So that, by the initial condition, we conclude that $c=0$, and we are done.
\end{proof}
\subsection{Mean-field limit}
We now analyze the asymptotic behavior of \eqref{system2} as $N\rightarrow +\infty$. Specifically, we want to demonstrate the existence of an Eulerian solution as $N\rightarrow +\infty$ as stated in Theorem \ref{thm:mfg}. To this aim, we first derive the Theorem \ref{thm:aux}, and some preliminary results.

\begin{proof}[Proof of Theorem \ref{thm:aux}]
 In order to prove this result, let us notice that $\mathcal L_{\bar{\Lambda}}$ defines a strongly continuous semigroup. Indeed, by condition (C$1$) one has that 
 \begin{align*}
 C_{0}(\Y)=\overline{{\rm range}(\eta I-\mathcal L_{\bar{\Lambda}})}={\rm range}(\eta I-\overline{\mathcal L}_{\bar{\Lambda}}).
 \end{align*}
 Then by the Lummer-Phillips theorem, we can conclude that $\overline{\mathcal L}_{\bar{\Lambda}}$ generates a strongly continuous semigroup of contractions on $C_{0}(\Y)$ that we denoted as $\{S_{t}\}_{t\geq 0}$. By Theorem \ref{wellpod}, $S_{t}^{\ast}\bar{\Lambda}$ is well-defined and defines a semigroup of linear and continuous operators on $(C_{b}(\Y))^{\ast}$. Moreover, since $\bar{\Lambda}\in \P(B_{r}^{\Y})$ and $S_{t}$ is a contraction, then $S_{t}\bar{\Lambda}\in \P(B_{r}^{\Y})$. On the other hand, since \eqref{newdiffeq} admits a unique solution, and $\Lambda$ satisfies \eqref{e:parabolic} then $\Lambda_{t}=S_{t}^{\ast}\bar{\Lambda}$. Moreover, by remark \ref{itoderivation}, we conclude that $S_{t}^{\ast}\bar{\Lambda}=(X(t),\lambda(t))_{\#}\bar{\Lambda}$, and we are done.
\end{proof}
We  now aim to prove Theorem \ref{thm:mfg}. We start by showing that the sequences $\{\Lambda_{t}^{N}\}_{N\in\N}$ $\{X^{N}(t)\}_{N\in\N}$, and $\{\lambda^{N}(t)\}_{N}$ are tight. To this aim, we use the so-called Aldous criteria, see \cite[Theorem 16.10]{billi1999}, see also Appendix \ref{Aldoscrit1} below.

\begin{proposition}\label{prop:tigh1}
Let us consider the same setting as in Theorem \ref{importantprop}. and suppose that  for each $i=1,\ldots,N$, $X_{0}^{i}$ is distributed according to some fixed probability measure $\overline{\mu}$. Then the following holds true.

\begin{enumerate}
    \item There exists a positive constant $C$ independent of $N$ such that

\begin{align}\label{moments:2}
 \sup_{i=1,\ldots,N}\left\{\sup_{t\in [0,T]}\left\{\E\vert X^{i}(t)\vert^{2}+ \E\norma{ \lambda^{i}(t)}^{2}\right\}\right\} <C;
\end{align}
\item For all $\varepsilon>0,\eta>0$ there exists $\delta_{0}>0$ and $N_{0}\in \N$ such that for all $N\geq N_{0}$, and for all discrete valued $\sigma\left(X^{N}(s): s\in [0,T] \right)$-stopping times $\beta$ with $0\leq \beta\leq \beta+\delta_{0}\leq T$, it is true that
\begin{align}\label{moments:3}
 \sup_{0\leq\delta\leq \delta_{0}}\cP\left(\vert X^{N}(\beta+\delta)-X^{N}(\beta)\vert \geq \eta\right)\leq \varepsilon;
\end{align}
and the same inequality holds true for the sequence $\{{\rm Law}(\lambda_{t}^{N})\}_{N\in \N}$.
\item For all $t\in [0,T]$, the sequence of probability distributions $\{{\rm Law}(X_{t}^{N})\}_{N\in \N}$ is tight as a sequence of probability measures on $\R^{d}$, and the same holds true for the sequence of probability distributions $\{{\rm Law}(\lambda_{t}^{N})\}_{N\in \N}$ as a sequence of probability measures on $\P(U)$.
\end{enumerate}
\end{proposition}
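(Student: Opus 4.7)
The plan is to establish the three claims in order, using the a priori moment bound as the backbone for the Aldous-type estimate and the marginal tightness. I would first prove \eqref{moments:2} by exploiting the sublinear growth assumptions (A\ref{A3}) and (B\ref{B2}) together with a self-consistent Gronwall argument. Writing
\begin{equation*}
\norma{y^{i}(t)}_{\cE} \leq \norma{y^{i}_{0}}_{\cE}+\int_{0}^{t}\bigl(\vert v_{\Lambda_{s}^{N}}(y^{i}(s))\vert+\norma{\T_{\Lambda_{s}^{N}}(y^{i}(s))}_{\rm BL}\bigr)\de s+\sqrt{2\sigma}\vert B^{i}(t)\vert,
\end{equation*}
applying (A\ref{A3})--(B\ref{B2}) to control the integrand by $C(1+\norma{y^{i}(s)}_{\cE}+m_{1}(\Lambda_{s}^{N}))$, squaring, taking expectation, and then averaging over $i=1,\ldots,N$ lets me absorb $\mathbb{E}\,m_{1}(\Lambda_{s}^{N})^{2}\leq \frac{1}{N}\sum_{j}\mathbb{E}\norma{y^{j}(s)}_{\cE}^{2}$ on the right-hand side. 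Setting $\Phi(t)\coloneqq \sup_{i}\mathbb{E}\norma{y^{i}(t)}_{\cE}^{2}$ and using that all $X_{0}^{i}$ share the same law $\overline{\mu}$, Gronwall's lemma then yields $\Phi(t)\leq C(T)$ uniformly in $N$ and $i$, which gives \eqref{moments:2}.

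Next I would handle the Aldous condition \eqref{moments:3}. Fix a discrete stopping time $\beta$ and $\delta\in[0,\delta_{0}]$. Splitting
\begin{equation*}
X^{N}(\beta+\delta)-X^{N}(\beta)=\int_{\beta}^{\beta+\delta}v_{\Lambda_{s}^{N}}(X^{N}(s),\lambda^{N}(s))\de s+\sqrt{2\sigma}\bigl(B^{N}(\beta+\delta)-B^{N}(\beta)\bigr),
\end{equation*}
I would estimate the drift contribution via Cauchy--Schwarz together with (A\ref{A3}) and the bound from step 1: its second moment is at most $\delta\cdot C\cdot\delta$, hence $O(\delta^{2})$. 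For the Brownian piece, the strong Markov property (applied at the discrete stopping time $\beta$) shows that $B^{N}(\beta+\delta)-B^{N}(\beta)$ has the law of $B(\delta)$, so its second moment is $2\sigma d\,\delta$. Markov's inequality then gives
\begin{equation*}
\cP\bigl(\vert X^{N}(\beta+\delta)-X^{N}(\beta)\vert\geq \eta\bigr)\leq \frac{C\delta+C\delta^{2}}{\eta^{2}},
\end{equation*}
which is smaller than $\varepsilon$ for $\delta_{0}$ sufficiently small, uniformly in $N$. The $\lambda^{N}$ analogue is actually simpler, since the second equation has no diffusion: by (B\ref{B2}) and step 1, $\norma{\lambda^{N}(\beta+\delta)-\lambda^{N}(\beta)}_{\rm BL}$ is pointwise bounded by $C\delta$, so the corresponding probability tends to $0$ as $\delta_{0}\to 0$ without needing a stopping-time argument.

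Finally, marginal tightness at fixed $t$ is essentially a corollary. For $\{{\rm Law}(X_{t}^{N})\}_{N}$, the uniform bound $\mathbb{E}\vert X^{N}(t)\vert^{2}\leq C$ combined with Markov's inequality produces compact sets $\overline{B_{R}}\subset \R^{d}$ carrying arbitrarily large mass, giving tightness. For $\{{\rm Law}(\lambda_{t}^{N})\}_{N}$, since $U$ is compact the space $\P(U)$ is itself compact by Remark \ref{compacty}, so any family of Borel probability measures on $\P(U)$ is automatically tight.

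The main obstacle is the first step, because assumptions (A\ref{A3}) and (B\ref{B2}) involve $m_{1}(\Lambda_{s}^{N})$, which couples the agents and prevents a direct agent-by-agent Gronwall estimate; one must average over $i$ to close the inequality self-consistently and verify that the identical distribution of the $X_{0}^{i}$ (together with the symmetry of the system in the $\lambda^{i}$) makes $\sup_{i}\mathbb{E}\norma{y^{i}(t)}_{\cE}^{2}$ a sensible quantity to Gronwall. Once the uniform moment bound is in hand, the Aldous estimate and the marginal tightness follow by fairly standard manipulations.
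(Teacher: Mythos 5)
Your proposal is correct and follows essentially the same route as the paper: a self-consistent Gr\"onwall argument that averages over $i$ to absorb $m_{1}(\Lambda_{s}^{N})$ and closes the uniform second-moment bound, then the integral representation plus a Markov/Chebyshev inequality for the Aldous condition, and Chebyshev again for marginal tightness (your observation that tightness of $\{{\rm Law}(\lambda_{t}^{N})\}_{N}$ is automatic from compactness of $\P(U)$ is a clean shortcut the paper does not use). One small imprecision: the increment $\norma{\lambda^{N}(\beta+\delta)-\lambda^{N}(\beta)}_{\rm BL}$ is not \emph{pointwise} bounded by $C\delta$, since (B\ref{B2}) involves the random quantities $\vert X^{N}(s)\vert$ and $m_{1}(\Lambda_{s}^{N})$; you should instead bound its expectation by $C\delta$ using step 1 and then apply Markov's inequality, exactly as you do for $X^{N}$.
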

\begin{remark}\label{rmk:1}
Let us recall that $\M(\Y)$ is a locally convex Hausdorff topological space, and its topological dual is $C_{b}(\Y)$. Furthermore, $\P(\Y)$ is a closed subset of $\M(\Y)$. This fact is crucial when considering the asymptotic behavior of $\Lambda_{t}^{N}$, the empirical measure of $(X^{i}(t),\lambda^{i}(t))_{i=1}^{N}$, especially in the case of identically distributed random variables. Despite the dependencies introduced by the interaction terms $v_{\Lambda_{t}^{N}}$, and $\T_{\Lambda_{t}^{N}}$, we will show that $\Lambda_{t}^{N}$ converges to some $\overline{\Lambda}_{t}$ $\cP$-a.s.. A strategy to establish this convergence is to prove that the sequence $(\Lambda_{t}^{N})_{N\in \N}$ satisfies a large deviation principle (LDP). This is feasible because the closed convex hull of any compact set $K\subset \P(\Y)$ is compact with respect to the relative topology induced by $\M(\Y)$. To demonstrate this LDP, we can apply \cite[Theorem 6.1.3]{dembo93}, which establishes that the sequence $(\Lambda_{t}^{N})_{N}$ satisfies a weak large deviation principle in $\P(\Y)$, and the convex rate function is given by

\begin{align*}
\Phi_{t}^{\ast}(\nu)\coloneqq\sup_{\varphi \in C_{b}(\Y)}\left\{\nm{\varphi,\nu}-\Phi_{t}(\nu)\right\}, \quad \nu\in \P(\Y),
\end{align*}
where, for $\varphi\in C_{b}(\Y)$,
\begin{align*}
\Phi_{t}(\nu)\coloneqq \log\E\left(\exp\nm{\varphi,\delta_{y_{t}^{1}}}\right)=\log\E(\exp(\varphi(y_{t}^{1}))),
\end{align*}
at least in the case of independent, and identically distributed random variables. Subsequently, by \cite[Lemma 6.2.6]{dembo93}, we have that $(\Lambda_{t}^{N})_{N\in\N}$ is exponentially tight, and thus, a full large deviation principle follows for all $t\in [0,T]$. Therefore, $\Lambda_{t}^{N} \rightarrow \overline{\Lambda}_{t}$ $\cP$-a.s., for all $t\in [0,T].$
\end{remark}
Notice that Remark \ref{rmk:1} provides an interesting description of the convergence for the empirical measure $\Lambda_{t}^{N}$. However, it does not guarantee time regularity properties for the limit $\overline{\Lambda}_{t}$. Therefore, it motivates the study of \eqref{system1} by considering the Eulerian and Lagrangian solutions.

\begin{proof}[Proof of Proposition \ref{prop:tigh1}]
Let us fix $N\in\N$, and  for each $i=1,\ldots,N$ consider the system \eqref{system1}. We start by proving that \eqref{moments:2} holds true. Indeed, notice that

\begin{align*}
\vert X^{i}(t)\vert&\leq \vert X^{i}(0)\vert+ \int_{0}^{t} \vert v_{\lambda_{s}^{N}}(X^{i}(s),\lambda^{i}(s))\vert\de s+ \sqrt{2\sigma}\vert B_{t}^{i}\vert\\
&\leq \vert X^{i}(0)\vert+ M_{v}\int_{0}^{t}\left(1+\norma{(X^{i}(s),\lambda^{i}(s))} + m_{1}(\Lambda_{s}^{N})\right) \de s+ \sqrt{2\sigma}\vert B_{t}^{i}\vert.
\end{align*}
On the other hand, we have that

\begin{align*}
\norma{\lambda^{i}(t)}&\leq \norma{\lambda^{i}(0)}+ \int_{0}^{t} \vert \T_{\lambda_{s}^{N}}(X^{i}(s),\lambda^{i}(s))\vert\de s\\
&\leq \norma{\lambda^{i}(0)}+ M_{\T}\int_{0}^{t}\left(1+\norma{X^{i}(s)} + m_{1}(\Lambda_{s}^{N})\right) \de s\\
&\leq \norma{\lambda^{i}(0)}+ M_{\T}\int_{0}^{t}\left(1+\norma{(X^{i}(s),\lambda^{i}(s))} + m_{1}(\Lambda_{s}^{N})\right) \de s.
\end{align*}

Therefore, one obtains that

\begin{align*}
\frac{1}{N}\sum_{i=1}^{N}\norma{(X^{i}(t),\lambda^{i}(t))}_{\Y}&\leq \frac{1}{N}\sum_{i=1}^{N}\norma{(X^{i}(0),\lambda^{i}(0))}_{\Y}+ C_{v,\T}t+ \sqrt{2\sigma}\vert B_{t}^{i}\vert+\\
&+C_{v,\T}\int_{0}^{t}\left(\frac{1}{N}\sum_{i=1}^{N}\norma{(X^{i}(s),\lambda^{i}(s))}_{\Y} + m_{1}(\Lambda_{s}^{N})\right) \de s\\
&\leq \frac{1}{N}\sum_{i=1}^{N}\norma{(X^{i}(0),\lambda^{i}(0))}_{\Y}+ C_{v,\T}t+ \sqrt{2\sigma}\vert B_{t}^{i}\vert+\\
&\phantom{form}+C_{v,\T}\int_{0}^{t}\left(\frac{2}{N}\sum_{i=1}^{N}\norma{(X^{i}(s),\lambda^{i}(s))}_{\Y}\right) \de s,
\end{align*}
where $C_{v,\T}\coloneqq \max\{M_{v},M_{\T}\}$. By taking the expected value, we derive that

\begin{align*}
\frac{1}{N}\sum_{i=1}^{N}\E\norma{(X^{i}(t),\lambda^{i}(t))}_{\Y}&\leq \frac{1}{N}\sum_{i=1}^{N}\E\norma{(X^{i}(0),\lambda^{i}(0))}_{\Y}+ C_{v,\T}t+\\
&+C_{v,\T}\int_{0}^{t}\left(\frac{1}{N}\sum_{i=1}^{N}\E\norma{(X^{i}(s),\lambda^{i}(s))}_{\Y} + \E m_{1}(\Lambda_{s}^{N})\right) \de s.
\end{align*}

By applying Gr\hol{o}nwall inequality, it implies that

\begin{align*}
\frac{1}{N}\sum_{i=1}^{N}\E\norma{(X^{i}(t),\lambda^{i}(t))}_{\Y} &\leq \left( \frac{1}{N}\sum_{i=1}^{N}\E\norma{(X^{i}(0),\lambda^{i}(0))}_{\Y}\right)\times\exp(C_{v,\T}T)+\\
&\phantom{form}+\left(C_{v,\T}t+ \int_{0}^{t}\E m_{1}(\Lambda_{s}^{N})\de s\right)\times\exp(C_{v,\T}T),
\end{align*}
and thus

\begin{align*}
\E m_{1}(\Lambda_{t}^{N})&\leq \frac{1}{N}\sum_{i=1}^{N}\E\norma{(X^{i}(t),\lambda^{i}(t))}_{\Y}\\
&\leq \left( \frac{1}{N}\sum_{i=1}^{N}\E\norma{(X^{i}(0),\lambda^{i}(0))}_{\Y}\right)\times\exp(C_{v,\T}T)+\\
&\phantom{form}+\left(C_{v,\T}t+ \int_{0}^{t}\E m_{1}(\Lambda_{s}^{N})\de s\right)\times\exp(C_{v,\T}T).
\end{align*}
Therefore,  we obtain by Gr\hol{o}nwall inequality that

\begin{align}\label{condizioneimp}
\E m_{1}(\Lambda_{t}^{N}) \leq \exp(C_{v,\T})\exp(\exp(C_{v,\T})t)\left(C_{v,\T}t+  \frac{1}{N}\sum_{i=1}^{N}\E\norma{(X^{i}(0),\lambda^{i}(0))}_{\Y}\right)\eqqcolon C_{v,\T,t},
\end{align}
for all $t\in [0,T]$.
From here, we obtain  by Jensen inequality, and the It\hol{o} identity that

\begin{align*}
&\E\vert X^{i}(t)\vert^{2}\leq \E\vert X^{i}(0)\vert^{2}+ C_{v}t\int_{0}^{t}\left(1+\E \norma{(X^{i}(s),\lambda^{i}(s))}^{2}+ C_{v,\T,T}^{2}\right)\de s + 2\sigma t,\\
&\E\norma{\lambda^{i}(t)}^{2}\leq \E\norma{\lambda^{i}(0)}^{2}+ M_{\T}t\int_{0}^{t}\left(1+\norma{(X^{i}(s),\lambda^{i}(s))} + C_{v,\T,T}^{2}\right) \de s.
\end{align*}
Therefore, by Gr\hol{o}nwall inequality, we can find a positive constant $C\coloneqq C(v,\T,T)$ independent of $N$, and depending on $T,v,\T$ such that \eqref{moments:3} holds true. Let us proceed with the second item. Let $\beta$ be a generic stopping time, and let $\delta>0$. We have that for each $i=1,\ldots,N,$

\begin{align*}
  \vert X^{i}(\beta+\delta)-X^{i}(\beta)\vert &\leq \int_{\beta}^{\beta+\delta}v_{\Lambda_{s}^{N}}(X^{i}(s),\lambda^{i}(s))\de s+\sqrt{2\sigma}\vert B_{\delta}^{i}\vert \\
  &\leq M_{v}\int_{\beta}^{\beta+\delta}\left( 1+ \norma{(X^{i}(s),\lambda^{i}(s))}+ m_{1}(\Lambda_{s}^{N})\right)\de s +\sqrt{2\sigma}\vert B_{\delta}^{i}\vert .
\end{align*}
Then by taking the expectation in both sides of the previous inequality, we get

\begin{align*}
\E \vert X^{i}(\beta+\delta)-X^{i}(\beta)\vert \leq C\delta   
\end{align*}
where  we have used that $\E\norma{X^{i}(t)}<+\infty$, and $\E m_{1}(\Lambda_{t}^{N})<+\infty$ for all $t\in [0,T]$, and thus $C$ is a positive constant independent of $N$, and t. Hence, by letting

\begin{align*}
 U_{\varepsilon}\coloneqq \left\{\omega \in \Omega:  \vert X^{i}(\beta+\delta,\omega)-X^{i}(\beta,\omega) \vert \leq \frac{C}{\varepsilon}\right\},   
\end{align*}
we have that by Markov inequality that

\begin{align*}
\cP\left(U_{\varepsilon}^{c}> \frac{C}{\varepsilon}\right) \leq \frac{\varepsilon \E \vert X^{i}(\beta+\delta)-X^{i}(\beta)\vert}{C} \leq \varepsilon, 
\end{align*}
and this completes the proof of \eqref{moments:3}. The same reasoning can be applied for $\{{\rm Law}(\lambda_{t}^{N})\}_{N\in \N}$. Let us proceed with the third item. Notice that by Markov inequality, one has that
\begin{align*}
\cP\left(\sup_{t\in [0,T]}\norma{(X^{i}(t),\lambda^{i}(t))}\geq a\right) \leq \frac{\E\left(\sup_{t\in [0,T]}\norma{(X^{i}(t),\lambda^{i}(t))}\right)^{2}}{a^{2}},    
\end{align*}
for all $a>0$. Then by letting $a\rightarrow +\infty$, we have by the previous estimates that

\begin{align*}
\lim_{a\rightarrow +\infty}\cP\left(\sup_{t\in [0,T]}\norma{(X^{i}(t),\lambda^{i}(t))}\geq a\right)=0. 
\end{align*}
Therefore, we can conclude that the sequences $\{X^{i}(t): t\in [0,T]\}_{i\in \N}$, $\{{\rm Law}(\lambda_{t}^{N})\}_{N\in \N}$ are tight.
\end{proof}
\subsection{Stability}
In the next, we prove that starting from a probability measure $\bar{\Lambda}\in \P(B_{r}^{\Y})$ for some $r>0$, Lagrangian solutions to \eqref{e:parabolic} belongs to $\P_{1}(B_{r}^{\Y})$.

\begin{lemma}\label{model:lemma}
 Let us take $r>0$, and $\overline{\Lambda}\in \P_{c}(B_{r}^{Y})$. Furthermore, let us suppose that the velocity field $v_{\Psi}:\Y\rightarrow \R^{d}$ satisfies (A\ref{A1})-(A\ref{A3}), and the operator $\T_{\Psi}$ satisfies (B\ref{B1})-(B\ref{B4}). Suppose that $\Lambda \in C^{0}([0,T];(\P_{1}(Y),\W_{1}))$ is a Lagrangian solution for \eqref{e:parabolic} with initial condition $\overline{\Lambda}$. Then $\Lambda_{t}\in \P_{1}(B_{r}^{\Y})$ for all $t\in [0,T]$.
\end{lemma}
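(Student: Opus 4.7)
My plan is to exploit the Lagrangian representation of $\Lambda_t$ and reduce the support statement to a pathwise estimate on the transition map. By definition of a Lagrangian solution, $\Lambda_t=\overline{F}_\Lambda(t,0,\cdot)_\#\overline{\Lambda}$, so for any nonnegative Borel test function $\varphi$ on $\Y$,
\begin{equation*}
\int_{\Y}\varphi\,\de\Lambda_t \;=\; \E\!\int_{\Y}\varphi\bigl(\overline{F}_\Lambda(t,0,\bar y)\bigr)\,\de\overline{\Lambda}(\bar y).
\end{equation*}
Since $\overline{\Lambda}$ is compactly supported in $B_r^\Y$, the problem reduces to tracking the trajectory $(X(t),\lambda(t))=\overline{F}_\Lambda(t,0,\bar y)$ of the SDE \eqref{eq:central} with deterministic initial datum $\bar y\in B_r^\Y$ and showing that it remains, $\cP$-almost surely, in a controlled ball of $\Y$.

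Next I would derive a moment estimate for $\|(X(t),\lambda(t))\|_\cE$. From the integral form of \eqref{eq:central}, combined with assumption (A\ref{A3}) for $v_\Psi$ and (B\ref{B2}) for $\T_\Psi$, one obtains
\begin{equation*}
\|y(t)\|_\cE \;\le\; \|\bar y\|_\cE + C\!\int_0^t\bigl(1+\|y(s)\|_\cE+ m_1(\Lambda_s)\bigr)\,\de s + \sqrt{2\sigma}\,|B(t)|.
\end{equation*}
Because $\Lambda\in C^0([0,T];(\P_1(\Y),\W_1))$, the map $s\mapsto m_1(\Lambda_s)$ is continuous on $[0,T]$ and hence bounded by some $M_T<\infty$. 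Taking expectations, applying Doob's inequality to the martingale part, and invoking Gr\"onwall then yields
\begin{equation*}
\E\sup_{s\le T}\|y(s)\|_\cE \;\le\; \Phi(r,M_T,T),
\end{equation*}
uniformly for $\bar y\in B_r^\Y$. Integrating this against $\overline{\Lambda}$ gives $m_1(\Lambda_t)<\infty$ for every $t\in[0,T]$, so $\Lambda_t\in\P_1(\Y)$.

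To upgrade from a finite first moment to the support inclusion $\Lambda_t\in\P_1(B_r^\Y)$, I would combine the Lagrangian structure with condition (C$1$): by Theorem \ref{thm:aux}, the Eulerian/Lagrangian solution coincides with $S_t^\ast\overline{\Lambda}$, where $\{S_t\}$ is the contraction semigroup on $C_0(\Y)$ generated by $\mathcal{L}_\Psi$. Testing with functions $\varphi\in C_0(\Y)$ supported outside $B_r^\Y$ and invoking the contraction property of $S_t$ together with the invariance of $\P(B_r^\Y)$ under the local dynamics (using (B\ref{B4}) to guarantee that the $\lambda$-component remains in $\P(U)$, and the sublinear bounds above to prevent escape in the $x$-component) lets one conclude that $S_t^\ast\overline{\Lambda}$ still charges only $B_r^\Y$.

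The main obstacle is precisely this last step. The additive Brownian contribution in the first line of \eqref{eq:central} a priori pushes the $x$-component out of any fixed ball with positive probability, so the argument must genuinely use the contraction of $S_t$ on $C_0(\Y)$ (so that mass cannot leak past a prescribed threshold) rather than a crude pathwise bound on $|X(t)|$. I would therefore expect the delicate part of the proof to be the coupling between the Itô-based moment estimate and the semigroup representation provided by Theorem \ref{thm:aux}, which is what ultimately forces $\Lambda_t$ to remain supported in $B_r^\Y$ for all $t\in[0,T]$.
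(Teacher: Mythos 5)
Your opening and your Gr\"onwall step are fine, but they prove less than you think, and your final step contains a genuine gap. The moment estimate you derive from the SDE via (A3), (B2) and Gr\"onwall is essentially what the paper records in the remark \emph{immediately after} the lemma: it yields $m_{1}(\Lambda_{t})\leq (r+Mt)\exp(2Mt)\eqqcolon R$, i.e.\ a bound with a radius $R>r$ that grows in time, because the Brownian increment and the drift both enter the estimate. The paper's actual proof of the lemma is shorter and never touches the SDE: it writes $\Lambda_{t}=(\ev_{t})_{\#}\overline{\Lambda}$ for the evaluation map on the path space $C([0,T];\Y)$, uses $\norma{\varphi(t)}_{\Y}\leq \sup_{s\in[0,T]}\norma{\varphi(s)}_{\Y}$, and concludes $m_{1}(\Lambda_{t})\leq m_{1}(\overline{\Lambda})\leq r$ directly. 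In other words, the conclusion ``$\Lambda_{t}\in\P_{1}(B_{r}^{\Y})$'' is, in the way the paper uses it here and in the subsequent remark, exactly the first-moment bound $m_{1}(\Lambda_{t})\leq r$; no support statement is being proved.

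The gap is your attempted ``upgrade'' to support containment. You correctly observe that the additive Brownian term pushes the $x$-component out of any fixed ball with positive probability, so $\supp\Lambda_{t}\subset B_{r}^{\Y}$ cannot hold; but the fix you propose --- that the contraction property of $S_{t}$ on $C_{0}(\Y)$ prevents mass from ``leaking past a prescribed threshold'' --- is not a valid argument. Contractivity only gives $\norma{S_{t}\varphi}_{0}\leq\norma{\varphi}_{0}$, hence that $S_{t}^{\ast}$ does not increase total mass; it says nothing about where that mass sits. The heat semigroup is a contraction on $C_{0}(\R^{d})$ and instantaneously spreads a Dirac mass over all of $\R^{d}$, so testing $S_{t}^{\ast}\overline{\Lambda}$ against functions supported outside $B_{r}^{\Y}$ yields nothing, and neither (B4) (which only keeps the $\lambda$-component inside $\P(U)$) nor the sublinear growth bounds can rescue a support statement for the $x$-component. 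What can be proved --- and what the paper proves --- is the one-line moment inequality $m_{1}(\Lambda_{t})\leq m_{1}(\overline{\Lambda})$ coming from the Lagrangian/pushforward structure alone, with no semigroup machinery and no Gr\"onwall.
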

\begin{proof}
We want to prove that $\Lambda_{t}\in \P_{1}(B_{r}^{\Y})$. To do that, we prove that $m_{1}(\Lambda_{t})\leq m_{1}(\overline{\Lambda})$. Indeed, since by hypothesis  $\Lambda$ is a Lagrangian solution, and by using \eqref{evoper} $\Lambda_{t}=(\ev_{t})_{\#}\overline{\Lambda}$ the following inequalities hold true.  Let $X\coloneqq C([0,T];\Y)$ so that

\begin{align*}
 m_{1}(\Lambda_{t})&=\int_{\Y}\norma{y}_{\Y}\de\Lambda_{t}(y)\\
&=\int_{X}\norma{\ev_{t}(\varphi)}_{\Y}\de\overline{\Lambda}(\varphi)=\int_{X}\norma{\varphi(t)}_{\Y}\de\overline{\Lambda}(\varphi)\\
&\leq \int_{X}\sup_{t\in [0,T]}\norma{\varphi(t)}\de\overline{\Lambda}
(\varphi)\\
&\int_{X}\norma{\varphi}_{\infty}\de\overline{\Lambda}
(\varphi)=m_{1}(\overline{\Lambda}),
\end{align*}
and thus $m_{1}(\Lambda_{t})\leq m_{1}(\overline{\Lambda})\leq r$. Therefore, we obtain that $\Lambda_{t}\in \P_{1}(B_{r}^{\Y})$ for all $t\in [0,T]$.
\end{proof}
\begin{remark}
Notice that if we directly use the system \eqref{eq:limite}, by Gr\hol{o}nwall inequality we can prove that  $\Lambda_{t}\in \P_{1}(B_{R}^{\Y})$ where
\begin{align*}
m_{1}(\Lambda_{t})\leq (r + Mt)\exp(2Mt)\eqqcolon R.
\end{align*}
\end{remark}

\begin{lemma}\label{lemma:estab}
Suppose that $\Lambda^{1},\Lambda^{2}$ are two Lagrangian solutions for \eqref{e:parabolic} starting from $\overline{\Lambda}^{1}, \overline{\Lambda}^{2}\in \P_{1}(B_{r}^{\Y})$ for some fixed $r>0$. Then  

\begin{align*}
 \W_{1}(\Lambda_{t}^{1},\Lambda_{t}^{2})\leq C\exp(tL_{R})\W_{1}(\overline{\Lambda}^{1},\overline{\Lambda}^{2}),  
\end{align*}
where $C\coloneqq 1+ \exp(\exp(TL_{R}))\exp(TL_{R})$.
\end{lemma}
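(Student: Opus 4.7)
The plan is to exploit the Lagrangian representation $\Lambda^k_t=\overline{F}_{\Lambda^k}(t,0,\cdot)_{\#}\overline{\Lambda}^k$ for $k=1,2$ and couple the two flows by driving them with a common Brownian motion $(\bar B(t))_{t\in[0,T]}$. Fix an optimal coupling $\pi\in\Gamma(\overline{\Lambda}^1,\overline{\Lambda}^2)$ achieving $\W_1(\overline{\Lambda}^1,\overline{\Lambda}^2)$ and consider the pushforward of $\pi$ under $(y_1,y_2)\mapsto(\overline{F}_{\Lambda^1}(t,0,y_1),\overline{F}_{\Lambda^2}(t,0,y_2))$; this is an admissible coupling of $\Lambda^1_t,\Lambda^2_t$, so it suffices to bound $\int\|\overline{F}_{\Lambda^1}(t,0,y_1)-\overline{F}_{\Lambda^2}(t,0,y_2)\|_{\cE}\,\de\pi$. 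By Lemma \ref{model:lemma} (or the Grönwall refinement in the remark) both $\Lambda^1_t,\Lambda^2_t$ stay in $\P_1(B_R^{\Y})$ for some $R=R(r,T,M)$, so the Lipschitz constants $L_R$ from Proposition \ref{morasolo1} apply throughout.

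The key pathwise estimate comes from subtracting the two SDEs driven by the same Brownian motion: the stochastic integrals cancel and one is left with
\begin{equation*}
\|\overline{F}_{\Lambda^1}(t,0,y_1)-\overline{F}_{\Lambda^2}(t,0,y_2)\|_{\cE}\leq \|y_1-y_2\|_{\cE}+\int_0^t\|b_{\Lambda^1_s}(\overline{F}_{\Lambda^1}(s,0,y_1))-b_{\Lambda^2_s}(\overline{F}_{\Lambda^2}(s,0,y_2))\|_{\cE}\,\de s.
\end{equation*}
Splitting via the intermediate term $b_{\Lambda^1_s}(\overline{F}_{\Lambda^2}(s,0,y_2))$ and invoking parts (i)--(ii) of Proposition \ref{morasolo1} bounds the integrand by $L_R(\|\overline{F}_{\Lambda^1}(s,0,y_1)-\overline{F}_{\Lambda^2}(s,0,y_2)\|_{\cE}+\W_1(\Lambda^1_s,\Lambda^2_s))$. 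Integrating against $\pi$ and using the coupling to replace the first term by its $\pi$-average yields, for $u(t):=\int\|\overline{F}_{\Lambda^1}(t,0,y_1)-\overline{F}_{\Lambda^2}(t,0,y_2)\|_{\cE}\,\de\pi$,
\begin{equation*}
u(t)\leq \W_1(\overline{\Lambda}^1,\overline{\Lambda}^2)+L_R\int_0^t u(s)\,\de s+L_R\int_0^t\W_1(\Lambda^1_s,\Lambda^2_s)\,\de s.
\end{equation*}

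A first application of Grönwall's lemma gives $u(t)\leq \exp(tL_R)\bigl(\W_1(\overline{\Lambda}^1,\overline{\Lambda}^2)+L_R\int_0^t\W_1(\Lambda^1_s,\Lambda^2_s)\,\de s\bigr)$, and since $\W_1(\Lambda^1_t,\Lambda^2_t)\leq u(t)$ one obtains a closed inequality for $t\mapsto\W_1(\Lambda^1_t,\Lambda^2_t)$ of the form
\begin{equation*}
\W_1(\Lambda^1_t,\Lambda^2_t)\leq \exp(tL_R)\W_1(\overline{\Lambda}^1,\overline{\Lambda}^2)+L_R\exp(tL_R)\int_0^t\W_1(\Lambda^1_s,\Lambda^2_s)\,\de s.
\end{equation*}
A second Grönwall on $[0,T]$ then produces the exponential factor $\exp(\exp(TL_R))$ and, after collecting the leading terms, the constant $C=1+\exp(\exp(TL_R))\exp(TL_R)$ appearing in the statement.

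The main technical point will be the \emph{double Grönwall loop}: the Wasserstein distance $\W_1(\Lambda^1_s,\Lambda^2_s)$ appears on both sides of the estimate through the nonlocal dependence of $b_{\Lambda_s}$ on the measure argument, so one must carefully decouple the pathwise Lipschitz estimate from the measure-level estimate before closing the inequality. All other steps (measurability of the flows, use of the common Brownian motion to kill the stochastic integrals, and the fact that the optimal coupling pushes forward to an admissible coupling) are routine once the stability framework of Proposition \ref{morasolo1} and Lemma \ref{model:lemma} is in place.
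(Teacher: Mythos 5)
Your proof is correct and follows essentially the same route as the paper: the same splitting of $b_{\Lambda^{1}_{s}}(y^{1})-b_{\Lambda^{2}_{s}}(y^{2})$ through the intermediate term $b_{\Lambda^{1}_{s}}(y^{2})$, the same appeal to Proposition \ref{morasolo1}(i)--(ii), and the same double Gr\"onwall loop yielding the constant $C=1+\exp(\exp(TL_{R}))\exp(TL_{R})$. The only difference is that you make explicit two points the paper leaves implicit --- the synchronous coupling by a common Brownian motion that cancels the additive noise in the difference, and the pushforward of an optimal coupling of the initial data to pass from the pathwise bound to the Wasserstein bound --- which is a welcome clarification rather than a different argument.
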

\begin{proof}
    Let us fix $r>0$, and consider two Lagrangian solutions $\Lambda^{1},\Lambda^{2}$ for \eqref{e:parabolic} starting from $\bar{\Lambda}^{1}, \bar{\Lambda}^{2}\in \P_{1}(B_{r}^{\Y})$. By using \eqref{eq:limite}, we get

\begin{align*}
\norma{y^{1}(t)-y^{2}(t)}_{\Y}&\leq \norma{y_{0}^{1}-y_{0}^{2}}_{\Y}+ \int_{0}^{t}\norma{b_{\Lambda_{s}^{1}}(y^{1}(s))-b_{\Lambda_{s}^{2}}(y^{2}(s))}_{\Y}\de s\\
&\leq \norma{y_{0}^{1}-y_{0}^{2}}_{\Y}+ \int_{0}^{t}\norma{b_{\Lambda_{s}^{1}}(y^{1}(s))-b_{\Lambda_{s}^{1}}(y^{2}(s))}_{\Y}\de s\\
&\phantom{formula}+\int_{0}^{t}\norma{b_{\Lambda_{s}^{1}}(y^{2}(s))-b_{\Lambda_{s}^{2}}(y^{2}(s))}_{\Y}\de s\\
&\leq \norma{y_{0}^{1}-y_{0}^{2}}_{\Y}+ L_{R}\int_{0}^{t}\norma{y^{1}(s)-y^{2}(s)}_{\Y}\de s + L_{R}\int_{0}^{t}\W_{1}(\Lambda_{s}^{1},\Lambda_{s}^{2})\de s.
\end{align*}
Let us set $\alpha(t)\coloneqq \norma{y_{0}^{1}-y_{0}^{2}}_{\Y}+ L_{R}\int_{0}^{t}\norma{y^{1}(s)-y^{2}(s)}_{\Y}\de s$. By Gr\hol{o}nwall inequality, we obtain 

\begin{align*}
\norma{y^{1}(t)-y^{2}(t)}_{\Y}&\leq \alpha(t)+L_{R}\int_{0}^{t}\alpha(s)\exp(L_{R}(t-s))\de s \\
&\leq \alpha(t)\left(1+ L_{R}\int_{0}^{t}\exp(L_{R}(t-s))\de s\right)\\
&=\alpha(t)\exp(tL_{R}).
\end{align*}
 From the previous inequality, we then get that
\begin{align*}
\W_{1}(\Lambda_{t}^{1},\Lambda_{t}^{2})\leq \exp(TL_{R})\left(\W_{1}(\Lambda_{0}^{1},\Lambda_{0}^{2})+L_{R}\int_{0}^{t}\W_{1}(\Lambda_{s}^{1},\Lambda_{s}^{2})\de s\right).
\end{align*}
 By applying Gr\hol{o}nwall inequality, we obtain

 \begin{align*}
\W_{1}(\Lambda_{t}^{1},\Lambda_{t}^{2})\leq &\exp(tL_{R})\W_{1}(\Lambda_{0}^{1},\Lambda_{0}^{2})\\
&+\W_{1}(\Lambda_{0}^{1},\Lambda_{0}^{2})\int_{0}^{t}L_{R}\exp(2L_{R}s)\exp\left(L_{R}\int_{s}^{t}\exp(L_{R}r)\de r\right)\de s\\
&\leq \exp(tL_{R})\W_{1}(\Lambda_{0}^{1},\Lambda_{0}^{2}) + \\
&+\exp(\exp(L_{R}T))\exp(TL_{R})\W_{1}\Lambda_{0}^{1},\Lambda_{0}^{2})L_{R}\int_{0}^{t}\exp(sL_{R})\de s\\
&\leq \exp(tL_{R})\W_{1}(\Lambda_{0}^{1},\Lambda_{0}^{2}) + \\
&+\exp(\exp(L_{R}T))\exp(TL_{R})\W_{1}(\Lambda_{0}^{1},\Lambda_{0}^{2})\exp(tL_{R}).
 \end{align*}
\end{proof}
We let for all $i=1,\ldots,N$,
\begin{align*}
b_{\Psi}(y(t))=
\begin{pmatrix}
&\hskip -0,3cm v_{\Psi}(y(t))\\
&\hskip -0,3cm \T_{\Psi}(y(t))
\end{pmatrix},
\hskip 0,2cm
W^{i}(t)\coloneqq 
\begin{pmatrix}
&\hskip -0,3cm -\sqrt{2\sigma}\frac{\de}{\de t}B^{i}(t)\\
&\hskip -0,3cm 0
\end{pmatrix},
\end{align*}
and we write equation \eqref{system1} as
\begin{align}\label{mod:system1}
\begin{aligned}
&\frac{\de}{\de t}y^{i}(t)= b_{\Lambda_{t}^{N}}(y^{i}(t))-W^{i}(t).
\end{aligned}
\end{align}
Let us notice that by \eqref{condizioneimp}, we have that our sequence $\{\Lambda_{t}^{N}\}_{t\in [0,T]}$ belongs to some ball $B_{R}^{\Y}$ for some positive radius $R$ independent of $N$. Since $U$ is assumed compact, we have that $B_{R}^{\Y}$ is compact, and then we have that $\Lambda_{t}$ converges weakly to some $\Lambda_{t}\in B_{R}^{\Y} $ as $N\rightarrow +\infty$. Then as $N\rightarrow +\infty$, we obtain the SDE

\begin{align}\label{eq:limite}
\begin{aligned}
&X(t)=\overline{X}_{0}+ \int_{0}^{t} v_{\Lambda_{t}}(X(s),\lambda(s))\de s + \sqrt{2\sigma}\overline{B}(t),\\
&\lambda(t)=\overline{\lambda}_{0}+ \int_{0}^{t}\T_{\Lambda_{t}}(X(s),\lambda(s))\de s,
\end{aligned}
\end{align}
where $X(0)=\overline{X}_{0}\in L^{2}(\Omega;\R^{d})$, and $\lambda(0)=\overline{\lambda}_{0}\in L^{2}(\Omega;\P(U))$, $\overline{B}$ is a suitable Brownian motions. We are now ready to prove Theorem \ref{thm:mfg}.
\begin{proof}[Proof of Theorem \ref{thm:mfg}]
 Let us take $r>0$ such that condition (C$1$) holds true, and let $\overline{\Lambda}\in \P(B_{r}^{\Y})$ be a fixed initial condition for \eqref{e:parabolic}.  In what follows, we start by proving item $(ii)$. Consider a sequence $\bar{\Lambda}^{N}$ of atomic measures such that 
 \begin{align*}
     \lim_{N\rightarrow+\infty}\W_{1}(\bar{\Lambda}^{N},\bar{\Lambda})=0.
 \end{align*}
 Notice that by letting $\bar{\Lambda}^{N}\coloneqq \frac{1}{N}\sum_{i=1}^{N}\delta_{y_{0}^{i}}$, where $y_{0}^{i}$ are the initial datum for
\begin{align*}
\dot{y}^{i}(t)= \hat{b}_{\Lambda^{i}}^{i}(y^{i}(t))-W(t),
\end{align*}
with
 \begin{align*}
     W(t)\coloneqq 
\begin{pmatrix}
&\hskip -0,3cm -\sqrt{2\sigma}\dot{B}^{i}(t)\\
&\hskip -0,3cm 0
\end{pmatrix},
 \end{align*}
by Theorem \ref{thm:aux}, we obtain that $\Lambda_{t}^{N}=(\overline{F}(t,0,\cdot))_{\#}\overline{\Lambda}^{N}$ is a Lagrangian and Eulerian solution to \eqref{e:parabolic} starting from $\overline{\Lambda}^{N}$. Then by Lemma \ref{lemma:estab}, we have that there exists a positive constant $C$ such that

 \begin{align}\label{benfatt}
     \W_{1}(\Lambda_{t}^{N},\Lambda_{t}^{M})\leq C\exp(tL_{R})\W_{1}(\bar{\Lambda}^{N},\overline{\Lambda}^{M})
 \end{align}
 for all $t\in [0,T]$, and all $N,M\in \N$. Moreover, by Lemma \ref{model:lemma}, we get that $\Lambda^{N}\in C([0,T];\P_{1}(B_{r}^{\Y},\W_{1}))$. Furthermore, since by assumption $\lim_{N\rightarrow +\infty}\W_{1}(\bar{\Lambda}^{N},\overline{\Lambda})=0$, and by Remark \ref{compacty} $\P(B_{r}^{\Y})$ is compact, by \eqref{benfatt} one obtains that the sequence $\{\Lambda_{t}^{N}\}_{N\in\N}$ $\W_{1}$-converges to some limit $\Lambda_{t}\in \P(B_{r}^{\Y})$ for each $t\in[0,T]$. We now prove that $\Lambda\coloneqq\lim_{N\rightarrow +\infty}\Lambda^{N}$ is a Lagrangian solution. Let us now consider $\dot{y}^{N}(t)= \hat{b}_{\Lambda^{N}}(y(t))-\bar{W}(t)$, and $\dot{y}(t)= \hat{b}_{\Lambda}(y(t))-\bar{W}(t)$ starting from some $\bar{y}\in B_{r}^{\Y}$, and where 
 \begin{align*}
     \overline{W}(t)\coloneqq 
\begin{pmatrix}
&\hskip -0,3cm -\sqrt{2\sigma}\frac{\de}{\de t}\overline{B}(t)\\
&\hskip -0,3cm 0
\end{pmatrix},
 \end{align*}
for some suitable Brownian motion $\overline{B}$. Notice that 

\begin{align*}
\norma{y^{N}(t)-y(t)}_{\Y}&\leq \int_{0}^{t}\norma{b_{\Lambda_{s}^{N}}(y^{N}(s))-b_{\Lambda_{s}}(y(s))}_{\Y}\de s\\
&\leq\int_{0}^{t}\norma{b_{\Lambda_{s}}(y^{N}(s))-b_{\Lambda_{s}}(y(s))}_{\Y}\de s+\\
&\phantom{formula}+\int_{0}^{t}\norma{b_{\Lambda_{s}^{N}}(y^{N}(s))-b_{\Lambda_{s}}(y^{N}(s))}_{\Y}\de s\\
&\leq L_{R}\int_{0}^{t}\norma{y^{N}(s)-y(s)}_{\Y}\de s + L_{R}\int_{0}^{t}\W_{1}(\Lambda_{s}^{N},\Lambda_{s})\de s.
\end{align*}
Then by Lemma \ref{lemma:estab}, there exists a positive constant $C$ such that
\begin{align*}
\norma{y^{N}(t)-y(t)}_{\Y} \leq   L_{R}\int_{0}^{t}\norma{y^{N}(s)-y(s)}_{\Y}\de s + L_{R}C\int_{0}^{t}\exp(sL_{R})\W_{1}(\overline{\Lambda}^{N},\overline{\Lambda})\de s,  
\end{align*}
and thus

\begin{align*}
 \norma{y^{N}(t)-y(t)}_{\Y}\leq C\exp(tL_{R})&\W_{1}(\overline{\Lambda}^{N},\overline{\Lambda}) +\\
 &+L_{R}\W_{1}(\overline{\Lambda}^{N},\overline{\Lambda})\exp(tL_{R})\int_{0}^{t}\exp(L_{R}(t-s))\de s\\
&\leq \exp(2tL_{R})(C+1)\W_{1}(\overline{\Lambda}^{N},\overline{\Lambda})
\end{align*}
and from here, we get by a comparison argument that
\begin{align*}
\norma{\overline{F}_{\Lambda^{N}}(t,0,\overline{y})-\overline{F}_{\Lambda}(t,0,\overline{y})}\leq \exp(2tL_{R})(C+1)\W_{1}(\overline{\Lambda}^{N},\overline{\Lambda}),
\end{align*}
and thus
\begin{align*}
\Lambda_{t}^{N}=(\overline{F}_{\Lambda^{N}}(t,0,\overline{y}))_{\#}\overline{\Lambda}^{N} \rightarrow  (\overline{F}_{\Lambda}(t,0,\overline{y}))_{\#}\overline{\Lambda}. 
\end{align*}
Hence by the uniqueness of Lagrangian solutions we get that $(\overline{F}_{\Lambda}(t,0,\overline{y}))_{\#}\overline{\Lambda}=\Lambda_{t}$. By Theorem \ref{thm:aux}, we conclude that $\Lambda$ is also a Eulerian solution, and we are done.
\end{proof}

\section{A mean-field model for a large network of interacting neurons}
\label{section4}

In this part, we consider a mean-field approach to study networks of spiking
neurons.  The time evolution, between two consecutive spikes, of the membrane potential $X^i(t)$ of a neuron $i$ embedded in a network, is described by a system of type \eqref{system2}. This model is of integrate-and-fire type, since the drift part accounts ({\it integrate}) for the inputs the neuron receives from its neighboring neurons. 
At each spiking time $\bar t$ ({\it fire}), that is when $X^i(\bar t)=X_F$ (firing threshold), the potential is reset to $X_R$ (resting potential) and all other neurons connected receive an additional potential depending on $\lambda^i$.
These inputs affect the membrane potential through the field $v$ that depends both on the actual state $X^i$ and on the coupling strength with the connected neurons. The dependence of $v$ on the state $X^i$ follows the principles of integrate-and-fire neuronal models with reversal potentials (see \cite{lansky1987diffusion}). The dependence of $v$ on $\lambda^i$, on the other hand, accounts for the different synaptic connection among neurons and could also include both inhibitory and excitatory contributions. Finally the dependence of the whole dynamics on the empirical measure $\Lambda$, which in this case represents the distribution of membrane potential values, can be used to model phenomena such as collective behavior, bursting, and avalanches, to name a few.

More precisely, we consider the system
\begin{align}\label{synap1}
\begin{dcases}
&X^{i}(t)=X_{0}^{i}+ \int_{0}^{t}v_{\Lambda_{s}^{N}}(X^{i}(s),\lambda^{i}(s))\de s+ \sqrt{2\sigma}B_{t}^{i},  \hskip 0,1cm \text{if $X^{i}(t^{-})<X_{F}$,}\\
&X^{i}(t)=X_{R}, \phantom{+\int_{0}^{t}v_{\Lambda_{s}^{N}}(X^{i}(s),\lambda^{i}(s))\de s+ \sqrt{2\sigma}B_{t}^{i}}\hskip 0,2cm \text{if $X^{i}(t^{-})=X_{F}$,}\\
&\lambda^{i}(t)=\lambda_{0}^{i}+\int_{0}^{t}\T_{\Lambda_{s}^{N}}(X^{i}(s),\lambda^{i}(s))\de s.
\end{dcases}
\end{align}

This model includes the feature that the interactions between the neurons are also subject to random synaptic weights \cite{delarue2019,faugeras2020asymptotic}.
Different scenarios can arise according to the form of $\T$.

In what follows, let us consider a function $\alpha:U\times \Y\rightarrow \R$, and define $\T:\Y\times \P_{1}(\Y) \rightarrow \F(U)$ as
\begin{align}\label{sec:newcho}
\begin{aligned}
&\T(y,\Psi)\coloneqq \int_{\Y}\alpha(\cdot,z)\de \Psi(z), \hskip 0,1cm \text{ such that} \hskip 0,1cm \int_{U}\int_{\Y}\alpha(u,z)\de \Psi(z)\de\mu(u)=0,\\
&\mu(u)=\sum_{k\geq 1}\beta_{k}\delta_{u_{k}}, \hskip0,1cm u_{k}\in U, \hskip 0,1cm \beta_{k}\in \R,
\end{aligned}
\end{align}
where $\mu\in \M(U)$ defines a signed measure. Furthermore, we consider a Lipschitz velocity field $b: \R^{d}\rightarrow \R^{d}$, and we define $v_{\T}:\Y\rightarrow \R^{d}$, $(x,\lambda)\mapsto b(x)$. Since the third equation of model \eqref{synap1} serves as a recipient to model the strength of the connection between neurons, we can think of $U$ as a discrete set that counts the number of neurons present in the brain. For the sake of simplicity, we may take $U=\Z^{d}$, endowed with a suitable metric that makes it compact.. By taking $\Psi =\Lambda_{t}^{N}$, $N\in\N$, we have that $\T(y,\Lambda_{t}^{N})$ in \eqref{sec:newcho} can be written as
\begin{equation}   
\label{tau_rsw}
\T(y,\Psi)=\frac{1}{N}\sum_{k\geq 1}\sum_{i=1}^{N}\beta_{k}\alpha(u_{k},X^{i}(t),\lambda^{i}(t)).
\end{equation}

 \begin{remark}
Expression \eqref{tau_rsw} 
 is of great generality and contains, as special cases, the random synaptic weights seen as constants, Bernoulli or Gaussian random variables  considered in previous works \cite{de2015hydrodynamic, delarue2019, eva_toymodel, faugeras2009constructive}.
 To support this claim, we can compare system \eqref{synap1} and \eqref{tau_rsw} with, for instance, the equation presented in \cite{delarue2019}
     \begin{equation} X^i(t)=X^i_0+\int_0^tb(X^i(s))ds+\sum_{j=1}^N\sum_{k\geq 1}J_N^{j\rightarrow i}1_{\{\tau_k^j\leq t\}}+W_t
 \end{equation}
 that combines the two equations for $X$ and $\lambda$ and where $\tau_k^j$ denotes the $k$-th spike time of  neuron $j$ and the synaptic weights $J_N^{j\rightarrow i}$ are defined in two representative cases:
 \begin{itemize}
 \item $J_N^{j\rightarrow i}=\alpha/N$, with $\alpha$ constant
 \item $J_N^{j\rightarrow i}=\beta\alpha^{i,j}/N$, with $\beta$ constant and $\alpha^{i,j} \sim Bern(p)$.
 \end{itemize}
 \end{remark}

Unlike \eqref{system2}, the dynamics of model \eqref{synap1} exhibit resets (jumps) corresponding to the spikes. Therefore, comments on the well-posedness of \eqref{synap1} are necessary and are provided in the following theorem.
In what follows, we state the main result regarding the existence and uniqueness of solutions for the system \eqref{synap1}. The hypotheses are essentially the same as those in the previous sections; however, we present the next result under the assumption that the fields $v,\T$ are globally Lipschitz.

\begin{theorem}\label{thm:neuron1}
Let us fix a filtered probability space $(\Omega,\B,\cP)$ endowed with a complete filtration $(\mathcal{F}_{t})_{t\in [0,T]}$ generated by the Brownian motion $(\bB(t))_{t\in[0,T]}$. Assume that $v_{\Psi}:\Y\rightarrow \R^{d}$, $(x,\lambda)\mapsto b(x)$, where $b:\R^{d}\rightarrow \R^{d}$ is a Lipschitz velocity field with Lipschitz constant $L_{b}$. Assume that $\T$ is defined as in \eqref{sec:newcho} where $\alpha$ is a measurable function with respect to $\B(U)\times \B(\Y)$, and it is Lipschitz in the second component with Lipschitz constant $L_{\alpha}$.  Furthermore, suppose that $X_{F}$ is a uniformly bounded random variable, that is $\sup_{\omega\in \Omega}X_{F}^{\omega}<+\infty$, and that  for all $i\in \N$, $(X_{R}, \Lambda^{i}(0))\in L^{2}(\Omega,\B,\P; \cE)$. Then the system \eqref{synap1} admits a unique strong solution.
\end{theorem}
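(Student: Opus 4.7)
The plan is to construct the solution of \eqref{synap1} iteratively on the sequence of random intervals determined by the spike times, invoking Theorem \ref{importantprop} on each piece between consecutive resets. First I would verify that, under the present hypotheses, the fields $v_{\Psi}(x,\lambda)\coloneqq b(x)$ and $\T$ defined by \eqref{sec:newcho} fit the framework of Section \ref{subsec3}. Since $b$ is globally Lipschitz, assumptions (A\ref{A1})-(A\ref{A3}) are immediate. For $\T$, the centering built into \eqref{sec:newcho} yields (B\ref{B1}); the Lipschitz character of $\alpha$ in its second argument, combined with the dual representation of $\W_{1}$ and the inequality $\norma{\cdot}_{{\rm BL}}\le \W_{1}$, gives the joint local Lipschitz estimate (B\ref{B3}); the sublinear growth (B\ref{B2}) and the positivity (B\ref{B4}) follow from the integral representation of $\T$ and the boundedness of $\alpha$ on bounded sets. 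Hence Theorem \ref{importantprop} is available on any interval where no spike occurs.

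Next, I would introduce the stopping time
\[
\tau_{1}\coloneqq \inf\{t\in [0,T]:\ \exists\, i\in\{1,\ldots,N\}\text{ with } X^{i}(t)=X_{F}\}\wedge T.
\]
Up to $\tau_{1}$ the dynamics coincide with those of \eqref{system2}, so Theorem \ref{importantprop} provides a unique strong solution on $[\![0,\tau_{1}]\!]$. On the event $\{\tau_{1}<T\}$, because the $(B^{i})_{i=1}^{N}$ are independent and the drift is Lipschitz, the random variables $X^{1}(\tau_{1}),\ldots,X^{N}(\tau_{1})$ admit joint densities, so almost surely exactly one neuron $i_{1}$ reaches $X_{F}$; I reset $X^{i_{1}}(\tau_{1})$ to $X_{R}$ while leaving the other spatial coordinates unchanged. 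The components $\lambda^{i}$ are continuous across $\tau_{1}$, being defined as integrals of bounded integrands. Using $X_{R}\in L^{2}$, the post-reset state lies in $L^{2}(\hat{\Y}^{N})$, so by the strong Markov property one can apply Theorem \ref{importantprop} again starting from $\tau_{1}$. Defining $\tau_{k+1}$ as the first firing time after $\tau_{k}$ and iterating, one builds the solution on each $[\![\tau_{k},\tau_{k+1}]\!]$.

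The central step is proving that $(\tau_{k})_{k}$ does not accumulate in $[0,T]$, so that the iteration exhausts $[0,T]$ almost surely. Right after a reset, the firing neuron restarts from $X_{R}$ and must cover the strictly positive gap $X_{F}-X_{R}$ before firing again. Since $b$ is bounded on bounded intervals and $X_{F}$ is uniformly bounded above by hypothesis, the first passage time of the diffusion $X_{R}+\int_{0}^{\cdot}b(X(s))\de s+\sqrt{2\sigma}\overline{B}(\cdot)$ to the level $X_{F}$ is stochastically bounded below by a Brownian hitting time with positive mean, and in particular
\[
\inf_{k}\cP(\tau_{k+1}-\tau_{k}\ge \delta\mid \F_{\tau_{k}})\ge p_{\delta}>0
\]
for some deterministic $\delta,p_{\delta}$ depending only on $b,\sigma,X_{F}-X_{R}$. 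A Borel-Cantelli / geometric trials argument then yields $\cP(\forall k,\,\tau_{k}<T)=0$, so almost surely the sequence $(\tau_{k})$ has only finitely many elements in $[0,T]$ and the construction extends up to $T$.

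Uniqueness finally follows by induction on the spike index. If $\by^{1},\by^{2}$ are two strong solutions of \eqref{synap1}, they agree on $[\![0,\tau_{1}]\!]$ by the uniqueness part of Theorem \ref{importantprop}; hence they share the first spike time and firing neuron, and applying the same argument successively on each $[\![\tau_{k},\tau_{k+1}]\!]$ yields $\by^{1}\equiv \by^{2}$ on $[0,T]$. The main obstacle I anticipate is the non-accumulation estimate, which needs to be uniform across all past histories; I would handle it by exploiting the Lipschitz bound on $b$, the strict positivity of $X_{F}-X_{R}$, and classical Gaussian lower bounds on the distribution of Brownian first-passage times, combined with the strong Markov property at each $\tau_{k}$.
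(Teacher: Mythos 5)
Your route is genuinely different from the paper's. The paper does not decompose along the spike times at all: it runs a single global Picard iteration on $[0,T]$, building the reset directly into the iteration scheme (the $(n+1)$-st iterate restarts from $\X_{R}$ on the event that the $n$-th iterate has reached $\X_{F}$), and then establishes the Cauchy estimate $\E\sup_{s\in[0,t]}\norma{\by_{n+1}(s)-\by_{n}(s)}^{2}\leq (Rt)^{n}/n!$ separately in the two regimes before optimizing over the two constants. Your concatenation along stopping times $\tau_{1}<\tau_{2}<\cdots$, applying Theorem \ref{importantprop} on each inter-spike interval via the strong Markov property, is the more classical construction for reset/jump SDEs; it buys a cleaner uniqueness argument (induction on the spike index) and it makes explicit the question of whether the spike times accumulate, which the paper's global iteration silently bypasses. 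Your preliminary verification that $v_{\Psi}(x,\lambda)=b(x)$ and the $\T$ of \eqref{sec:newcho} fit assumptions (A\ref{A1})--(A\ref{A3}) and (B\ref{B1})--(B\ref{B4}) is at the same level of detail as the paper's own treatment, so I do not count it against you.

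The non-accumulation step, however --- which you correctly identify as the crux --- contains a genuine flaw as written. The displayed bound $\inf_{k}\cP(\tau_{k+1}-\tau_{k}\geq \delta\mid \F_{\tau_{k}})\geq p_{\delta}>0$ concerns consecutive \emph{global} spike times, and this is not what your refractory-gap argument delivers: $\tau_{k}$ and $\tau_{k+1}$ may be spikes of two \emph{different} neurons, and nothing prevents two distinct neurons from firing arbitrarily close together, so no such uniform lower bound on $\tau_{k+1}-\tau_{k}$ can hold. What the argument actually controls is the inter-spike interval of a \emph{fixed} neuron $i$, since only the firing neuron is reset to $X_{R}$. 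The repair is standard but must be stated: if $(\tau_{k})_{k}$ accumulated at some $t^{\ast}\leq T$, then by pigeonhole some single neuron would fire infinitely often before $t^{\ast}$ with inter-spike gaps tending to zero, contradicting the per-neuron lower bound --- which does hold uniformly here precisely because $v_{\Psi}(x,\lambda)=b(x)$ makes each $X^{i}$ autonomous between its own resets, so the post-reset hitting times are governed by fresh Brownian increments. You should also flag that this step requires $X_{F}-X_{R}$ to be almost surely bounded away from zero; this is implicit in the model but is not among the stated hypotheses ($X_{F}$ is only assumed bounded and $X_{R}\in L^{2}$). Finally, the claim that almost surely exactly one neuron fires at $\tau_{1}$ is asserted without proof, but it is inessential: one may simply reset every neuron sitting at the threshold at that instant.
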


\begin{proof}
In what follows, we use the same notation, and follow the argument used during the proof of Theorem \ref{importantprop}. Since by hypothesis  $\cP(X_{F}=+\infty)=0$, then we can take $\omega\in \Omega$ such that $X_{F}^{\omega}<+\infty$. We then propose the following Picard iteration. For $n\in\N\backslash \{0\}$,  with an abuse of notation, we set

\begin{align*}
\begin{dcases}
&\X_{n+1}(t)=\X_{0}+ \int_{0}^{t}\v_{\Lambda_{s}}^{N}(\X_{n}(s),\blambda_{n}(s))\de s+ \sqrt{2\sigma}\bB(t),\hskip 0,1cm \text{if $\X_{n}(t^{-})<\X_{F}$,}\\
&\X_{n+1}(t)=\X_{R}+ \int_{0}^{t}\v_{\Lambda_{s}}^{N}(\X_{R}(s),\blambda_{n}(s))\de s+ \sqrt{2\sigma}\bB(t),\hskip 0,1cm \text{if $\X_{n}(t^{-})=\X_{F}$,}\\
&\blambda_{n+1}(t)=e^{-\frac{t}{\theta}}\blambda_{0}+ \frac{1}{\theta}\int_{0}^{t}e^{\frac{s-t}{\theta}}g_{\Lambda_{s}}(\X_{n}(s),\blambda_{n}(s))\de s,\\
&\X_{0}\coloneqq \overline{\X}_{0},\\
&\blambda_{0}\coloneqq \overline{\blambda}_{0},
\end{dcases}
\end{align*}
where $\X_{R}\coloneqq \underset{N-times}{\underbrace{(X_{R},\ldots, X_{R})}}$, and $\X_{F}\coloneqq \underset{N-times}{\underbrace{(X_{F},\ldots, X_{F})}}$, and we have borrowed the notation used in \eqref{Picard1}. Notice that we have indicated $\X_{n}(t^{-})<\X_{F}$ meaning that each component satisfies such an inequality.  We then proceed as in proof of Theorem \ref{importantprop}. Nevertheless, we need to consider two cases. First, we need to consider when ${\bf X}_{n}(t^{-})<X_{F}$, for fixed $n\in\N$. In that cases, we need to show that there exists a constant $R>0$ such that
\begin{align}\label{new:dis1}
\E\left(\sup_{s\in [0,t]}\norma{\by_{n+1}(s)- \by_{n}(s))}^{2}\right)\leq \frac{(Rt)^{n}}{n!}
\end{align}
for all $n\in \N$, and all $t\in [0,T]$. Second, suppose that ${\bf X}_{n}(t^{-})=X_{F}$. Hence, we possibly find a further positive constant $\hat{R}$ such that \eqref{new:dis1} holds true. Then, we optimize between $R$ and $\hat{R}$ to find a new constant $R$ for which \eqref{new:dis1} holds true. From here, we may proceed with the proof of our desired result by following the same reasoning as in the proof of Theorem \ref{importantprop}.
\end{proof}

\begin{remark}
    We proved Theorem \ref{thm:neuron1} within a general framework in which 
$X_F$  is a bounded random variable. However, in most neuronal models, the spiking threshold is a physiological parameter and is typically treated as a constant.
\end{remark}

Having shown the existence of the solution, let us now show two pictures to illustrate the qualitative behavior of  \eqref{synap1}  in a simple case. In Figure \ref{fig:X_N_100}, we observe the dynamics of $X^{i}(t)$ in the one-dimensional case for $N=1000$, and where we have chosen $X_{F}=0.7$, $X_{R}=0.01$. To provide insights into the behavior of \eqref{synap1}, we considered a linear system of the form $v_{\Lambda_{t}^{N}}(X^{i}(t),\lambda^{i}(t))= a\cdot X^{i}(t)+ b\cdot\lambda^{i}(t) + c\cdot \Lambda_{t}^{i}$, $\T_{\Lambda_{t}^{N}}(X^{i}(t),\lambda^{i}(t)) = d\cdot X^{i}(t) + e\cdot\lambda^{i}(t) + f\cdot \Lambda_{t}^{N}-\E(d\cdot X^{i}(t) + e\cdot\lambda^{i}(t) + f\cdot \Lambda_{t}^{N})$, with $a = 0.5$, $b = 0.3$, $c = 0.2$; $d = 0.4$, $e = 0.2$, $f = 0.1$ (we omit the units since this is just an illustrative example). Notice that whenever $X^{i}(t)=X_{R}$, $\lambda^{i}(t)$ exhibits singularities in its trajectories.

\begin{figure}[htbp!]
    \centering
    \begin{subfigure}{0.45\textwidth}
        \includegraphics[width=\linewidth]{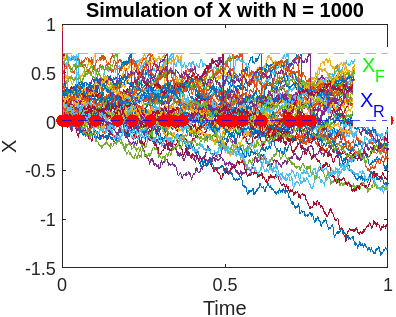}
        \caption{Trajectories of $X^{i}$ with N = 1000}
    \end{subfigure}
    \hspace{0.5cm}
    \begin{subfigure}{0.45\textwidth}
        \includegraphics[width=\linewidth]{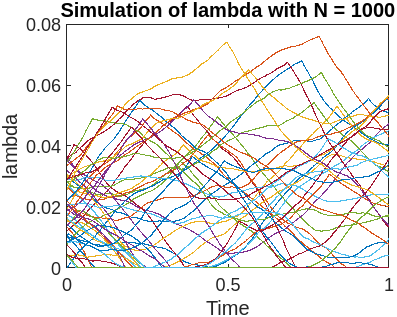}
        \caption{Trajectories of $\lambda^{i}$ with N = 1000}
    \end{subfigure}
    \caption{Trajectories for $X$, and $\lambda$ with $N=1000$. }
    \label{fig:X_N_100}
\end{figure}


\subsection{Heterogeneous population of interacting neurons}\label{sec:evol1}
In this section, we consider a more general version of the model studied in the previous sections. In particular, we include in the second equation of \eqref{system1} an additional noise.

This network can be interpreted as a collection of neurons with different characteristics (resulting for instance in a different firing rate)
and therefore, it is a model of an heterogeneous
network. The weights distributions in cortex have been observed to be broad and neurons can belong to different subgroups with different behaviors. Moreover, the variance of the temporal fluctuations of the input currents may be large enough to endow the network with its own source of variability (Section 6.4.2 in \cite{la2021mean}).

For these reasons we consider the system 

\begin{align}\label{evol:system1}
\begin{dcases}
&X^{i}(t)=X_{0}^{i}+ \int_{0}^{t}v_{\Lambda_{s}^{N}}(X^{i}(s),\lambda^{i}(s))\de s+ \sqrt{2\sigma}B^{i}(t),\\
&\lambda^{i}(t)=\lambda_{0}^{i}+\int_{0}^{t}\T_{\Lambda_{s}^{N}}\left(X^{i}(s),\lambda^{i}(s)+ R(s)\right)\de s ,\\
&R(t)\coloneqq \sum_{h\in \N}a_{h}W_{h}(t)e_{h},
\end{dcases}
\end{align}
where $(W_{h}(t))_{h\in \N}$ is a sequence of independent Brownian motions, which are also independent of $B^{i}$ for all $i\in \N$. Here, $(a_{h})_{h\in\N}$  is a sequence of non-negative numbers such that $\sum_{h}a_{h}^{2}<+\infty$, and $(e_{h})_{h}$ is a sequence of signed measures supported in $U$ such that $\int_{U}\de e_{h}(u)=0$ for all $h\in\N$. The existence of strong solutions can be treated by following the same approach of Theorem \ref{importantprop}. Indeed, we have the following result.

\begin{proposition}
Let us fix a filtered probability space $(\Omega,\B,\cP)$ endowed with a complete filtration $(\mathcal{F}_{t})_{t\in [0,T]}$ generated by the Brownian motion $(B(t))_{t\in[0,T]}$. Assume that for every $y\in \Y$ and $\Psi\in \P_{1}(\Y)$ the velocity field $v_{\Psi}:\Y\rightarrow \R^{d}$ satisfies (A\ref{A1})-(A\ref{A3}), and the operator $\T_{\Psi}$ as defined in \eqref{sec:newcho} satisfies (B\ref{B1})-(B\ref{B4}). Moreover, let us consider $(\lambda_{h})_{h\in\N}$ be a sequence of non-negative numbers such that $\sum_{h}\lambda_{h}^{2}<+\infty$, and let $(W_{h}(t))_{h\in \N}$, $t\in [0,T]$ a sequence of independent real-valued Brownian motions, such that for all $t\in [0,T]$, $R(t)\in \M_{0}(U)$. Then, for every choice $Y_{0}\coloneqq(\overline{X}_{0}, \overline{\lambda}_{0})$ as initial condition of \eqref{evol:system1} such that $Y_{0}\in L^{2}(\hat{\Y})$, the system \eqref{evol:system1} has a unique solution $Y$ that belongs to $\M_{{\rm ad}}^{2}(\hat{\Y})$.
\end{proposition}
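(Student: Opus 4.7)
The plan is to mimic the Picard iteration scheme used to prove Theorem \ref{importantprop}, with the extra work consisting in controlling the infinite-dimensional noise $R(t)$ and then checking that it does not spoil the contraction estimate. First I would establish the basic analytic properties of $R(t)$. Under the hypothesis $\sum_h a_h^2 < +\infty$ and the tacit assumption that the signed measures $(e_h)$ have uniformly bounded $\norma{\cdot}_{{\rm BL}}$ norm (inherited from $(e_h)$ being a \emph{proper} sequence in $\M_0(U)$), the series defining $R(t)$ converges in $L^2(\Omega;\F(U))$, has a continuous $\F(U)$-valued modification (by Kolmogorov), and satisfies the uniform bound $\E\,\norma{R(t)}_{{\rm BL}}^2 \leq C\,t\sum_h a_h^2 \norma{e_h}_{{\rm BL}}^2 \leq C'$ for $t\in[0,T]$. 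Moreover $R(t)\in \M_0(U)$ a.s.\ by the zero-mean assumption on the $(e_h)$, so that $\lambda^i(t)+R(t)$ is a signed measure of total mass one, lying in $\F(U)$ and to which the formula \eqref{sec:newcho} extends by linearity.

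Next I would repeat the Picard construction of \eqref{Picard1}, replacing the second component of the driver by
\begin{equation*}
g_{\Lambda_t}(\X_n(t),\blambda_n(t))\coloneqq \blambda_n(t)+\theta\,\bT^N_{\Lambda_t}(\X_n(t),\blambda_n(t)+\R(t)),
\end{equation*}
where $\R(t)\coloneqq(R(t),\ldots,R(t))$ and $\theta>0$ is fixed small enough so that, using (B\ref{B4}) and the $L^2$-bound on $R$, $g_{\Lambda_t}(\X_n,\blambda_n)$ remains in $(\P(U))^N$ with probability one on the relevant set, up to the same convex-combination argument as in the proof of Theorem \ref{importantprop}. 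Corollary \ref{Cor:MS} applied pathwise (in $\omega$) still yields a well-defined iterate $\by_{n+1}$, predictable and in $\M^2_{\rm ad}(\hat{\Y}^N)$, because the extra term $\R(t)$ is adapted and its $L^2$-norm enters the a priori bounds only through an additive constant $C(T,\{a_h\},\{e_h\})$.

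The decisive step is the Cauchy estimate. When one subtracts two consecutive iterates, the additive noise $\R(s)$ appears inside $\bT_{\Lambda_s}^N$ in both terms with the same value, so by (B\ref{B2})-(B\ref{B3}) extended to signed measures (which is immediate for the particular $\T$ of \eqref{sec:newcho}, since it is affine in the $\Psi$-argument and the ${\rm BL}$-norm controls the relevant integrals) one has
\begin{equation*}
\norma{\bT_{\Lambda_s^N}^N(\X_n,\blambda_n+\R(s))-\bT_{\Lambda_s^N}^N(\X_{n-1},\blambda_{n-1}+\R(s))}_{(\F(U))^N}\leq L_R\norma{\by_n(s)-\by_{n-1}(s)}_{\hat{\Y}^N}.
\end{equation*}
Consequently the inductive inequality $\E \sup_{s\leq t}\norma{\by_{n+1}(s)-\by_n(s)}_{\hat{\Y}^N}^2\leq (Mt)^{n+1}/(n+1)!$ is obtained exactly as in the proof of Theorem \ref{importantprop}, and Borel–Cantelli combined with Markov's inequality delivers almost sure uniform convergence of $\by_n$ to a limit $\by\in \M^2_{\rm ad}(\hat{\Y}^N)$. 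Undoing the regularization $\theta$ by the same ODE argument as at the end of the proof of Theorem \ref{importantprop} (checking that $\phi(t)\coloneqq \blambda(t)-\overline{\blambda}(t)$ satisfies $\phi'\equiv 0$ with $\phi(0)=0$) shows that $\by$ solves \eqref{evol:system1}; uniqueness follows at once from the same Lipschitz estimate applied to two solutions.

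The main obstacle I anticipate is precisely the verification that $\lambda^i(t)+R(t)$ is an admissible argument of $\T_{\Lambda_s^N}$, since a priori this signed measure has mass one but need not be non-negative; this forces one to read the Lipschitz/growth assumptions (B\ref{B1})-(B\ref{B4}) as statements in $\F(U)$ rather than in $\P(U)$. For the specific $\T$ given in \eqref{sec:newcho} this extension is transparent because the operator depends only linearly and boundedly on the second argument through $\Psi$, and the ${\rm BL}$-norm of the perturbation $R(s)$ provides the required quantitative control; for a generic operator satisfying (B\ref{B1})-(B\ref{B4}) one would additionally have to assume that the extension to a neighbourhood of $\P(U)$ in $\F(U)$ keeps the same Lipschitz constants, which is what makes the present proposition essentially a corollary of Theorem \ref{importantprop}.
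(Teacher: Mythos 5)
Your proposal matches the paper's proof: the paper likewise defines $\overline{\T}_{\Lambda_{t}^{N}}(X^{i}(t),\lambda^{i}(t))\coloneqq \T_{\Lambda_{t}^{N}}(X^{i}(t),\lambda^{i}(t)+R(t))$, observes that this perturbed operator still satisfies (B1)--(B4) for almost every $\omega$, and then reruns the Picard iteration of Theorem \ref{importantprop} with $g_{\Lambda_t}$ replaced by the correspondingly modified $\overline{g}_{\Lambda_t}$. The extra care you take over the convergence of the series defining $R(t)$ and over the fact that $\lambda^{i}(t)+R(t)$ is only a signed measure of mass one is detail the paper compresses into a single sentence, but the route is the same.
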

\begin{proof}
Notice that for almost all $\omega\in\Omega$, $\T_{\Lambda_{t}^{N}}(X^{i}(t),\lambda^{i}(t)+ R(t))$ satisfies our assumptions (B\ref{B1})-(B\ref{B4}). Therefore, we may apply Theorem \ref{importantprop} by replacing $\T_{\Lambda_{t}^{N}}$ with $\overline{\T}_{\Lambda_{t}^{N}}(X^{i}(t),\lambda^{i}(t))\coloneqq \T_{\Lambda_{t}^{N}}(X^{i}(t),\lambda^{i}(t)+ R(t))$, and where \eqref{evol:system1} can be written as

\begin{align*}
\begin{dcases}
&X^{i}(t)=X_{0}^{i}+ \int_{0}^{t}v_{\Lambda_{s}^{N}}(X^{i}(s),\lambda^{i}(s))\de s+ \sqrt{2\sigma}B^{i}(t),\\
&\lambda^{i}(t)=\lambda_{0}^{i}+\int_{0}^{t}\overline{\T}_{\Lambda_{t}^{N}}(X^{i}(s),\lambda^{i}(s))\de s.\\
\end{dcases}
\end{align*}
Moreover, in order to apply the Picard iteration used in Theorem \ref{importantprop}, we define $\overline{g}_{\Lambda_{t}}(X_{n}^{i}(t),\lambda_{n}^{i}(t))\coloneqq \lambda_{n}^{i}(t)+ \theta \overline{\T}_{\Lambda_{t}^{N}}(X_{n}^{i}(t),\lambda_{n}^{i}(t))$ for some suitable $\theta>0$. Then, we consider the Picard iteration \eqref{Picard1} with $g_{\Lambda_{t}}$ replaced by $\overline{g}_{\Lambda_{t}}$. That is, we have

\begin{align*}
\begin{dcases}
&X_{n+1}^{i}(t)=X_{0}^{i}+ \int_{0}^{t}\v_{\Lambda_{s}}^{N}(X_{n}^{i}(s),\lambda_{n}^{i}(s))\de s+ \sqrt{2\sigma}B^{i}(t),\\
&\lambda_{n+1}^{i}(t)=e^{-\frac{t}{\theta}}\lambda_{0}+ \frac{1}{\theta}\int_{0}^{t}e^{\frac{s-t}{\theta}}\overline{g}_{\Lambda_{s}}(X_{n}^{i}(s),\lambda_{n}^{i}(s))\de s,\\
\end{dcases}
\end{align*}
and thus we proceed by following the reasoning of Theorem \ref{importantprop}, and our conclusion follows.
\end{proof}

\section{Possible future directions}
Notice that for equations of the form \eqref{system2}, we have assumed that hypothesis C1 holds true to derive the existence and uniqueness of Lagrangian solutions. In the case of equations of the form \eqref{synap1}, we are unable to formulate such a condition in a manner that takes into account the effect of the random variable $X_{F}$, which controls the spike times of $X$. However, since simulations show that the trajectories of $\lambda$ are $C^{0}$ and the trajectories of $X$ are almost the same as those we can simulate through \eqref{system1}, we conjecture that it is possible to obtain the existence of both Lagrangian and Eulerian solutions. Nonetheless, this remains a point for further investigation. Additionally, we would like to reformulate all the results by relaxing assumption (C1). 
Moreover, we seek to analyze the asymptotic behavior of a system of particles as time diverges. More precisely, we are interested in whether the continuity equation governed by the operator in assumption (C1) admits a stationary solution. In this aspect we intend to compare our analysis with results such as those in \cite{pakdaman2009dynamics}. Specifically, we aim to determine a decay rate for the family of probability measures ${\Lambda_{t}^{N}}$ using a Doeblin-type condition; see, for instance, \cite[Theorem 2.3]{canizo2019asymptotic}.

\appendix
\section{Well-posedness of ODEs in Banach spaces}
Let us recall the following Theorem by Brezis \cite[Theorem 1.4]{Brezis1973} about the well-posedness of ODEs in Banach spaces.

\begin{theorem}[\cite{Brezis1973}]\label{Thm:Brezis}
Let $(E,\norma{\cdot}_{E})$ be a Banach space, $C$ a closed convex subset of $E$, and let $A(t,\cdot):C\rightarrow E$, $t\in[0,T]$, be a family  of operators satisfying the following properties:

\begin{enumerate}
\item[(i)] there exists a constant $L\geq 0$ such that for every $c_{1},c_{2}\in C$ and $t\in[0,T]$
\begin{align*}
\norma{A(t,c_{1})-A(t,c_{2})}_{E}\leq L\norma{c_{1}-c_{2}}_{E};
\end{align*}
\item[(ii)] for every $c\in C$ the map $t\mapsto A(t,c)$ is continuous in [0,T];
\item[(iii)] for every $R>0$ there exists $\theta>0$ such that

\begin{align*}
c\in C,\hskip0,1cm \norma{c}_{E}\leq R\Rightarrow \hskip0,1cm c+\theta A(t,c)\in C.
\end{align*}
\end{enumerate}
Then for every $\bar{c}\in C$ there exists a unique curve $c:[0,T]\rightarrow C$ of class $C^{1}$ satisfying $c_{t}\in C$ for all $t\in [0,T]$ and

\begin{align*}
\frac{\de}{\de t}c_{t}=A(t,c_{t}) \hskip0,1cm \text{in $[0,T]$, $c_{0}=\bar{c}$.}
\end{align*}
Moreover, if $c^{1},c^{2}$ are the solutions starting from the initial data $\bar{c}^{1}, \bar{c}^{2}\in C$, respectively, we have

\begin{align*}
\norma{c_{t}^{1}-c_{t}^{2}}\leq {\rm e}^{Lt}\norma{\bar{c}^{1}-\bar{c}^{2}}_{E}\hskip 0,1cm \text{for every $t\in [0,T]$.}
\end{align*}
\end{theorem}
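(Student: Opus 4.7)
The plan is to recast the ODE $\dot c = A(t,c)$ as an equivalent ``resolvent-form'' integral equation whose natural Picard iteration automatically preserves the constraint $c(t) \in C$, and then to apply Banach's fixed-point theorem in a suitable weighted norm. The observation driving everything is that (iii) together with convexity of $C$ lets a Bochner averaging kernel produce a genuine convex combination of elements of $C$.

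First I would rewrite the equation as $\dot c + \tfrac{1}{\theta} c = \tfrac{1}{\theta}(c + \theta A(t,c))$, with $\theta > 0$ chosen as in (iii), and pass to the equivalent integral equation
\begin{equation*}
c(t) = e^{-t/\theta}\bar c + \frac{1}{\theta}\int_0^t e^{(s-t)/\theta}\bigl(c(s) + \theta A(s, c(s))\bigr)\,ds.
\end{equation*}
Note that (i) is global on $C$, so no a priori boundedness is needed. I then set
\begin{equation*}
X := \bigl\{c \in C^0([0,T]; E) : c(t) \in C \text{ for all } t,\ c(0) = \bar c\bigr\},
\end{equation*}
which is complete for the weighted norm $\|c\|_\mu := \sup_{t \in [0,T]} e^{-\mu t}\|c(t)\|_E$ because $C$ is closed. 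Defining $\Phi : X \to C^0([0,T];E)$ by the right-hand side of the integral equation is legitimate since (i)--(ii) make $s \mapsto c(s) + \theta A(s, c(s))$ continuous, hence Bochner-integrable.

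The key step is showing $\Phi(X) \subset X$. By (iii), $c(s) + \theta A(s, c(s)) \in C$ for every $s$. Using $\tfrac{1}{\theta}\int_0^t e^{(s-t)/\theta}\,ds = 1 - e^{-t/\theta}$, I rewrite
\begin{equation*}
\Phi(c)(t) = e^{-t/\theta}\bar c + (1 - e^{-t/\theta})\,\gamma(t), \quad \gamma(t) := \frac{\int_0^t e^{(s-t)/\theta}\bigl(c(s) + \theta A(s, c(s))\bigr)\,ds}{\int_0^t e^{(s-t)/\theta}\, ds},
\end{equation*}
so that $\gamma(t) \in C$ (a probability-weighted Bochner average of a continuous $C$-valued curve, and $C$ closed convex), and $\Phi(c)(t)$ is a convex combination of $\bar c \in C$ and $\gamma(t) \in C$. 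For the contraction estimate, (i) gives
\begin{equation*}
\|\Phi(c_1)(t) - \Phi(c_2)(t)\|_E \leq \frac{1 + \theta L}{\theta}\int_0^t e^{(s-t)/\theta}\|c_1(s) - c_2(s)\|_E\, ds,
\end{equation*}
and multiplying by $e^{-\mu t}$ and integrating yields $\|\Phi(c_1) - \Phi(c_2)\|_\mu \leq \tfrac{1 + \theta L}{1 + \theta \mu}\|c_1 - c_2\|_\mu$, which is a strict contraction for $\mu > L$. Banach's theorem then gives a unique $c \in X$; differentiating the integral equation shows $c \in C^1([0,T]; E)$ and solves the Cauchy problem.

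For the stability estimate, I subtract the two ODEs for solutions $c^1, c^2$ with data $\bar c^1, \bar c^2$, apply (i) to get
\begin{equation*}
\|c^1(t) - c^2(t)\|_E \leq \|\bar c^1 - \bar c^2\|_E + L\int_0^t \|c^1(s) - c^2(s)\|_E\, ds,
\end{equation*}
and invoke Grönwall to obtain $\|c^1_t - c^2_t\|_E \leq e^{Lt}\|\bar c^1 - \bar c^2\|_E$. Taking $\bar c^1 = \bar c^2$ yields uniqueness within the full class of $C$-valued $C^1$ curves, which also upgrades the uniqueness from Banach's theorem (within $X$) to uniqueness in the broader class. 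The delicate point, which must be verified carefully, is the invariance in Step 3: naïve Picard iteration $c^{n+1}(t) = \bar c + \int_0^t A(s, c^n(s))\,ds$ need not preserve $c(t) \in C$, and the resolvent kernel $\tfrac{1}{\theta}e^{(s-t)/\theta}$ is precisely the device that converts the iteration into an honest convex combination of elements of $C$, exploiting both (iii) and closed convexity simultaneously.
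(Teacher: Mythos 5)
The paper does not prove this statement: it is quoted verbatim from Br\'ezis \cite{Brezis1973} as a known result, so there is no internal proof to compare against. Your strategy --- rewriting $\dot c=A(t,c)$ in the resolvent form $c(t)=e^{-t/\theta}\bar c+\tfrac1\theta\int_0^t e^{(s-t)/\theta}(c(s)+\theta A(s,c(s)))\,\de s$ so that the Duhamel kernel turns the iteration into a convex combination of elements of $C$ --- is exactly the classical device, and it is the same trick the paper itself borrows for the Picard scheme \eqref{Picard1} in the proof of Theorem \ref{importantprop}. The contraction estimate in the weighted norm, the recovery of $C^1$ regularity by differentiating the integral equation, and the Gr\"onwall stability bound are all correct as written.

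There is, however, one genuine gap. Assumption (iii) is \emph{local}: it provides a $\theta>0$ only for those $c\in C$ with $\|c\|_E\le R$, and $\theta$ depends on $R$. You fix ``$\theta$ chosen as in (iii)'' without specifying $R$, and you explicitly assert that ``no a priori boundedness is needed''; that claim conflates the globality of the Lipschitz condition (i) with the locality of (iii). As set up, your space $X$ contains curves of arbitrarily large sup-norm, so for a given $c\in X$ the inclusion $c(s)+\theta A(s,c(s))\in C$ is not justified for the single $\theta$ appearing in your kernel --- which is precisely the step your whole invariance argument rests on. The repair is standard but must be done: use (i), (ii) and Gr\"onwall to derive an a priori bound $\|c(t)\|_E\le\rho(t)\le R_0$ (with $\rho'=M_0+L(\rho+\|\bar c\|_E)$, $M_0:=\sup_t\|A(t,\bar c)\|_E$) valid for any prospective solution, fix $\theta=\theta(R_0)$ from (iii), restrict $X$ to curves satisfying $\|c(t)\|_E\le\rho(t)$, and check that $\Phi$ preserves this tube (a short comparison argument, since $u:=h-\rho$ solves $u'=-u/\theta$, $u(0)=0$ for the relevant majorant $h$). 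With that insertion the proof closes; without it, the fixed-point map is not well defined on the domain you propose.
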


In what follows, we consider the following generalization of the previous result proved in \cite[Corollary 2.3]{MS2020}.

\begin{corollary}[{\cite[Corollary 2.3]{MS2020}}]\label{Cor:MS}
Let hypotheses $(ii)$ and $(iii)$ of Theorem \ref{Thm:Brezis} hold for a family of operators $A(t,\cdot): C\rightarrow E$, $t\in [0,T]$. Assume in addition, that

\begin{enumerate}
\item[($i^\prime$)] for every $R>0$ there exists a constant $L_{R}\geq 0$ such that for every $c_{1},c_{2}\in C\cap B_{R}$ and $t\in[0,T]$,

\begin{align*}
\norma{A(t,c_{1})-A(t,c_{2})}_{E}\leq L_{R}\norma{c_{1}-c_{2}}_{E};
\end{align*}
\item[($ii^\prime$)] there exists $M>0$ such that for every $c\in C$, there holds

\begin{align*}
\norma{A(t,c)}_{E}\leq M(1+\norma{c}_{E}).
\end{align*}
Then for every $\bar{c}\in C$ there exists a unique curve $c:[0,T]\rightarrow C$ of class $C^{1}$ satisfying $c_{t}\in C$ for all $t\in [0,T]$, and
\begin{align*}
\frac{\de}{\de t}c_{t}=A(t,c_{t}) \hskip 0,1cm \text{in $[0,T]$,  $c_{0}=\bar{c}$.}
\end{align*}
Moreover, if $c^{1}, c^{2}$ are the solutions starting from the initial data $\bar{c}^{1}, \bar{c}^{2}\in C\cap B_{R}$, respectively, there exists a constant $L=L(M,R,T)>0$ such that

\begin{align*}
\norma{c_{t}^{1}-c_{t}^{2}}\leq {\rm e}^{Lt}\norma{\bar{c}^{1}-\bar{c}^{2}}_{E}\hskip 0,1cm \text{for every $t\in [0,T]$.}
\end{align*}

\end{enumerate}
\end{corollary}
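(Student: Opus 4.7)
The plan is to upgrade Theorem~\ref{Thm:Brezis} from its globally-Lipschitz hypothesis~(i) to the local version~($i'$) by using the sublinearity bound~($ii'$) to produce an a~priori bound on any candidate solution, and then to reduce to Theorem~\ref{Thm:Brezis} via a suitable truncation of $A$. First, I would integrate the ODE and apply~($ii'$): if $c:[0,T]\to C$ is any $C^{1}$ solution with $\|\bar c\|_{E}\leq R_{0}$, then
\begin{align*}
\|c_{t}\|_{E}\leq R_{0}+M\int_{0}^{t}\bigl(1+\|c_{s}\|_{E}\bigr)\,\mathrm{d}s,
\end{align*}
so Gr\"onwall yields $\|c_{t}\|_{E}\leq (R_{0}+MT)\,\mathrm{e}^{MT}=:R_{*}$ uniformly in $t\in[0,T]$. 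This a priori bound decouples the field's behavior outside $B_{R_{*}}$ from the dynamics we care about.

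Next, fix a real-valued Lipschitz cut-off $\chi:E\to[0,1]$ with $\chi\equiv 1$ on $B_{R_{*}+1}$ and $\chi\equiv 0$ outside $B_{R_{*}+2}$, and define the modified family $A^{*}(t,c):=\chi(c)\,A(t,c)$. I would verify that $A^{*}$ satisfies the hypotheses~(i)--(iii) of Theorem~\ref{Thm:Brezis}: hypothesis~(i) follows by combining the Lipschitz estimate for $A$ on $C\cap B_{R_{*}+2}$ from~($i'$) with the uniform bound $\|A(t,c)\|_{E}\leq M(2+R_{*})$ from~($ii'$) on that ball and the Lipschitz constant of $\chi$; hypothesis~(ii) is inherited from~$A$ since $\chi$ is time-independent. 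For hypothesis~(iii), given $c\in C$ with $\|c\|_{E}\leq R$, choose $R'=\max\{R,R_{*}+2\}$ and pick the $\theta_{0}$ produced by the original~(iii) applied at radius $R'$; since $c+\theta\chi(c)A(t,c)$ is the convex combination $(1-\chi(c)\theta/\theta_{0})\,c+(\chi(c)\theta/\theta_{0})\bigl(c+\theta_{0}A(t,c)\bigr)$, both endpoints lying in $C$, the point remains in $C$ as long as $\theta\leq\theta_{0}$.

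Applying Theorem~\ref{Thm:Brezis} to $A^{*}$ delivers a unique $C^{1}$ curve $c^{*}:[0,T]\to C$ with $c^{*}_{0}=\bar c$. To recover a solution of the original ODE, define $\tau:=\inf\{t\in[0,T]:\|c^{*}_{t}\|_{E}>R_{*}+1\}$. On $[0,\tau)$ one has $\chi(c^{*}_{t})=1$, so $c^{*}$ satisfies $\mathrm{d}c^{*}/\mathrm{d}t=A(t,c^{*})$ there; the a priori bound of the first paragraph forces $\|c^{*}_{t}\|_{E}\leq R_{*}$ on $[0,\tau)$, contradicting $\tau<T$, hence $\tau=T$ and $c^{*}$ solves the original equation on $[0,T]$. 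For uniqueness and stability, if $c^{1},c^{2}$ are solutions with $\|\bar c^{i}\|_{E}\leq R$, both remain in $B_{R_{*}}$ by Step~1, $A$ is Lipschitz with constant $L_{R_{*}}$ there by~($i'$), and subtracting the integral equations yields
\begin{align*}
\|c_{t}^{1}-c_{t}^{2}\|_{E}\leq \|\bar c^{1}-\bar c^{2}\|_{E}+L_{R_{*}}\int_{0}^{t}\|c_{s}^{1}-c_{s}^{2}\|_{E}\,\mathrm{d}s,
\end{align*}
so Gr\"onwall produces the stated bound with $L=L_{R_{*}}=L(M,R,T)$.

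The main obstacle is the verification of the sub-tangential condition~(iii) for the truncated field $A^{*}$: the cut-off $\chi$ and the closed convex set $C$ interact nontrivially, and a naive truncation could push $c+\theta A^{*}(t,c)$ out of $C$. The convexity of $C$ is essential to close this gap via the ``segment'' argument above. An equivalent route, perhaps slightly cleaner conceptually, would be to work directly on the closed convex set $C\cap B_{R_{*}+1}$ and to enforce invariance by choosing $\theta$ so small that the sublinear bound on $\|A\|$ prevents escape from the ball; either way, controlling geometry versus truncation is the technical heart of the proof.
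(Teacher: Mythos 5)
The paper does not actually prove this corollary --- it imports it verbatim from \cite[Corollary 2.3]{MS2020} --- so there is no in-paper proof to compare against; judged on its own, your argument is correct and complete. The overall scheme (a priori bound from ($ii'$) via Gr\"onwall, truncation of $A$ by a Lipschitz cut-off so that Theorem \ref{Thm:Brezis} applies, then identification of the truncated solution with a solution of the original problem, and finally uniqueness/stability by a local-Lipschitz Gr\"onwall estimate inside $B_{R_{*}}$) is the standard way to pass from the globally Lipschitz to the locally Lipschitz-plus-sublinear setting, and each step checks out. In particular, you correctly handle the one genuinely delicate point, namely that hypothesis (iii) of Theorem \ref{Thm:Brezis} only provides a single $\theta_{0}$ rather than all smaller values: writing $c+\theta\chi(c)A(t,c)$ as the convex combination $(1-s)c+s\bigl(c+\theta_{0}A(t,c)\bigr)$ with $s=\chi(c)\theta/\theta_{0}\in[0,1]$ and invoking convexity of $C$ is exactly what is needed, and your Lipschitz verification for $A^{*}=\chi A$ (splitting into the cases where one or both points lie outside $B_{R_{*}+2}$, and using the ($ii'$) bound to control the product term) is sound. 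Two cosmetic remarks only: the uniform bound on $\|A\|$ over $C\cap B_{R_{*}+2}$ should read $M(3+R_{*})$ rather than $M(2+R_{*})$, and it is worth stating explicitly that the resulting constant in the stability estimate is $L=L_{R_{*}}$ with $R_{*}=(R+MT)e^{MT}$, which is what justifies the claimed dependence $L=L(M,R,T)$.
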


\section{A Kolmogorov equation for measures}

Here, suppose that $X$ is a separable Banach space with norm $\norma{\cdot}$, and let $\B(X)$ be its Borel $\sigma$-algebra. We denote by $C_{b}(X)$, the Banach space of all uniformly continuous and bounded functions $f:X\rightarrow \R$, endowed with the supremum  norm $\norma{\cdot}_{0}$. Furthermore, we consider a strongly continuous semigroup of linear operators $\{P_{t}\}_{t\geq 0}\subset \mathcal B(C_{b}(X))$ with the Markovian property, that is, there exists a family $\{\pi_{t}(x,\cdot): t\geq 0, x\in X\}$ of probability measures on $X$ such that

\begin{itemize}
\item the map $\R_{+}\times X\rightarrow [0,1]$, $(t,x)\mapsto \pi_{t}(x,\Gamma)$ is measurable, for any Borel set $\B(X)$;
\item $\pi_{t+s}(x,\Gamma)=\int_{X}\pi_{s}(y,\Gamma)\pi_{t}(x,\de y)$ for all $t,s\geq 0$, $x\in X$, and $\Gamma\in \B(X)$;
\item for any $x\in X$, $\pi_{0}(x,\cdot)=\delta_{x}(\cdot)$, the probability measure concentrated in $x$;
\item $P_{t}\phi(x)=\int_{X}\phi(y)\pi_{t}(x,\de y)$, for any $t\geq 0$, $\phi\in C_{b}(X)$, $x\in X$;
\item for any $\phi\in C_{b}(X)$, $x\in X$, the function $\R_{+}\rightarrow \R$, $t\mapsto P_{t}\phi(x)$ is continuous.
\end{itemize}

Then we define the linear operator $(\mathcal L,D(\mathcal L))$ where

\begin{align}\label{gen1}
\begin{aligned}
&D(\mathcal L)\coloneqq \left\{ u\in X: \lim_{t\downarrow 0}\frac{P_{t}u-u}{t}\hskip 0,1cm \text{exists}\right\},\\
&\mathcal Lu\coloneqq \lim_{t\downarrow 0}\frac{P_{t}u-u}{t}\hskip 0,1cm u\in D(\mathcal L).
\end{aligned}
\end{align}

\begin{definition}\label{defweak}
Let $\overline{\mu}\in \M_{b}(X)$. We say that a family of measures $\{\mu_{t}\}_{t\geq 0}$ is a solution of the measure equation
\begin{align}\label{measeq}
\begin{cases}
&\frac{\de}{\de t}\int_{X}\phi \mu_{t}(\de x)=\int_{X}\mathcal L\phi(x)\mu_{t}(\de x), \hskip 0,1cm t\geq 0, \phi\in D(\mathcal L),\\
&\mu_{0}=\overline{\mu}, \hskip 0,5cm \mu\in \M_{b}(X),
\end{cases}
\end{align}
if the following conditions are fulfilled:
\begin{itemize}
\item the total variation of the measure $\mu_{t}$ satisfies
\begin{align*}
\int_{0}^{T}\norma{\mu_{t}}_{TV}\de t<+\infty, \hskip 0,3cm T>0;
\end{align*}
\item for any $\phi\in D(\mathcal L)$, the real valued function $t\mapsto \int_{X}\phi(x)\mu_{t}(\de x)$ is absolutely continuous, and  for any $t\geq 0$ it holds
\begin{align*}
\int_{X}\phi(x)\mu_{t}(\de x)-\int_{X}\phi(x)\overline{\mu}(\de x)=\int_{0}^{t}\int_{X}\mathcal L\phi(x)\mu_{s}(\de x)\de s.
\end{align*}
\end{itemize}
\end{definition}
Let us now recall an important result about the well-posedness of \eqref{measeq} when $X$ is a Hilbert space, \cite[Theorem 1.2]{MR2487956}.

\begin{theorem}\label{wellpod}
Let $\{P_{t}\}_{t\geq 0}$ be a strongly continuous Markov semigroup and let $(\mathcal L,D(\mathcal L))$ be its infinitesimal generator, defined as in \eqref{gen1}. Then the formula
\begin{align*}
\nm{\phi, P_{t}^{\ast}F}_{\mathcal B(C_{b}(X),(C_{b}(X))^{\ast})}\coloneqq \nm{F, P_{t}\phi}_{\mathcal B(C_{b}(X),(C_{b}(X))^{\ast})}
\end{align*}
defines a semigroup $(P_{t}^{\ast})_{t\geq 0}$ of linear and continuous operators on $(C_{b}(X))^{\ast}$ that maps $\M_{b}(X)$ into $\M_{b}(X)$. Moreover, for any $\overline{\mu}\in \M_{b}(X)$, $\phi\in C_{b}(X)$ the map $t\mapsto \int_{X}\phi(x)P_{t}^{\ast}\mu(\de x)$ is continuous, and if $\phi \in D(\mathcal L)$ then it is also differentiable with continuous differential 

\begin{align}\label{newdiffeq}
\frac{\de}{\de t}\int_{X}\phi(x)P_{t}^{\ast}\mu(\de x)= \int_{X}\mathcal L\phi(x) P_{t}^{\ast} \mu(\de x).
\end{align}
Finally, for any $\mu\in \M_{b}(X)$ there exists a unique solution of the measure equation \eqref{measeq} given by $\{P_{t}^{\ast}\mu\}_{t\geq 0}$.
\end{theorem}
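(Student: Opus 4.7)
The plan is to prove Theorem \ref{wellpod} in four stages: (i) construct $P_t^{\ast}$ via transition kernels and verify it preserves $\M_b(X)$; (ii) establish the semigroup property and continuity in $t$; (iii) obtain existence by exhibiting $\{P_t^{\ast}\bar{\mu}\}_{t\geq 0}$ as a solution of \eqref{measeq}; (iv) prove uniqueness by a backward-evolution duality argument. The guiding idea is that \eqref{measeq} is the formal adjoint of the Kolmogorov equation $\partial_t u = \mathcal L u$ generated by $\{P_t\}_{t\geq 0}$, so the natural candidate solution is precisely the dual semigroup acting on $\bar{\mu}$.

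For the construction, given $\mu\in\M_b(X)$, I would define $P_t^{\ast}\mu$ as the measure $\nu_t(\Gamma)\coloneqq\int_X\pi_t(x,\Gamma)\,\de\mu(x)$ for $\Gamma\in\B(X)$. The joint measurability of $(t,x)\mapsto\pi_t(x,\Gamma)$ combined with Fubini yields $\nu_t\in\M_b(X)$ with $\norma{\nu_t}_{TV}\leq\norma{\mu}_{TV}$, and the duality identity $\int\phi\,\de\nu_t=\int P_t\phi\,\de\mu$ holds first for indicator functions and then by standard approximation for every $\phi\in C_b(X)$; this shows $P_t^{\ast}$ coincides with the abstract adjoint on $(C_b(X))^{\ast}$. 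The semigroup property $P_{t+s}^{\ast}=P_t^{\ast}P_s^{\ast}$ is then a direct translation of the Chapman–Kolmogorov identity for $\pi_t$. Continuity of $t\mapsto\int\phi\,\de P_t^{\ast}\mu=\int P_t\phi\,\de\mu$ for $\phi\in C_b(X)$ follows from the pointwise continuity $t\mapsto P_t\phi(x)$ together with dominated convergence, using the uniform bound $\norma{P_t\phi}_0\leq\norma{\phi}_0$. If moreover $\phi\in D(\mathcal L)$, the classical semigroup identity $\frac{\de}{\de t}P_t\phi=P_t\mathcal L\phi=\mathcal L P_t\phi$ together with the duality formula yields the differentiability \eqref{newdiffeq}. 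Integrating \eqref{newdiffeq} in time then shows that $\mu_t=P_t^{\ast}\bar{\mu}$ satisfies \eqref{measeq}, and the integrability condition $\int_0^T\norma{\mu_t}_{TV}\,\de t<\infty$ is immediate from the contractivity bound.

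The main obstacle will be \textbf{uniqueness}, for which I would employ a backward-in-time test-function argument. Suppose $\mu_t^1,\mu_t^2$ both solve \eqref{measeq} with common initial datum $\bar{\mu}$, and set $\nu_t\coloneqq\mu_t^1-\mu_t^2$, so that $\nu_0=0$ and $\frac{\de}{\de t}\int\phi\,\de\nu_t=\int\mathcal L\phi\,\de\nu_t$ for every $\phi\in D(\mathcal L)$. Fix $T>0$ and $\phi\in D(\mathcal L)$, and introduce $g(t)\coloneqq\int_X P_{T-t}\phi\,\de\nu_t$ on $[0,T]$. The formal computation $g'(t)=-\int_X\mathcal L P_{T-t}\phi\,\de\nu_t+\int_X\mathcal L P_{T-t}\phi\,\de\nu_t=0$ relies on three non-trivial facts: that $P_s$ preserves $D(\mathcal L)$, that the map $s\mapsto\mathcal L P_s\phi$ is sufficiently regular to apply the product rule together with the equation satisfied by $\nu_t$, and that the integrability $\int_0^T\norma{\nu_t}_{TV}\,\de t<\infty$ justifies the interchange of differentiation and integration. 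Granting this, $g(T)=g(0)=\int P_T\phi\,\de\nu_0=0$ gives $\int\phi\,\de\nu_T=0$ for every $\phi\in D(\mathcal L)$. The truly delicate step is then concluding $\nu_T=0$: since $D(\mathcal L)$ is generally not norm-dense in $C_b(X)$, one must invoke the mixed (bounded–pointwise) topology underlying the definition of the strongly continuous Markov semigroup — in which $\M_b(X)$ is identified with the corresponding continuous dual and $D(\mathcal L)$ is a separating family — to upgrade $\int\phi\,\de\nu_T=0$ on $D(\mathcal L)$ to $\nu_T\equiv 0$. Arbitrariness of $T\in[0,\infty)$ then completes the proof.
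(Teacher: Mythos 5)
The paper does not actually prove Theorem \ref{wellpod}: its ``proof'' is a one-line deferral to \cite[Theorem 1.2]{MR2487956} plus the remark that replacing a Hilbert space by a separable Banach space changes nothing. Your proposal therefore cannot be matched against the paper's argument line by line; what you have done is reconstruct, correctly in outline, the standard argument behind that citation. The construction of $P_t^{\ast}$ through the kernels $\pi_t(x,\cdot)$, the total-variation contraction, Chapman--Kolmogorov for the semigroup law, dominated convergence for the continuity of $t\mapsto\int P_t\phi\,\de\mu$ (correctly using only the pointwise continuity of $t\mapsto P_t\phi(x)$ that the hypotheses actually provide), and the identity $\frac{\de}{\de t}P_t\phi=P_t\mathcal L\phi$ for the differentiability \eqref{newdiffeq} and the existence part are all sound and are exactly what the cited reference does.

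Two points in your uniqueness argument deserve more than a flag. First, the product-rule computation for $g(t)=\int_X P_{T-t}\phi\,\de\nu_t$ requires passing to the limit in $\int_X h^{-1}(P_{T-t-h}\phi-P_{T-t}\phi)\,\de\nu_{t+h}$, which needs local boundedness of $\norma{\nu_t}_{TV}$; Definition \ref{defweak} only guarantees $\int_0^T\norma{\nu_t}_{TV}\,\de t<+\infty$, so one should either work with the integrated form of the equation or first upgrade the integrability to a local sup bound. Second, and more substantively, the passage from ``$\int\phi\,\de\nu_T=0$ for all $\phi\in D(\mathcal L)$'' to ``$\nu_T=0$'' is precisely where the meaning of ``strongly continuous Markov semigroup'' matters: with the generator defined as in \eqref{gen1}, $D(\mathcal L)$ need not be sup-norm dense in $C_b(X)$, and the separating property has to come from bp-density (e.g.\ $\lambda R(\lambda,\mathcal L)f\to f$ boundedly pointwise), which is the content one is really importing from \cite{MR2487956}. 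A route that sidesteps the separation question altogether, and is closer to how uniqueness is usually handled in that reference, is to Laplace-transform the equation satisfied by $\nu_t$, obtaining $\int_X(\lambda\phi-\mathcal L\phi)\,\de\hat\nu_\lambda=0$ for every $\phi\in D(\mathcal L)$ and $\lambda>0$, and then to invoke the range identity $(\lambda I-\mathcal L)(D(\mathcal L))=C_b(X)$ to conclude $\hat\nu_\lambda=0$ and hence $\nu_t\equiv0$. Your sketch is correct provided the separating property is actually established rather than asserted; as written it is no less complete than the paper's own treatment.
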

\begin{proof}
The proof of this theorem can be performed by using the same argument as the one in \cite[Theorem 1.2]{MR2487956}. The only difference is that we consider a separable Banach space rather than a Hilbert space.
\end{proof}

\section{Aldous tightness criterion}\label{Aldoscrit1}
Let $E$ be a Polish space, and for each $m\in \N$, we denote by $\mathcal{D}_{E}^{m}=\mathcal{D}_{E}[0,m]$ the Skorokhod space. That is, the space of all càdlag trajectories defined on $[0,m]$ with values in $E$. In the same way, we denote by $\mathcal{D}_{E}^{\infty}$ the Skorokhod space $\mathcal{D}_{E}[0,+\infty)$. Let $X_{n}$ be random variables of $\mathcal{D}_{E}^{\infty}$.

\begin{theorem}[{\cite[Theorem 16.10]{billi1999}}]
Suppose that 

\begin{enumerate}
    \item \begin{align*}
\lim_{a\rightarrow +\infty}\limsup_{n}\cP(\norma{X_{n}}_{\mathcal{D}_{E}^{m}}\geq a)=0,    
\end{align*}
holds for all $m\in\N$.

\item For each $\varepsilon>0,\eta>0,m$ there exists $\delta_{0}>0$ and $N_{0}\in \N$ such that, if $\delta\leq \delta_{0}$, and $N\geq N_{0}$, and if $\tau$  is a discrete $X_{n}$-stopping time satisfying $\tau \leq m$, then
\begin{align*}
 \cP\left(\vert X_{n}(\tau+\delta)-X_{n}(\tau)\vert \geq \varepsilon\right)\leq \eta.
\end{align*}
Then $X_{n}$ is tight.
\end{enumerate}

\end{theorem}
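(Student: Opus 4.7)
The plan is to prove tightness via Prohorov's theorem together with the classical Arzelà--Ascoli characterization of compact subsets of the Skorokhod space. First, I would reduce to a finite horizon: the topology of $\mathcal{D}_{E}^{\infty}$ is generated by the continuous restriction maps $\mathcal{D}_{E}^{\infty}\to \mathcal{D}_{E}^{m}$, so it suffices to prove that for every fixed $m\in\N$ the laws of the restricted $X_{n}$ form a tight family on $\mathcal{D}_{E}^{m}$. On $\mathcal{D}_{E}^{m}$ I would then invoke \cite[Theorem 12.3]{billi1999}: a set $A\subset \mathcal{D}_{E}^{m}$ is relatively compact iff (a) for each $t$ the slice $\{f(t):f\in A\}$ is relatively compact in $E$, and (b) the càdlàg modulus $w'_{f}(\delta)$ tends to zero uniformly on $A$ as $\delta\downarrow 0$. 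Combined with Prohorov, tightness of $\{X_{n}\}$ reduces to
\begin{enumerate}[(i)]
\item for every $\eta>0$ there exists a compact $K\subset E$ with $\cP(X_{n}(t)\in K\text{ for all }t\in[0,m])\geq 1-\eta$ uniformly in large $n$;
\item for every $\varepsilon,\eta>0$ there is $\delta>0$ with $\cP(w'_{X_{n}}(\delta)\geq \varepsilon)\leq \eta$ for all large $n$.
\end{enumerate}

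Condition (i) follows from Hypothesis 1, once one uses that closed norm-balls of $E$ are relatively compact in the cases of interest in the paper ($E=\R^{d}$ and $E=\P(U)$ with $U$ compact). The substance of the argument is deriving (ii) from Hypothesis 2 via the Aldous stopping-time construction. For fixed $\varepsilon,\eta,m$ and small $\delta>0$, I would inductively define a chain of discrete $X_{n}$-stopping times $\tau_{k}^{n}$ quantized on the $\delta$-grid that successively record the first instant at which the process has moved at least $\varepsilon/3$ away from its value at the previous stopping time. Hypothesis 1 bounds $X_{n}$ uniformly with high probability, which in turn forces the effective number of such stopping times on $[0,m]$ to be bounded by a deterministic constant $K_{0}=K_{0}(\varepsilon,m)$, up to an exceptional event of probability less than $\eta/2$. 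On the event $\{w'_{X_{n}}(\delta)\geq \varepsilon\}$, a standard geometric lemma for the modulus $w'$ implies that at least one pair of consecutive stopping times must lie within $\delta$ of each other, so that $d_{E}(X_{n}(\tau_{k}^{n}+\delta'),X_{n}(\tau_{k}^{n}))\geq \varepsilon/3$ for some $\delta'\leq\delta$ and some index $k\leq K_{0}$. Applying Hypothesis 2 at each $\tau_{k}^{n}$ and taking a union bound over $k\leq K_{0}$ then yields (ii).

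The principal obstacle is the translation between the deterministic modulus $w'$ and the probabilistic stopping-time hypothesis. One must check that the $\tau_{k}^{n}$ genuinely satisfy the assumptions of Condition 2 (they must be discrete-valued, $\sigma(X_{n}(s):s\in [0,m])$-adapted and truncated at $m$), which is the role of the $\delta$-grid quantization; that a $\delta$-close pair of consecutive stopping times indeed witnesses the failure of the $w'(\delta)$-bound, which requires the geometric characterization of $w'$; and that the number of stopping times remains uniformly controlled, which forces Hypothesis 1 to be invoked at this step as well as in (i). Once (i) and (ii) are in place, Prohorov's theorem delivers tightness on each $\mathcal{D}_{E}^{m}$, and hence tightness of $\{X_{n}\}$ on $\mathcal{D}_{E}^{\infty}$.
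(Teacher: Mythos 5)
The paper does not actually prove this statement: it is quoted verbatim from Billingsley \cite[Theorem 16.10]{billi1999} in Appendix \ref{Aldoscrit1} and used as a black box, so there is no internal proof to compare yours against. Your architecture is the standard one for Aldous's criterion (reduction to $\mathcal{D}_{E}^{m}$, Prohorov plus the $w'$-characterization of compactness in the Skorokhod space, a chain of discrete stopping times recording successive $\varepsilon/3$-oscillations), and your remarks about quantizing the stopping times to a $\delta$-grid and about needing closed balls of $E$ to be relatively compact for the compact-containment step are both correct and worth making explicit, since for a general Polish space Hypothesis 1 as stated only gives stochastic boundedness, not compact containment.

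There is, however, one genuine gap: the claim that Hypothesis 1 ``bounds $X_{n}$ uniformly with high probability, which in turn forces the effective number of such stopping times on $[0,m]$ to be bounded by a deterministic constant $K_{0}(\varepsilon,m)$'' is false. A path bounded by $a$ can oscillate by $\varepsilon/3$ arbitrarily many times (think of $t\mapsto\sin(kt)$ for large $k$), so boundedness gives no control whatsoever on the number of oscillation stopping times. In the correct argument this count is controlled by Hypothesis 2 itself: one first shows that consecutive stopping times satisfy $\cP(\tau_{k+1}-\tau_{k}\leq\delta,\ \tau_{k+1}\leq m)\leq\eta'$ uniformly in $k$ and $n$, and then deduces that $\cP(\tau_{K}\leq m)$ is small for $K$ of order $m/\delta$, since $\tau_{K}\leq m$ forces some gap to be at most $m/K$. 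Making this non-circular (the admissible $\delta$ depends on $\eta'$, while the number of terms in the union bound depends on $\delta$) is precisely the delicate part of Billingsley's proof and cannot be bypassed by appealing to Hypothesis 1. The rest of your sketch (the geometric lemma relating separation of the $\tau_{k}$ to $w'_{X_{n}}(\delta)$, and the final union bound) is sound once this counting step is repaired.
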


\section*{Acknowledgments}
The authors would like to thank Alessandro Baldi and Marco Morandotti for inspiring discussions that greatly influenced this work. 
The author G.D. acknowledges financial support under the MIUR-PRIN 2022 project “Non-Markovian dynamics and non-local equations”,  No. 202277N5H9 - CUP: D53D23005670006.
G.D. participates also in the INdAM - GNAMPA Project, \textit{Deterministic Control
of Stochastic Dynamics} CUP E53C23001670001.
AMH has been supported by project PRIN 2022 ``understanding the LEarning process of QUantum Neural networks (LeQun)'', proposal code 2022WHZ5XH -- CUP J53D23003890006.

\bibliographystyle{plain}

 \end{document}